\def\R{\mathbb{R}}
\def\N{\mathbb{N}}
\def\T{\mathbb{T}}
\def\supp{\operatorname{supp}}
\def\dive{\operatorname{div}}
\newcommand{\norm}[1]{\left\lVert#1\right\rVert}
\newcommand{\abs}[1]{\left\lvert#1\right\rvert}
\newcommand{\defeq}{\mathrel{:\mkern-0.25mu=}}
\newcommand{\eqdef}{\mathrel{=\mkern-0.25mu:}}
\newtheorem{thm}{Theorem}[section]
\newtheorem{cor}[thm]{Corollary}
\newtheorem{prop}[thm]{Proposition}
\newtheorem{lem}[thm]{Lemma}
\theoremstyle{definition}
\newtheorem{defin}[thm]{Definition}
\newtheorem{rem}[thm]{Remark}
\numberwithin{equation}{section}
\begin{document}

\title{Magnetic helicity and subsolutions in ideal MHD}


\author{Daniel Faraco}
\address{Departamento de Matem\'{a}ticas \\ Universidad Aut\'{o}noma de Madrid, E-28049 Madrid, Spain; ICMAT CSIC-UAM-UC3M-UCM, E-28049 Madrid, Spain}
\email{daniel.faraco@uam.es}
\thanks{D.F. was partially supported by ICMAT Severo Ochoa projects SEV-2011-0087 and SEV-2015-556, the grants MTM2014-57769-P-1, MTM2014-57769-P-3 and MTM2017-85934-P (Spain) and the ERC grant 307179-GFTIPFD. S.L. was supported by the ERC grant 307179-GFTIPFD}

\author{Sauli Lindberg}
\address{Departamento de Matem\'{a}ticas \\ Universidad Aut\'{o}noma de Madrid, E-28049 Madrid, Spain; ICMAT CSIC-UAM-UC3M-UCM, E-28049 Madrid, Spain}
\email{sauli.lindberg@uam.es}
\thanks{}

\subjclass[2010]{35Q35, 76W05, 76B03}

\keywords{Ideal magnetohydrodynamics, Tartar framework, magnetic helicity, mean-square magnetic potential}
\date{}

\begin{abstract}
We show that ideal 2D MHD does not possess weak solutions (or even subsolutions) with compact support in time and non-trivial magnetic field. We also show that the $\Lambda$-convex hull of ideal MHD has empty interior in both 2D and 3D; this is seen by finding suitable $\Lambda$-convex functions. As a consequence we show that mean-square magnetic potential is conserved in 2D by subsolutions and weak limits of solutions in the physically natural energy space $L^\infty_t L^2_x$, and in 3D we show the conservation of magnetic helicity by $L^3$-integrable subsolutions and weak limits of solutions. However, in 3D the $\Lambda$-convex hull is shown to be large enough that nontrivial smooth, compactly supported strict subsolutions exist.
\end{abstract}
\maketitle

\section{Introduction}
Magnetohydrodynamics (MHD in short) couples Maxwell's equations with hydrodynamics to study the macroscopic behaviour of electrically conducting fluids such as plasmas and liquid metals (see \cite{Davidson} and \cite{ST}). On the $n$-dimensional torus $\T^n = [0,1]^n$ the Cauchy problem for \emph{ideal (nonviscous) MHD} consists of the equations
\begin{align}
& \partial_t u + \dive(u \otimes u - b \otimes b) + \nabla \Pi = 0, \label{MHD} \\
& \partial_t b + \dive(b \otimes u - u \otimes b) = 0, \label{MHD2} \\
& \dive u = \dive b = 0, \label{MHD3} \\
& u(\cdot,0) = u_0, \; b(\cdot,0) = b_0, \label{MHD4} \\
& \int_{\T^n} u(x,t) \, dx = \int_{\T^n} b(x,t) \, dx = 0 \qquad \text{for almost every } t \in [0,T[, \label{MHD5}
  \end{align}
where $T > 0$, $u \in L^2_{loc}(\T^n \times [0,T[; \R^n)$ is the velocity field, $b \in L^2_{loc}(\T^n \times [0,T[; \R^n)$ is the magnetic field, $\Pi \in L^1_{loc}(\T^n \times [0,T[)$ is the total pressure and the initial datas $u_0,b_0 \in L^2(\T^n;\R^n)$ are divergence-free. Equations \eqref{MHD}--\eqref{MHD4} are understood in the sense of distributions, that is,
\[\int_0^T \int_{\T^n} \left[ u \cdot \partial_t \varphi + (u \otimes u - b \otimes b) : \operatorname{D} \varphi + \Pi \dive \varphi \right] + \int_{\T^n} u_0 \cdot \varphi(\cdot,0) = 0,\]
\[\int_0^T \int_{\T^n} [b \cdot \partial_t \varphi + (b \otimes u - u \otimes b) : \operatorname{D} \varphi] + \int_{\T^n} b_0 \cdot \varphi(\cdot,0) = 0,\]
\[\int_0^T \int_{\T^n} (u \cdot \nabla) \varphi = \int_0^T \int_{\T^n} (b \cdot \nabla) \varphi = 0\]
for all $\varphi \in C_c^\infty(\T^n \times [0,T[;\R^n)$. An analogous definition, without condition \eqref{MHD5}, is given in $\R^n$ with $u,b \in L^2_{loc}(\R^n \times [0,T[; \R^n)$ and test functions $\varphi \in C_c^\infty(\R^n \times [0,T[;\R^n)$. The Euler equations are a special case of ideal MHD where $b \equiv 0$.

Starting from the pioneering work of De Lellis and Sz\'{e}kelyhidi on the Euler equations in \cite{DLS09}, the Tartar framework has been used to show the existence of solutions with compact support in time for many equations of hydrodynamics. Such patological weak solutions were already known to exist in the case of Euler equations (see \cite{Scheffer} and \cite{Shnirelman}) but the method of De Lellis and Sz\'{e}kelyhidi is very robust and many equations in hydrodynamics are amenable to it and its ramifications, see \cite{BLFNL}, \cite{BSV}, \cite{BV}, \cite{CCF}, \cite{Chiodaroli}, \cite{CDLK}, \cite{CFK}, \cite{CM}, \cite{CS}, \cite{CFG}, \cite{IV}, \cite{KY}, \cite{LLX}, \cite{Shvydkoy}, \cite{Szekelyhidi} and \cite{TZ}. The regularity of solutions is related to the celebrated Onsager conjecture (see \cite{Buckmaster}, \cite{BDLIS15}, \cite{BDLS13}, \cite{BDLS16}, \cite{BDLSV}, \cite{CET}, \cite{Dan}, \cite{DS}, \cite{DLS09}, \cite{DLS10}, \cite{DLS13}, \cite{DLS14}, \cite{DLS16}, \cite{Eyi}, \cite{Ise13}, \cite{Ise16}, \cite{Ise17}, \cite{IO16}, \cite{Onsager} and \cite{Shvydkoy Onsager}).

In this work we instead use the Tartar framework to show conservation of integral quantities under weak assumptions.  In the Tartar framework the ideal MHD equations \eqref{MHD}--\eqref{MHD3} are decoupled into a set of linear partial differential equations and a pointwise constraint: the equations are
\begin{align}
& \dive u = \dive b = 0, \label{Linearized MHD1} \\
& \partial_t u + \dive S = 0, \label{Linearized MHD2} \\
& \partial_t b + \dive A = 0, \label{Linearized MHD3}
  \end{align}
where $S$ takes values in the set $\mathcal{S}^{n \times n}$ of symmetric matrices and $A$ takes values in the set $\mathcal{A}^{n \times n}$ of antisymmetric matrices and the constraint set is
\begin{align*}
K &\defeq \{(u,b,S,A) \in \R^n \times \R^n \times \mathcal{S}^{n \times n} \times \mathcal{A}^{n \times n} \colon S = u \otimes u - b \otimes b + \Pi I, \; \Pi \in \R, \\
& A = b \otimes u - u \otimes b\}.
\end{align*}
The wave cone $\Lambda$ is, loosely speaking, the set of directions in which 1-dimensional oscillating waves satisfy equations \eqref{MHD5}--\eqref{Linearized MHD3}, and the $\Lambda$-convex hull $K^\Lambda$ consists of those points that cannot be separated from $K$ by functions that are convex in the directions of $\Lambda$ (see \textsection \ref{The wave cone and the lamination convex hull} for the exact definitions). A solution of \eqref{MHD5}--\eqref{Linearized MHD3} that takes values in $K^\Lambda$ at a.e. $(x,t) \in \T^n \times ]0,T[$ is called a \emph{subsolution}.

It is a classical fact that smooth solutions of \eqref{MHD}--\eqref{MHD5} conserve magnetic helicity in 3D and mean-square magnetic potential in 2D. We show that the conservation is in fact very robust and extends even to subsolutions and weak limits of solutions. This phenomenon is seen here as a reflection of the shape of the $\Lambda$-convex hull of the constraint set $K$. As a corollary, in 2D, there exist no weak solutions of the MHD equations \eqref{MHD}--\eqref{MHD5} that have compact support in time and non-trivial magnetic field $b$.

In 2D we consider weak solutions of \eqref{MHD}--\eqref{MHD5} where $u$ and $b$ belong to the natural energy space $L^\infty_t L^2_x(\T^2 \times [0,T[;\R^2)$. By redefining $u$ and $b$ in a set of times of measure zero we may then assume that $u,b \in C_w([0,T[;L^2(\T^2;\R^2))$ (where we denote $v \in C_w([0,T[;L^2(\T^2;\R^2))$ when $v \in L^\infty_t L^2_x(\T^2 \times [0,T[;\R^2)$ and $t_j \to t$ implies $v(\cdot,t_j) \rightharpoonup v(\cdot,t)$ in $L^2(\T^2;\R^2)$ for every $t \in [0,T[$); this can be seen by a modification of \cite[Lemmas 2.2 and 2.4]{Gal}. In our result $\Psi \in C_w([0,T[; W^{1,2}(\T^2))$ is the unique stream function of $b$ that satisfies $\int_{\T^2} \Psi(x,t) \, dx = 0$ for every $t \in [0,T[$ (see Lemma \ref{Stream function lemma}). The result is new also for weak solutions of \eqref{MHD}--\eqref{MHD3}.

\begin{thm} \label{Mean-square magnetic potential preservation theorem}
If $u,b \in C_w([0,T[;L^2(\T^2;\R^2))$, $S \in L^1_{loc}(\T^2 \times [0,T[;\mathcal{S}^{2 \times 2})$ and $A \in L^1_{loc}(\T^2 \times [0,T[;\mathcal{A}^{2 \times 2})$ form a solution of \eqref{MHD4}--\eqref{Linearized MHD3} such that $(u,b,S,A)(x,t) \in K^\Lambda$ a.e. $(x,t) \in \T^2 \times ]0,T[$, then the mean-square magnetic potential $\int_{\T^2} \abs{\Psi(x,t)}^2 dx$ is constant in $t$.
\end{thm}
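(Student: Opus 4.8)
The plan is to split the statement into an algebraic fact about the shape of $K^\Lambda$ in two dimensions and an analytic fact about weak solutions of the induction equation. First I would observe that the two functions $(u,b,S,A) \mapsto \pm(A_{12} - b_1 u_2 + u_1 b_2)$ are $\Lambda$-affine: their common quadratic part is, up to sign, $(\bar u,\bar b) \mapsto \det(\bar u \mid \bar b)$, and along any element of the wave cone with spatial frequency $\xi$ the amplitudes satisfy $\bar u \cdot \xi = \bar b \cdot \xi = 0$, hence $\bar u, \bar b \in \R\,\xi^\perp$, so this determinant vanishes. Since both functions vanish on $K$, they vanish on $K^\Lambda$; thus the subsolution satisfies $A = b \otimes u - u \otimes b$ a.e.\ on $\T^2 \times \,]0,T[$. (This is the two-dimensional case of the $\Lambda$-convexity computation behind the empty-interior statement, and if it has already been isolated in \textsection\ref{The wave cone and the lamination convex hull} I would simply cite it there.)

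Consequently $(u,b)$ is a genuine weak solution of the induction equation $\partial_t b + \dive(b \otimes u - u \otimes b) = 0$. Writing $b = \nabla^\perp \Psi$ with the normalized stream function from Lemma~\ref{Stream function lemma}, and using that a distribution on $\T^2$ with vanishing gradient is a function of $t$ alone, this equation is equivalent to $\partial_t \Psi + u \cdot \nabla \Psi = c(t)$; taking spatial averages and invoking $\int_{\T^2}\Psi = 0$ together with $\dive u = 0$ forces $c \equiv 0$, so
\[\partial_t \Psi + \dive(u\,\Psi) = 0 \qquad \text{in } \D'(\T^2 \times \,]0,T[\,),\]
where $u, \nabla\Psi \in L^\infty_t L^2_x$ and, by the two-dimensional Sobolev embedding, $\Psi \in L^\infty_t L^p_x$ for every $p < \infty$.

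The remaining point — that $t \mapsto \int_{\T^2}\abs{\Psi(x,t)}^2\,dx$ is constant — is where the real work lies. I would mollify in space: $\Psi_\epsilon \defeq \Psi * \rho_\epsilon$ satisfies $\partial_t \Psi_\epsilon + \dive((u\Psi)*\rho_\epsilon) = 0$, and since $\Psi_\epsilon \in C([0,T[\,;C^\infty(\T^2))$ (from weak continuity of $\Psi$ plus mollification) with $\partial_t \Psi_\epsilon \in L^\infty_t C^\infty_x$, the function $t \mapsto \tfrac12\int_{\T^2}\Psi_\epsilon^2$ is Lipschitz, with a.e.\ derivative $\int_{\T^2}\nabla\Psi_\epsilon \cdot ((u\Psi)*\rho_\epsilon)$. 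Subtracting the exactly vanishing term $\int_{\T^2}\nabla\Psi_\epsilon\cdot(u\,\Psi_\epsilon) = \tfrac12\int_{\T^2} u\cdot\nabla(\Psi_\epsilon^2) = 0$ leaves
\[\tfrac12\,\tfrac{d}{dt}\int_{\T^2}\Psi_\epsilon^2\,dx = \int_{\T^2}\nabla\Psi_\epsilon\cdot r_\epsilon\,dx, \qquad r_\epsilon \defeq (u\Psi)*\rho_\epsilon - u\,\Psi_\epsilon .\]
Integrating over $[t_1,t_2]$, the left-hand side tends to $\tfrac12\bigl(\int\abs{\Psi(t_2)}^2 - \int\abs{\Psi(t_1)}^2\bigr)$ since $\Psi_\epsilon(\cdot,t) \to \Psi(\cdot,t)$ in $L^2(\T^2)$ uniformly in $t$, so the theorem follows once $\int_{t_1}^{t_2}\int_{\T^2}\nabla\Psi_\epsilon\cdot r_\epsilon \to 0$. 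For this I would use the Constantin--E--Titi decomposition of $r_\epsilon$ together with the embedding $W^{1,2}(\T^2) \hookrightarrow L^p(\T^2)$ for all $p<\infty$, which is precisely what tames the borderline product of $u \in L^2$ with $\nabla\Psi \in L^2$ in the plane.

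I expect the commutator estimate of the last paragraph to be the main obstacle: it is exactly the statement that mean-square magnetic potential suffers no anomalous dissipation, it sits at the critical integrability for planar MHD, and it must exploit the Sobolev gain of the magnetic potential in a nontrivial way. The $\Lambda$-convexity input is, by comparison, a one-line algebraic computation, and the reduction to the induction equation together with the stream-function manipulations is routine bookkeeping. Two-dimensionality enters essentially in both halves of the argument — first because $\bar u$ and $\bar b$ are forced to be parallel on the wave cone, and then through the embedding $W^{1,2} \hookrightarrow \bigcap_{p<\infty}L^p$.
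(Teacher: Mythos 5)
Your reduction to the induction equation coincides with the paper's: the quantity $a_{12}-b\times u$ is $\Lambda$-affine because its quadratic part $-\det(\bar u\mid\bar b)$ vanishes on $\Lambda$ (in the plane $\bar u,\bar b\perp\xi$ forces them to be parallel), and a $\Lambda$-affine function vanishing on $K$ vanishes on $K^\Lambda$; the paper packages exactly this as the $\Lambda$-convex function $4\abs{a_{12}-b\times u}^2$ in Proposition \ref{Empty interior proposition} and Corollary \ref{Subsolutions are solutions for b in 2D}, and your passage to $\partial_t\Psi = u\cdot\nabla\Psi = J(\Psi,\Phi)$ is Lemma \ref{Time evolution lemma for Psi}. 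Where you genuinely diverge is the conservation step. The paper keeps the nonlinearity as the Jacobian $J(\Psi,\Phi)$ of the two stream functions, controls $\int\Psi_\delta\,[J_{(\Psi,\Phi)}]_\delta$ by the Coifman--Lions--Meyer--Semmes/Wente estimate (Lemma \ref{Jacobian estimate}, i.e.\ $\mathcal{H}^1$--$\operatorname{BMO}$ duality), and kills the main term by the antisymmetry $\int\Psi_\delta J_{(\Psi_\delta,\Phi)}=-\int\Phi J_{(\Psi_\delta,\Psi_\delta)}=0$; you instead treat $\partial_t\Psi+\dive(u\Psi)=0$ as a transport equation and propose a renormalization argument. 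That route can be made to close, but not with the ingredient you name: the embedding $W^{1,2}(\T^2)\hookrightarrow\bigcap_{p<\infty}L^p$ by itself produces no smallness, and a naive H\"older bound on $\int\nabla\Psi_\epsilon\cdot r_\epsilon$ gives only $O(1)$ at this critical integrability. What actually closes it is, first, the observation that the piece $(u_\epsilon-u)\Psi_\epsilon$ of $r_\epsilon$ pairs to \emph{exactly} zero with $\nabla\Psi_\epsilon$, since $\int_{\T^2}(u_\epsilon-u)\cdot\nabla(\Psi_\epsilon^2)=0$ by $\dive(u_\epsilon-u)=0$, and second, for the remaining CET terms $R_\epsilon(u,\Psi)-(u-u_\epsilon)(\Psi-\Psi_\epsilon)$, the quantitative interpolation (Ladyzhenskaya) bound
\[
\norm{\Psi(\cdot-y,t)-\Psi(\cdot,t)}_{L^4(\T^2)}\lesssim\abs{y}^{1/2}\norm{\nabla\Psi(\cdot,t)}_{L^2(\T^2)},
\qquad
\norm{\nabla\Psi_\epsilon(\cdot,t)}_{L^4(\T^2)}\lesssim\epsilon^{-1/2}\norm{\nabla\Psi(\cdot,t)}_{L^2(\T^2)},
\]
whose powers of $\epsilon$ cancel exactly and leave the factor $\sup_{\abs{y}\le\epsilon}\norm{u(\cdot-y,t)-u(\cdot,t)}_{L^2}\to0$, concluded by dominated convergence in $t$. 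Since the exponents are borderline, these estimates must be written out; as it stands, the decisive step of your proof is a declaration of intent rather than an argument. The trade-off between the two routes is real: yours avoids Hardy spaces entirely at the cost of a sharp commutator computation, while the paper's exploits the div--curl structure of $b\times u$ and needs no rate of convergence of the mollifications.
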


Theorem \ref{Mean-square magnetic potential preservation theorem} is a reflection of the existence of a suitable $\Lambda$-convex function which shows that if $(u,b,S,A)(x,t) \in K^\Lambda$ a.e., then $A = b \otimes u - u \otimes b$ and so \eqref{MHD2} is satisfied (see \textsection \ref{Emptiness of the interior of the hull in 2D}). Another main ingredient of the proof is the fact that the evolution of $\Psi$ can be described in terms of the Jacobian of a two-dimensional map: $\partial_t \Psi - J_{(\Psi,\Phi)} = 0$, where $\Phi$ is the stream function of $u$ (see Lemma \ref{Time evolution lemma for Psi}). Thus we can use the Hardy space theory of Jacobians: first, if $u$ and $b$ are smooth, we use an integration by parts to compute
\[\partial_t \frac{1}{2} \int_{\T^2} |\Psi(x,t)|^2 dx = \int_{\mathbb{T}^2} \Psi(x,t) J_{(\Psi,\Phi)}(x,t) \, dx = - \int_{\mathbb{T}^2} \Phi(x,t) J_{(\Psi,\Psi)}(x,t) \, dx = 0;\]
in the general case where $u,b \in C_w([0,T[;L^2(\T^2;\R^2))$ we use Sobolev embedding to get $\Psi \in L^\infty_t \operatorname{BMO}_r(\mathbb{T}^2 \times [0,T[)$ and the $\mathcal{H}^1$ regularity theory of Coifman, Lions, Meyer and Semmes from \cite{CLMS} to get $J_{(\Psi,\Phi)} \in L^\infty_t \mathcal{H}^1_z(\T^2 \times [0,T[)$, and Fefferman's classical $\mathcal{H}^1$--$\operatorname{BMO}$ duality result from \cite{FS} is then used to provide the necessary approximation argument. The proof is presented in \textsection \ref{Conservation of the mean-square magnetic potential} where we also show that the mean-square magnetic potential is conserved by weak limits of solutions (see Remark \ref{Remark on weak limits of solutions}). Theorem \ref{Mean-square magnetic potential preservation theorem} implies Corollary \ref{MHD 2D corollary} by using the Poincar\'{e} inequality at every $t \in [0,T[$ to estimate $\int_{\T^2} \abs{b(x,t)}^2 dx = \int_{\T^2} \abs{\nabla \Psi(x,t)}^2 dx \ge C \int_{\T^2} \abs{\Psi(x,t)}^2 dx$.

\begin{cor} \label{MHD 2D corollary}
Suppose that $u,b \in C_w([0,T[;L^2(\T^2;\R^2))$ satisfy \eqref{MHD2}--\eqref{MHD5}. Then either $b \equiv 0$ or there exists $C > 0$ such that $\int_{\T^2} \abs{b(x,t)}^2 dx \ge C$ for every $t \in [0,T[$.
\end{cor}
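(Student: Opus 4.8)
The plan is to obtain the statement from Theorem~\ref{Mean-square magnetic potential preservation theorem} and the Poincar\'{e} inequality on $\T^2$, as already indicated before the statement. First I would record the reductions. By Lemma~\ref{Stream function lemma}, for each $t$ the divergence-free, mean-zero field $b(\cdot,t)$ has a unique stream function $\Psi(\cdot,t)\in W^{1,2}(\T^2)$ with $\int_{\T^2}\Psi(x,t)\,dx=0$, and $\Psi\in C_w([0,T[;W^{1,2}(\T^2))$. Writing $b=\nabla^\perp\Psi$ we have the pointwise identity $\abs{b(x,t)}^2=\abs{\nabla\Psi(x,t)}^2$, so $\int_{\T^2}\abs{b(x,t)}^2\,dx=\int_{\T^2}\abs{\nabla\Psi(x,t)}^2\,dx$ for every $t\in[0,T[$; this is finite since $b\in L^\infty_tL^2_x$. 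Because $\Psi(\cdot,t)$ has zero mean, the Poincar\'{e} inequality on $\T^2$ yields an absolute constant $c>0$, independent of $t$, with
\[\int_{\T^2}\abs{\nabla\Psi(x,t)}^2\,dx \ge c\int_{\T^2}\abs{\Psi(x,t)}^2\,dx\qquad(t\in[0,T[).\]

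Next I would check that $u$ and $b$ fall within the scope of Theorem~\ref{Mean-square magnetic potential preservation theorem}. Set $A\defeq b\otimes u-u\otimes b$; since $u,b\in L^\infty_tL^2_x$ this lies in $L^\infty_tL^1_x\subseteq L^1_{loc}(\T^2\times[0,T[;\mathcal{A}^{2\times2})$, and then \eqref{Linearized MHD3} reads exactly as the assumed equation \eqref{MHD2}, while \eqref{Linearized MHD1} is \eqref{MHD3}. One completes the quadruple by choosing a symmetric $S\in L^1_{loc}$ with $\partial_t u+\dive S=0$ and $(u,b,S,A)(x,t)\in K\subseteq K^\Lambda$ a.e.\ --- e.g.\ $S=u\otimes u-b\otimes b+\Pi I$ whenever a total pressure $\Pi\in L^1_{loc}$ is available. (In fact the proof of Theorem~\ref{Mean-square magnetic potential preservation theorem} uses only \eqref{MHD2}--\eqref{MHD5} together with the weak continuity of $u$ and $b$; the hypothesis $(u,b,S,A)\in K^\Lambda$ enters there solely to force the induction equation \eqref{MHD2}, which is assumed outright here.) In either reading, Theorem~\ref{Mean-square magnetic potential preservation theorem} gives that $m\defeq\int_{\T^2}\abs{\Psi(x,t)}^2\,dx$ does not depend on $t\in[0,T[$.

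Finally I would combine the two ingredients: for every $t\in[0,T[$,
\[\int_{\T^2}\abs{b(x,t)}^2\,dx = \int_{\T^2}\abs{\nabla\Psi(x,t)}^2\,dx \ge c\,m.\]
If $m>0$ this is the assertion with $C\defeq cm$. If $m=0$, then $\int_{\T^2}\abs{\Psi(x,t)}^2\,dx=0$ for \emph{every} $t$, so $\Psi(\cdot,t)=0$ in $W^{1,2}(\T^2)$ and hence $b(\cdot,t)=\nabla^\perp\Psi(\cdot,t)=0$ for every $t$, i.e.\ $b\equiv0$. I do not expect a genuine obstacle here: the entire content of the corollary is carried by Theorem~\ref{Mean-square magnetic potential preservation theorem}, and the only points requiring (routine) care are the verification that $\Psi(\cdot,t)$ is mean-zero at every time, so that Poincar\'{e} applies pointwise in $t$, and the disposal of the degenerate case $m=0$.
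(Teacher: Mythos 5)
Your proof is correct and follows essentially the same route as the paper: the authors prove Theorem~\ref{Mean-square magnetic potential preservation theorem} precisely by first reducing (via Corollary~\ref{Subsolutions are solutions for b in 2D}) to showing conservation for solutions of \eqref{MHD2}--\eqref{MHD5}, which is exactly the reading you fall back on, and then the corollary follows from the Poincar\'{e} inequality applied at each $t$ to the mean-zero stream function, as you do. Your disposal of the degenerate case $m=0$ and the observation that the Poincar\'{e} constant is uniform in $t$ are the only details the paper leaves implicit, and you handle both correctly.
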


Corollary \ref{MHD 2D corollary} rules out convex integration solutions that are in the energy space and compactly supported in time (aside from Euler solutions for which $b \equiv 0$). Note also that by Corollary \ref{MHD 2D corollary} it is not possible to construct weak solutions which dissipate magnetic energy to zero -- in fact, one can easily replace $T$ by $\infty$ in Theorem \ref{Mean-square magnetic potential preservation theorem} and Corollary \ref{MHD 2D corollary}.

In 3D, in contrast to Corollary \ref{MHD 2D corollary}, Bronzi, Lopes Filho and Nussenzveig Lopes used in \cite{BLFNL} convex integration in the Tartar framework to show that there exist infinitely many bounded weak solutions of \eqref{MHD}--\eqref{MHD3} that are of the symmetry reduced form
\begin{equation} \label{Symmetry reduced form}
u(x_1,x_2,x_3,t) = (u_1(x_1,x_2,t),u_2(x_1,x_2,t),0), \; b(x_1,x_2,x_3,t) = (0,0,b_3(x_1,x_2,t)),
\end{equation}
compactly supported in time and with $u,b \not\equiv 0$. The solutions were obtained cleverly via two-dimensional Euler equations with a passive tracer (see \textsection \ref{Discussion of the solutions of Bronzi & al.}). Note, however, that the solutions are independent of the $x_3$ variable. In particular, the construction of \cite{BLFNL} does not give solutions of MHD that are compactly supported in space.

We put the results of \cite{BLFNL} in context. In 3D we may in \eqref{Linearized MHD3} identify the antisymmetric matrix $A = [a_{ij}]_{i,j=1}^3$ with $a \defeq (a_{23},a_{31},a_{12}) \in \R^3$ and write \eqref{Linearized MHD3} as
\begin{equation} \label{Linearized MHD 4}
\partial_t b + \nabla \times a  = 0
\end{equation}
(see \textsection \ref{Emptiness of the interior of the hull in 3D} for details on this formalism). We show in Theorem \ref{Empty interior theorem on hull} that $Q(u,b,S,a) \defeq a \cdot b$ is $\Lambda$-affine and vanishes for every $(u,b,S,a) \in K^\Lambda$. This implies that $K^\Lambda$ has empty interior, which would normally make the use of convex integration very difficult (for a situation where a non-linear pointwise constraint is succesfully understood see \cite{MS99}). Under the restrictions \eqref{Symmetry reduced form} the constraint $a \cdot b = 0$ does not cause trouble precisely because $a$ and $b$ take values in orthogonal subspaces of $\R^3$ (see \textsection \ref{Discussion of the solutions of Bronzi & al.}).

We use the Tartar framework to show the following 3D analogue of Theorem \ref{Magnetic helicity preservation theorem}, where $\Psi$ is a vector potential of $b$, i.e. $\nabla \times \Psi = b$ (see Lemma \ref{Helmholtz-Hodge lemma}).

\begin{thm} \label{Magnetic helicity preservation theorem}
Suppose that $u,b \in L^3(\T^3 \times ]0,T[;\R^3)$, $S \in L^1_{loc}(\T^3 \times ]0,T[;\mathcal{S}^{3 \times 3})$ and $a \in L^{3/2}(\T^3 \times ]0,T[;\R^3)$ form a solution of \eqref{MHD5}--\eqref{Linearized MHD2},\eqref{Linearized MHD 4} that takes values in $K^\Lambda$. Then the magnetic helicity $\int_{\T^3} \Psi(x,t) \cdot b(x,t) dx$ is constant a.e. in $t$.
\end{thm}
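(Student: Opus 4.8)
The plan is to reduce the conservation of magnetic helicity to the single pointwise fact that $a \cdot b = 0$ a.e.\ on $\T^3 \times \,]0,T[$. This is exactly what Theorem~\ref{Empty interior theorem on hull} provides: the function $Q(u,b,S,a) = a \cdot b$ is $\Lambda$-affine and vanishes on $K^\Lambda$, and by hypothesis $(u,b,S,a)(x,t) \in K^\Lambda$ for a.e.\ $(x,t)$. Everything else is the classical magnetic helicity balance for the induction equation $\partial_t b + \curl a = 0$, made rigorous at the stated integrability by a mollification in the space variable. (Only \eqref{Linearized MHD 4}, the divergence and mean-zero constraints on $b$, and membership in $K^\Lambda$ are used; $u$, $S$ and \eqref{Linearized MHD2} play no role beyond being part of the $K^\Lambda$-valued datum.)

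First I would fix the vector potential $\Psi = \curl (-\Delta)^{-1} b$ furnished by Lemma~\ref{Helmholtz-Hodge lemma}; since $\dive b = 0$ and $\int_{\T^3} b(x,t)\, dx = 0$ for a.e.\ $t$, this $\Psi$ is divergence-free, has zero spatial mean and satisfies $\curl \Psi = b$. As the value of the helicity is unchanged under $\Psi \mapsto \Psi + \nabla f$ (again because $\dive b = 0$ and $\int_{\T^3}b\,dx=0$), this is no loss of generality. Because $b \in L^3$, elliptic estimates on $\T^3$ yield $\Psi \in L^3(\,]0,T[\,;W^{1,3}(\T^3;\R^3)) \subset L^3(\T^3 \times \,]0,T[\,;\R^3)$; hence $\Psi \cdot b \in L^{3/2}$ and $a \cdot b \in L^1$ (the exponents $3$ and $3/2$ are chosen precisely so that these pairings close), so $H(t) \defeq \int_{\T^3} \Psi(x,t)\cdot b(x,t)\, dx$ defines an element of $L^{3/2}(\,]0,T[\,)$, and it suffices to prove $H' = 0$ in $\mathcal{D}'(\,]0,T[\,)$, whence $H$ agrees a.e.\ with a constant.

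For smooth fields the computation is the standard one. Writing $\mathcal{K} \defeq (-\Delta)^{-1}\curl$, a self-adjoint Fourier multiplier of order $-1$ on mean-free fields, we have $\Psi = \mathcal{K}b$, so $H = \int_{\T^3}(\mathcal{K}b)\cdot b\, dx$, and using $\partial_t b = -\curl a$, the self-adjointness of $\mathcal{K}$ and of $\curl$, and $\curl\Psi = b$,
\begin{align*}
\frac{d}{dt}\int_{\T^3}(\mathcal{K}b)\cdot b\, dx
&= 2\int_{\T^3}(\mathcal{K}\partial_t b)\cdot b\, dx
= -2\int_{\T^3}(\curl a)\cdot \mathcal{K}b\, dx \\
&= -2\int_{\T^3}a\cdot\curl\Psi\, dx
= -2\int_{\T^3}a\cdot b\, dx ,
\end{align*}
which is $0$ by Theorem~\ref{Empty interior theorem on hull}. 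To justify this at the given integrability, mollify in $x$: with a standard mollifier $\rho_\varepsilon$ on $\T^3$ and $f^\varepsilon \defeq f * \rho_\varepsilon$, convolution commutes with $\partial_t$, $\curl$, $\mathcal{K}$ and with $\int_{\T^3}\cdot\,dx$, so $\partial_t b^\varepsilon = -\curl a^\varepsilon$, $\dive b^\varepsilon = 0$, $\int_{\T^3}b^\varepsilon\,dx = 0$, $\Psi^\varepsilon = \mathcal{K}b^\varepsilon$ with $\curl\Psi^\varepsilon = b^\varepsilon$, all now smooth in $x$, and $\partial_t b^\varepsilon, \partial_t\Psi^\varepsilon \in L^{3/2}(\,]0,T[\,;C^\infty(\T^3))$ by the equations; the Leibniz rule in $t$ is then available, and repeating the display with $b,\Psi,a$ replaced by $b^\varepsilon,\Psi^\varepsilon,a^\varepsilon$ gives $H_\varepsilon' = -2\int_{\T^3}a^\varepsilon\cdot b^\varepsilon\,dx$ in $\mathcal{D}'(\,]0,T[\,)$, where $H_\varepsilon(t) \defeq \int_{\T^3}\Psi^\varepsilon(x,t)\cdot b^\varepsilon(x,t)\,dx$. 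Finally $b^\varepsilon \to b$ and $\Psi^\varepsilon = \mathcal{K}b^\varepsilon \to \mathcal{K}b = \Psi$ in $L^3(\T^3\times\,]0,T[\,)$ (boundedness of $\mathcal{K}$ on $L^3$), and $a^\varepsilon \to a$ in $L^{3/2}$, so by Hölder $H_\varepsilon \to H$ in $L^{3/2}(\,]0,T[\,)$ and $\int_{\T^3}a^\varepsilon\cdot b^\varepsilon\,dx \to \int_{\T^3}a\cdot b\,dx$ in $L^1(\,]0,T[\,)$; passing to the limit in the distributional identity and using $a\cdot b = 0$ a.e.\ gives $H' = 0$, so $H$ is a.e.\ constant.

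The only substantive input is Theorem~\ref{Empty interior theorem on hull}, i.e.\ the construction of a $\Lambda$-affine function detecting the quantity $a\cdot b$; granting $a\cdot b = 0$, what remains is the classical helicity identity together with a routine mollification. The two points that need a little care — and where I expect no real difficulty — are the bookkeeping of spatial means (the term $\int_{\T^3}b\,dx$, killed by \eqref{MHD5}, is precisely what makes $\mathcal{K}b$ a vector potential of $b$ itself rather than of $b$ minus its mean) and the verification that after the space mollification the Leibniz rule in $t$ and the integrations by parts are all legitimate.
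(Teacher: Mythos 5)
Your proof is correct and follows essentially the same route as the paper: the substantive input in both is Theorem~\ref{Empty interior theorem on hull} (the $\Lambda$-affine function $a \cdot b$ vanishes on $K^\Lambda$), combined with the identity $\partial_t \int_{\T^3} \Psi \cdot b \, dx = -2\int_{\T^3} a \cdot b \, dx$ justified by mollification. The only (harmless) difference is technical: the paper mollifies in space--time and invokes the explicit decomposition $\partial_t \Psi + a - \int_{\T^3} a(y,\cdot)\,dy = \nabla g$ of Lemma~\ref{Time evolution of vector potential}, whereas you mollify in space only and use the self-adjointness of $\nabla \times (-\Delta)^{-1}$ to the same effect.
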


For solutions of the linearized MHD equations \eqref{MHD5}--\eqref{Linearized MHD2},\eqref{Linearized MHD 4} on $\T^3$, the time evolution of magnetic helicity is given by
\begin{equation} \label{Time evolution of magnetic helicity}
\partial_t \int_{\T^3} \Psi(x,t) \cdot b(x,t) \, dx = - 2 \int_{\T^3} a(x,t) \cdot b(x,t) \, dx,
\end{equation}
and Theorem \ref{Magnetic helicity preservation theorem} follows from the fact that $a \cdot b$ vanishes in $K^\Lambda$. The details of the proof are presented in \textsection \ref{Conservation of magnetic helicity by subsolutions}.

As a quadratic $\Lambda$-affine quantity $a \cdot b$ is weakly continuous (see Lemma \ref{Weak compactness lemma}), and so, using \eqref{Time evolution of magnetic helicity}, we also show in Theorem \ref{Conservation of magnetic helicity by weak limits of solutions} that magnetic helicity is conserved by weak limits of $L^3$ solutions of 3D MHD. The same principle is behind the conservation of mean-square magnetic potential by weak limits of solutions in 2D; there the quadratic $\Lambda$-affine quantity is $b \times u = J_{(\Psi,\Phi)}$.

Despite the remarkable robustness of magnetic helicity conservation, compactly supported convex integration solutions cannot, at this point, be ruled out in 3D MHD. In fact, the $\Lambda$-convex hull $K^\Lambda$ turns out to have non-empty relative interior (relative to the constraint $a \cdot b = 0$). This result, recorded in the following theorem, is the technically most difficult part of the paper and requires careful analysis of the interplay between $K$ and $\Lambda$.

\begin{thm} \label{Non-empty relative interior theorem}
In 3D MHD, $\operatorname{int}(K^\Lambda) = \emptyset$. However, the point $(0,0,0,0)$ belongs to the relative interior of $K^\Lambda$ in the set $\{(u,b,S,a) \colon b \cdot a = 0\} \subset \R^3 \times \R^3 \times \mathcal{S}^{3 \times 3} \times \R^3$.
\end{thm}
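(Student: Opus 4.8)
\emph{Proof proposal.} The first assertion is immediate from Theorem \ref{Empty interior theorem on hull}: since the $\Lambda$-affine function $Q(u,b,S,a) = a \cdot b$ vanishes identically on $K^\Lambda$, we have $K^\Lambda \subseteq \{(u,b,S,a) : a \cdot b = 0\}$, and this set, being the zero set of a non-zero quadratic polynomial, contains no ball of $\R^3 \times \R^3 \times \mathcal{S}^{3\times 3} \times \R^3$; hence $\operatorname{int}(K^\Lambda) = \emptyset$. So the real content is the second assertion, and the plan is to show that already the lamination convex hull $K^{lc} \subseteq K^\Lambda$ — which contains $0$ because $(0,0,0,0) \in K$ — contains a relatively open neighbourhood $V \cap B(0,\varepsilon)$ of the origin in $V := \{(u,b,S,a) : b \cdot a = 0\}$.

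First I would record the wave cone. Inserting a plane wave $(\bar u, \bar b, \bar S, \bar a)\, h(x \cdot \xi + ct)$ into \eqref{Linearized MHD1}, \eqref{Linearized MHD2} and \eqref{Linearized MHD 4} shows that $(\bar u, \bar b, \bar S, \bar a) \in \Lambda$ if and only if there is some $(\xi, c) \in (\R^3 \setminus \{0\}) \times \R$ with
\[
\xi \cdot \bar u = \xi \cdot \bar b = 0, \qquad c \bar u + \bar S \xi = 0, \qquad c \bar b + \xi \times \bar a = 0 .
\]
Taking $c = 0$, one reads off that $\Lambda$ contains all $(\bar u, 0, 0, 0)$ and all $(0, \bar b, 0, 0)$, all $(0,0,\bar S,0)$ with $\bar S$ a singular symmetric matrix, all $(0,0,0,\bar a)$, and more generally all $(\bar u, 0, \bar S, 0)$ for which $\bar S$ has a kernel vector orthogonal to $\bar u$.

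Next I would build the neighbourhood in stages, each obtained from the previous by laminations. \emph{Stage 1:} $(0,0,\bar S,0) \in K^{lc}$ for every $\bar S \in \mathcal{S}^{3\times 3}$. Indeed $(\pm \delta e, 0, \Pi I + \delta^2 e \otimes e, 0)$ and $(0, \pm \delta e, \Pi I - \delta^2 e \otimes e, 0)$ lie in $K$ for all $\delta, \Pi \in \R$ and unit vectors $e$, their differences $(\pm 2\delta e, 0, 0, 0)$ and $(0, \pm 2\delta e, 0, 0)$ lie in $\Lambda$, so $(0,0,\Pi I \pm \delta^2 e \otimes e,0) \in K^{lc}$; and after diagonalising $\bar S$, the matrix $\operatorname{diag}(\lambda_1, \lambda_2, \lambda_3)$ is the midpoint of $\lambda_1 I + 2(\lambda_3 - \lambda_1)\, e_3 \otimes e_3$ and $\lambda_1 I + 2(\lambda_2 - \lambda_1)\, e_2 \otimes e_2$, whose difference is singular and hence a $\Lambda$-direction. \emph{Stage 2:} $(0,0,\bar S,\bar a) \in K^{lc}$ for every $\bar a$ and every $\bar S$. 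Starting from the $K$-points $(\pm \delta e_1, \pm s e_2, \Pi I + \delta^2 e_1 \otimes e_1 - s^2 e_2 \otimes e_2, -s\delta e_3)$ — whose difference $(2\delta e_1, 2s e_2, 0, 0)$ lies in $\Lambda$ ($\xi = e_3$, $c = 0$) — one obtains $(0,0,\Pi I + \delta^2 e_1 \otimes e_1 - s^2 e_2 \otimes e_2, -s\delta e_3) \in K^{lc}$, and then laminates such points among themselves and against those of Stage 1 to move the $S$-component to the target while keeping the $a$-component fixed. \emph{Stage 3:} general small $(\bar u, \bar b, \bar S, \bar a) \in V$. Here one starts from $(\bar u, \bar b, \bar u \otimes \bar u - \bar b \otimes \bar b + \Pi I, \bar b \times \bar u) \in K$ and laminates along wave-cone directions of the form $(\bar u', 0, \bar S', \bar a')$ and $(0, \bar b', \bar S', \bar a')$ to correct in turn the $S$- and $a$-components; the constraint $b \cdot a = 0$ is preserved automatically since $K^{lc} \subseteq K^\Lambda \subseteq V$ by Theorem \ref{Empty interior theorem on hull}, and because $\bar b \times \bar u \perp \bar b$ and $\bar a \perp \bar b$ on $V$, the required correction to the $a$-component lies in $\bar b^\perp$, where suitable wave-cone directions exist. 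Combining the stages would give $V \cap B(0,\varepsilon) \subseteq K^{lc} \subseteq K^\Lambda$.

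The routine parts are the reduction to $K^{lc}$, the computation of $\Lambda$ and Stage 1. The main obstacle is the execution of Stages 2 and 3. The wave-cone directions are genuinely constrained — for $\bar u = \bar b = 0$ the $S$-increment must be a \emph{singular} symmetric matrix whose kernel contains the $a$-increment, and perturbing $\bar u$ or $\bar b$ couples the $S$- and $a$-increments to the scalar $c$ — so one must verify that, after finitely many laminations, these directions still suffice to adjust independently all remaining degrees of freedom of $V$ near the origin, without the reachable set collapsing into a proper subvariety; this has to be carried out while bookkeeping the second-order corrections to $S$ and $a$ that the quadratic terms $u \otimes u - b \otimes b$ and $b \times u$ in the definition of $K$ force on every perturbation of $u$ or $b$. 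Organising these corrections into a scheme that terminates at $K$ — and, should the lamination convex hull prove too small in some direction, upgrading the argument to the full $\Lambda$-convex hull by excluding separating $\Lambda$-convex functions — is the crux, and is precisely the tension with the first assertion: $K^\Lambda$ is as large as the obstruction $a \cdot b = 0$ permits, and no larger.
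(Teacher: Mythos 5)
The first assertion and the overall strategy are the same as the paper's: $\operatorname{int}(K^\Lambda)=\emptyset$ follows from Theorem \ref{Empty interior theorem on hull} exactly as you say, and the paper likewise proves the second assertion by showing that already the \emph{lamination} convex hull contains a relative neighbourhood of the origin (Theorem \ref{Theorem on nonempty relative interior}, via Lemma \ref{Hull lemma}). Your preliminary computations are correct: the wave cone is as you state, the points $(\pm\delta e,0,\Pi I+\delta^2 e\otimes e,0)$ and $(0,\pm s e_2,\dots)$ do lie in $K$ with the indicated $\Lambda$-differences, and Stage 1 together with the first laminate of Stage 2 (producing $(0,0,\Pi I+\delta^2 e_1\otimes e_1-s^2e_2\otimes e_2,-s\delta e_3)$) checks out.

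The gap is that Stages 2 and 3 are not carried out, and what is missing is precisely the content of the theorem. Two concrete points. First, in Stage 2 you cannot "laminate against those of Stage 1 while keeping the $a$-component fixed": a convex combination of $(0,0,S_0,\bar a)$ with a Stage-1 point $(0,0,S',0)$ scales $\bar a$ by the convex weight, so fixing $a$ while moving $S$ already requires an inductive family of laminates whose endpoints are themselves only reachable with coupled $(S,a)$-components. Second, and more seriously, in Stage 3 with $\bar b\neq 0$ set $f_1=\bar b/|\bar b|$ and complete to an orthonormal frame $\{f_1,f_2,f_3\}$; the perturbation of $S$ in the direction $f_2\otimes f_3+f_3\otimes f_2$ is not attainable by any lamination in a single "coordinate" wave-cone direction — in the paper's Els\"asser formulation this is exactly the case $c_{23}=c_{32}\neq 0$ of Lemma \ref{Hull lemma}, which the paper singles out as the hardest step (\textsection\ref{Completion of the proof of hull lemma}): one must perturb $z^+$ along $f_2$ and $z^-$ along $f_3$ \emph{simultaneously}, solve the resulting five wave-cone constraints \eqref{Wave cone condition 1}--\eqref{Wave cone condition 5} for the free coefficients, and then perform an additional $\tfrac23$--$\tfrac13$ averaging (and a further reduction as in Lemmas \ref{Three non-vanishing terms} and \ref{Five directions}) to cancel the spurious $f_1\otimes f_2$ and $f_1\otimes f_3$ terms that this laminate unavoidably creates. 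Your proposal names this obstacle but does not resolve it, and the suggested fallback — upgrading to $K^\Lambda$ by "excluding separating $\Lambda$-convex functions" — is not a shortcut: ruling out all separating $\Lambda$-convex functions is in general harder than exhibiting laminates. As written, the proposal establishes that the reachable set contains the linear slice $\{(0,0,S,a)\colon S-\Pi I=\delta^2e_1\otimes e_1-s^2e_2\otimes e_2,\ a=-s\delta e_3\}$ and its Stage-1 companions, but not that it contains a full relative neighbourhood of $0$ in $\{b\cdot a=0\}$.
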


Theorem \ref{Non-empty relative interior theorem} suggests defining strict subsolutions in analogy to incompressible Euler equations and many other equations of fluid dynamics (see \textsection \ref{Definition of subsolutions} for the precise definition). The following 3D result is in stark contrast to Theorem \ref{Mean-square magnetic potential preservation theorem}.

\begin{thm} \label{Subsolution theorem}
In $\R^3$ the ideal MHD equations have strict subsolutions $(u,b,S,a) \in C_c^\infty(\R^3 \times \R; \R^3 \times \R^3 \times \mathcal{S}^{3 \times 3} \times \R^3)$ with $u,b \not\equiv 0$.
\end{thm}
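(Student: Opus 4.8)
The plan is to exhibit, by hand, a single smooth compactly supported field $(u,b,S,a)$ that solves the linearised system \eqref{Linearized MHD1}, \eqref{Linearized MHD2}, \eqref{Linearized MHD 4}, satisfies $a\cdot b\equiv 0$, and has $u,b\not\equiv 0$, and then to scale it down. No convex integration is needed: a subsolution is, by definition, just a solution of the \emph{linear} part taking values in $K^\Lambda$, and the delicate work on the geometry of $K^\Lambda$ is already contained in Theorem~\ref{Non-empty relative interior theorem}. By that theorem the origin lies in the relative interior of $K^\Lambda$ inside the quadric $\{(u,b,S,a)\colon a\cdot b=0\}$ (which, by Theorem~\ref{Empty interior theorem on hull}, contains all of $K^\Lambda$), so there is $\delta>0$ with $\{z\colon\abs{z}<\delta,\ a\cdot b=0\}\subseteq K^\Lambda$. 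Both the linear system and the quadric $\{a\cdot b=0\}$ are invariant under $(u,b,S,a)\mapsto\lambda(u,b,S,a)$, so for small $\lambda>0$ the field $\lambda(u,b,S,a)$ still solves the system, still satisfies $a\cdot b\equiv 0$, and has range inside a compact subset of the relative interior of $K^\Lambda$; it is therefore a strict subsolution in the sense of \textsection\ref{Definition of subsolutions}, with $\lambda u,\lambda b\not\equiv 0$. The construction splits into the two decoupled blocks $(b,a)$ and $(u,S)$.

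For $(b,a)$ I would make $a\cdot b\equiv 0$ automatic by letting $a$ and $b$ take values in orthogonal coordinate subspaces, the same mechanism that makes the constraint harmless in \cite{BLFNL} (cf.\ \textsection\ref{Discussion of the solutions of Bronzi & al.}); unlike there, one puts $b$ in the $(x_1,x_2)$-plane so that it may depend on $x_3$ and still be both divergence-free and compactly supported. Concretely, choose $\psi\in C_c^\infty(\R^3\times\R)\setminus\{0\}$ and set
\[ b\defeq\nabla\times(\psi\,e_3)=(\partial_2\psi,-\partial_1\psi,0),\qquad a\defeq(0,0,-\partial_t\psi). \]
Then $\dive b=0$ and $\nabla\times a=(-\partial_2\partial_t\psi,\partial_1\partial_t\psi,0)=-\partial_t b$, which is \eqref{Linearized MHD 4}; and $a\cdot b\equiv 0$ since the last component of $b$ and the first two of $a$ vanish. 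As $\psi$ is compactly supported in $x$ it is non-constant in $(x_1,x_2)$, hence $b\not\equiv 0$.

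For $(u,S)$ the one real point is to obtain an $S$ that is \emph{simultaneously} symmetric, compactly supported, and a solution of $\partial_t u+\dive S=0$: solving $\dive S=f$ among compactly supported symmetric tensors is obstructed (by the momenta $\int f\,dx$ and $\int x\times f\,dx$, i.e.\ by the Saint-Venant/linear-elasticity complex), so I would avoid the issue with a product ansatz. Pick $\theta\in C_c^\infty(\R^3)\setminus\{0\}$, set $V\defeq\nabla\times(\theta\,e_3)$ so that $\dive V=0$, let $T\defeq\tfrac12(\nabla V+(\nabla V)^{T})\in C_c^\infty(\R^3;\mathcal{S}^{3\times3})$ be the associated strain tensor, and put $w\defeq\dive T$. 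Using $\dive V=0$ one computes $w=\tfrac12\Delta V$ and therefore $\dive w=\tfrac12\Delta(\dive V)=0$; also $w=\tfrac12(\partial_2\Delta\theta,-\partial_1\Delta\theta,0)\not\equiv 0$ whenever $\theta\not\equiv 0$. Now choose $\eta\in C_c^\infty(\R)\setminus\{0\}$ and set
\[ u(x,t)\defeq\eta(t)\,w(x),\qquad S(x,t)\defeq-\dot\eta(t)\,T(x). \]
Then $u,S\in C_c^\infty(\R^3\times\R)$, $\dive u=\eta\,\dive w=0$, and $\partial_t u+\dive S=\dot\eta\,w-\dot\eta\,\dive T=0$, i.e.\ \eqref{Linearized MHD2}; and $u\not\equiv 0$.

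Assembling the two blocks yields $(u,b,S,a)\in C_c^\infty(\R^3\times\R;\R^3\times\R^3\times\mathcal{S}^{3\times3}\times\R^3)$ solving \eqref{Linearized MHD1}, \eqref{Linearized MHD2}, \eqref{Linearized MHD 4} with $a\cdot b\equiv 0$ and $u,b\not\equiv 0$; scaling by a small $\lambda>0$, as in the first paragraph, produces the required strict subsolution. The step I expect to need the most care is the construction of $S$, where the factorisation $u=\eta\,w$, $S=-\dot\eta\,T$ with $w=\dive T$ and $T$ a strain tensor is precisely what reconciles symmetry, compact support, and the evolution equation.
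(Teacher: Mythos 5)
Your proposal is correct and follows essentially the same route as the paper: decouple into an Euler block $(u,S)$ and a magnetic block $(b,a)$ with $a$ and $b$ valued in orthogonal coordinate subspaces so that $a\cdot b\equiv 0$ (your $(b,a)$ is exactly the special case $\eta=(e_3,0)$ of Lemma \ref{Lemma on 1D subsolutions}), then scale into the relative interior of the hull; the only genuine difference is that you build the compactly supported solution of linearized Euler explicitly via the strain-tensor ansatz $u=\eta\,\dive T$, $S=-\dot\eta\,T$, where the paper simply cites \cite[Lemma 4.4]{DLS09}. One small correction: to conclude that the scaled field is a strict subsolution in the sense of Definition \ref{Subsolutions} you must land in $\mathcal{U}_{r,s}$, the relative interior of the \emph{lamination} hull $K^{lc,\Lambda}_{r,s}$, so the scaling step should invoke Theorem \ref{Theorem on nonempty relative interior} ($0\in\mathcal{U}_{r,s}$) rather than Theorem \ref{Non-empty relative interior theorem}, which only concerns $K^{\Lambda}$.
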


Three interesting problems arise from our work. The first one is whether the emptiness of the interior of the 3D $\Lambda$-convex hull can be overcome and some variant of the convex integration approach works in this setting. The second one is an explicit computation of the $\Lambda$-convex hulls and the question whether they coincide with the relaxation of MHD. The third one is whether, in 2D, every Cauchy data determines a unique $b$ in the energy space. The last question is
a special case of the general problem whether there exists a dichotomy
between the existence of convex integration solutions and the
uniqueness of the Cauchy problem.

\section{Background}

In this section we give results on stream functions and vector and scalar potentials, and we present proofs in cases where they are difficult to find in the literature. We also discuss previously known results on conserved integral quantities in MHD.

\subsection{Stream functions in 2D} \label{Stream functions and vector potentials}
In the following standard lemma we find stream functions for solutions of the ideal MHD equations on the torus $\T^2$. The lemma concerns time-dependent mappings in Bochner spaces, and for more information on Bochner spaces we refer to \cite{HVNVW}. 

\begin{lem} \label{Stream function lemma}
If $v \in C_w([0,T[;L^2(\T^2;\R^2))$ satisfies $\dive v = 0$ and $\int_{\T^2} v(x,t) \, dx = 0$ for every $t \in [0,T[$, then there exists a unique function $\Theta \in C_w([0,T[;W^{1,2}(\T^2)) \cap C([0,T[;L^2(\T^2))$ with $-\nabla^\perp \Theta \defeq (\partial_2 \Theta,-\partial_1 \Theta) = v$ and $\int_{\T^2} \Theta(x,t) \, dx = 0$ for every $t \in [0,T[$.
\end{lem}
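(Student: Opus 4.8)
The plan is to construct $\Theta(\cdot,t)$ separately for each $t$ via a fixed bounded linear solution operator of an elliptic problem on $\T^2$, and then to transfer the temporal regularity of $v$ through that operator.

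First I would fix $t \in [0,T[$ and solve on $\T^2$ the scalar Poisson problem
\[
-\Delta \Theta(\cdot,t) = \partial_1 v_2(\cdot,t) - \partial_2 v_1(\cdot,t), \qquad \int_{\T^2}\Theta(x,t)\,dx = 0 .
\]
The right-hand side lies in $W^{-1,2}(\T^2)$ and has vanishing mean, so by Lax--Milgram on the mean-zero subspace of $W^{1,2}(\T^2)$ with the form $(w_1,w_2)\mapsto\int_{\T^2}\nabla w_1\cdot\nabla w_2$ (coercive by the Poincar\'e inequality) --- equivalently, by an explicit computation with Fourier coefficients using $\dive v(\cdot,t)=0$ and $\int_{\T^2} v(x,t)\,dx = 0$ --- there is a unique mean-zero solution $\Theta(\cdot,t)\in W^{1,2}(\T^2)$. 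Setting $g\defeq(\partial_2\Theta(\cdot,t),-\partial_1\Theta(\cdot,t))$, one checks directly that $\dive g = 0$ and $\partial_1 g_2 - \partial_2 g_1 = -\Delta\Theta(\cdot,t) = \partial_1 v_2(\cdot,t)-\partial_2 v_1(\cdot,t)$, so $g - v(\cdot,t)$ is both curl-free and divergence-free on $\T^2$ and hence a constant vector; since $g$ is a field of derivatives and $v(\cdot,t)$ has zero mean, this constant vanishes, i.e. $-\nabla^\perp\Theta(\cdot,t) = v(\cdot,t)$. In particular $\norm{\nabla\Theta(\cdot,t)}_{L^2(\T^2)} = \norm{v(\cdot,t)}_{L^2(\T^2)}$, and with the Poincar\'e inequality $\norm{\Theta(\cdot,t)}_{W^{1,2}(\T^2)}\le C\norm{v(\cdot,t)}_{L^2(\T^2)}$. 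Uniqueness within the asserted class is immediate: two admissible choices differ by a function with vanishing gradient, hence by a constant, hence --- once the mean is prescribed --- by zero.

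Let $L$ denote the resulting map $v(\cdot,t)\mapsto\Theta(\cdot,t)$; the previous paragraph shows that it is a bounded linear operator from the closed subspace of divergence-free, mean-zero fields in $L^2(\T^2;\R^2)$ into the mean-zero subspace of $W^{1,2}(\T^2)$, and therefore continuous for the respective weak topologies. From $v\in L^\infty_t L^2_x$ and the operator bound we get $\Theta = Lv \in L^\infty_t W^{1,2}_x$. For weak continuity: if $t_j\to t$, then $v(\cdot,t_j)\rightharpoonup v(\cdot,t)$ in $L^2(\T^2;\R^2)$ by hypothesis, whence $\Theta(\cdot,t_j)=Lv(\cdot,t_j)\rightharpoonup Lv(\cdot,t)=\Theta(\cdot,t)$ in $W^{1,2}(\T^2)$; together with the uniform bound this yields $\Theta\in C_w([0,T[;W^{1,2}(\T^2))$.

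It remains to upgrade to strong continuity in $L^2$. Fix $t_j\to t$. Weakly convergent sequences are bounded, so $\{v(\cdot,t_j)\}$ is bounded in $L^2(\T^2;\R^2)$ and hence $\{\Theta(\cdot,t_j)\}$ is bounded in $W^{1,2}(\T^2)$. By the compact Sobolev embedding $W^{1,2}(\T^2)\hookrightarrow\hookrightarrow L^2(\T^2)$, every subsequence of $\{\Theta(\cdot,t_j)\}$ has a further subsequence converging strongly in $L^2(\T^2)$, and by the weak convergence already established its limit must be $\Theta(\cdot,t)$; hence the whole sequence converges strongly, i.e. $\Theta\in C([0,T[;L^2(\T^2))$. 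I expect the only step requiring genuine care to be this last one --- the passage from mere weak continuity of $v$ to strong continuity of $\Theta$ --- where the gain of one derivative in the elliptic solve is precisely what powers the compactness argument; everything else is Hodge theory on the torus together with soft functional analysis.
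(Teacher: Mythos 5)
Your proposal is correct and follows essentially the same route as the paper's (sketched) proof: a time-slice elliptic solve with the Poincar\'e bound $\norm{\Theta(\cdot,t)}_{W^{1,2}}\lesssim\norm{v(\cdot,t)}_{L^2}$, weak-to-weak continuity of the bounded linear solution operator, and the compact embedding $W^{1,2}(\T^2)\hookrightarrow L^2(\T^2)$ to upgrade to strong continuity in $L^2$. The only step you pass over silently is that the space-time distributional condition $\dive v=0$ localizes to $\dive v(\cdot,t)=0$ for \emph{every} $t$ (a.e.\ $t$ is immediate from testing against products $\varphi_1(x)\varphi_2(t)$, and the weak continuity of $t\mapsto v(\cdot,t)$ extends this to all $t$), a routine point the paper does flag explicitly.
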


\begin{proof}[Sketch of proof]
At every $t \in [0,T[$ we get $\dive v(\cdot,t) = 0$ (by integrating $v$ against suitable test functions of the form $\varphi_1(x) \varphi_2(t)$). The existence and uniqueness of $\Theta(\cdot,t)$ is proven by standard Fourier analysis (similar to the proof of Lemma \ref{Helmholtz-Hodge lemma}), and Poincar\'{e} inequality gives $\norm{\Theta(\cdot,t)}_{W^{1,2}(\T^2)} \lesssim \norm{v(\cdot,t)}_{L^2}$. The mapping $t \mapsto \Theta(\cdot,t) \colon [0,T[ \to W^{1,2}(\T^2)$ is strongly measurable by the linearity and boundedness of the operator that maps $v(\cdot,t)$ to $\Theta(\cdot,t)$ and the strong measurability of $t \mapsto v(\cdot,t)$. Now a standard application of the Rellich-Kondrachov Theorem gives $\Theta \in C([0,T[;L^2(\T^2))$.
\end{proof}

One of the main ideas behind Theorem \ref{Mean-square magnetic potential preservation theorem} is that the time evolution of the stream function of $b$ is governed by a Jacobian determinant which is an $\mathcal{H}^1$ integrable quantity.

\begin{lem} \label{Time evolution lemma for Psi}
Suppose that $u,b \in C_w([0,T[;L^2(\T^2;\R^2))$ satisfy \eqref{MHD2}--\eqref{MHD5} and that $\Phi, \Psi \in C_w([0,T[;W^{1,2}(\T^2))$ are the stream functions of $u$ and $b$ given by Lemma \ref{Stream function lemma}. Then
\begin{equation} \label{Time evolution of psi in 2D}
\partial_t \Psi - J(\Psi,\Phi) = 0
\end{equation}
in $\T^2 \times ]0,T[$.
\end{lem}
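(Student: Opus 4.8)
The plan is to rewrite the induction equation \eqref{MHD2} in terms of the stream functions and then integrate it once in the space variable, removing the resulting $x$-independent ``constant of integration'' by the normalization $\int_{\T^2}\Psi(x,t)\,dx=0$. Since $u(\cdot,t)$ and $b(\cdot,t)$ are divergence free with vanishing spatial mean by \eqref{MHD3}--\eqref{MHD5}, Lemma \ref{Stream function lemma} provides $\Phi,\Psi\in C_w([0,T[;W^{1,2}(\T^2))$ with $u=-\nabla^\perp\Phi$, $b=-\nabla^\perp\Psi$ and $\int_{\T^2}\Phi(\cdot,t)\,dx=\int_{\T^2}\Psi(\cdot,t)\,dx=0$; in particular $\abs{\nabla\Psi}=\abs{b}$ pointwise a.e., so $J(\Psi,\Phi)$ is locally integrable on $\T^2\times\,]0,T[$ and all pairings below are meaningful. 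The key point is purely algebraic: in two dimensions $b\otimes u-u\otimes b$ is antisymmetric, hence carries a single scalar degree of freedom, and in stream-function variables this scalar is $J(\Psi,\Phi)$; thus \eqref{MHD2} is a curl-type equation whose scalar-potential form is, up to a function of $t$ alone, exactly \eqref{Time evolution of psi in 2D}.

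To implement this I would test the weak formulation of \eqref{MHD2} against divergence-free fields $\varphi=\nabla^\perp\zeta$ with $\zeta\in C_c^\infty(\T^2\times\,]0,T[)$, so that the initial-data term and the total pressure play no role. Using $b=(\partial_2\Psi,-\partial_1\Psi)$ gives $b\cdot\partial_t\varphi=-\nabla\Psi\cdot\nabla\partial_t\zeta$, while a short computation with the antisymmetric matrix $b\otimes u-u\otimes b$ and $u=(\partial_2\Phi,-\partial_1\Phi)$ gives $(b\otimes u-u\otimes b):\mathrm{D}\varphi=J(\Psi,\Phi)\,\Delta\zeta$. One integration by parts in $x$ then turns the weak form of \eqref{MHD2} into
\[
\int_0^T\!\!\int_{\T^2}\big[\Psi\,\partial_t\Delta\zeta+J(\Psi,\Phi)\,\Delta\zeta\big]\,dx\,dt=0\qquad\text{for all }\zeta\in C_c^\infty(\T^2\times\,]0,T[);
\]
equivalently, $\partial_t\Psi-J(\Psi,\Phi)$ annihilates $\Delta\zeta$ for every such $\zeta$. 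Since $\{\Delta\zeta:\zeta\in C_c^\infty(\T^2\times\,]0,T[)\}$ is precisely the space of $C_c^\infty$ functions whose spatial mean vanishes at every time (solve $\Delta\zeta(\cdot,t)=\psi(\cdot,t)$ on $\T^2$ by Fourier series), the distribution $\partial_t\Psi-J(\Psi,\Phi)$ annihilates every mean-zero test function, hence depends on $t$ alone, say $\partial_t\Psi-J(\Psi,\Phi)=c(t)$. Finally, testing against $\psi(x,t)=m(t)$ with $m\in C_c^\infty(\,]0,T[)$ forces $c\equiv0$: the $\partial_t\Psi$-term contributes $-\int_0^T m'(t)\big(\int_{\T^2}\Psi\,dx\big)\,dt=0$ by the normalization, and the $J(\Psi,\Phi)$-term contributes $\int_0^T m(t)\big(\int_{\T^2}J(\Psi,\Phi)\,dx\big)\,dt=0$ because $J(\Psi,\Phi)$ is a null Lagrangian, $J(\Psi,\Phi)=\dive(\Psi\,\nabla^\perp\Phi)$, and so has vanishing spatial mean over $\T^2$. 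This gives \eqref{Time evolution of psi in 2D} in $\mathcal{D}'(\T^2\times\,]0,T[)$.

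The only care required is with the low regularity $u,b\in C_w([0,T[;L^2(\T^2;\R^2))$, so that $\Psi,\Phi\in C_w([0,T[;W^{1,2}(\T^2))$ are merely locally bounded in $t$ and $J(\Psi,\Phi)$ is only locally integrable: one must justify, for Sobolev stream functions, both the identity $J(\Psi,\Phi)=\dive(\Psi\,\nabla^\perp\Phi)$ and the vanishing of $\int_{\T^2}J(\Psi,\Phi)\,dx$. This is routine by approximating $\Phi(\cdot,t)$ in $W^{1,2}(\T^2)$ by smooth functions and passing to the limit in $L^q(\T^2)$ for some $q\in(1,2)$, using the Sobolev embedding $W^{1,2}(\T^2)\hookrightarrow L^p(\T^2)$ valid for every finite $p$ in two dimensions. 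I do not foresee a genuine analytic obstacle here: the substance of the lemma is the two-dimensional algebraic collapse of $b\otimes u-u\otimes b$ to the scalar $J(\Psi,\Phi)$, together with the two elementary facts that $\Psi$ has vanishing spatial mean and that the Jacobian is a null Lagrangian.
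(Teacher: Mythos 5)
Your argument is correct and is essentially the paper's proof in weak-formulation dress: both reduce \eqref{MHD2} to the statement that $g:=\partial_t\Psi-J(\Psi,\Phi)$ has vanishing spatial gradient (you see this by testing against $\varphi=\nabla^\perp\zeta$ and identifying $\{\Delta\zeta\}$ with the spatially mean-zero test functions, the paper by rewriting the equation as $-\nabla^\perp\partial_t\Psi+\dive(\nabla^\perp\Psi\otimes\nabla^\perp\Phi-\nabla^\perp\Phi\otimes\nabla^\perp\Psi)=0$ and mollifying), and both then remove the remaining function of $t$ using the same two facts, namely $\int_{\T^2}\Psi(\cdot,t)\,dx=0$ and the null-Lagrangian identity $\int_{\T^2}J(\Psi,\Phi)(\cdot,t)\,dx=0$. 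One caveat: the sign of the Jacobian term depends on the orientation conventions for $\nabla^\perp$ and $J$ (carrying out your computation with the paper's stated conventions one actually lands on $\partial_t\Psi+J_{(\Psi,\Phi)}=0$, and the paper's own proof contains the same slip), but this is immaterial both for the argument and for the later application, where only $\int_{\T^2}\Psi\,J_{(\Psi,\Phi)}\,dx=0$ is used.
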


Note that Lemma \ref{Time evolution lemma for Psi} does not require \eqref{MHD} as an assumption. Before presenting the proof of Lemma \ref{Time evolution lemma for Psi} we fix, for the rest of this article, a mollifier $\chi \in C_c^\infty(\T^2 \times \R)$ of the tensor product form $\chi(x,t) = \chi_x(x) \chi_t(t)$, where $\int_{\T^2} \chi_x(x) \, dx = \int_{-\infty}^\infty \chi_t(t) \, dt = 1$. We assume that $\chi$ is even and $\text{supp}(\chi) \subset \T^2 \times ]-1,1[$. When $\delta > 0$, we define $\chi^\delta(x,t) \defeq \delta^{-3} \chi(x/\delta,t/\delta)$. We also denote, e.g., $\Psi_\delta \defeq \Psi * \chi^\delta$, where $\Psi$ is the stream function of $b$. Note that for every $\delta > 0$ and every $t \in [\delta,T-\delta]$,
\begin{equation} \label{Vanishing integral mean 2}
\int_{\T^2} \Psi_\delta(x,t) \, dx
= \int_{t-\delta}^{t+\delta} \chi^\delta_t(t-s) \int_{\T^2} \Psi(y,s) \int_{\T^2} \chi_x^\delta(x-y) \, dx \, dy ds = 0.
\end{equation}

\begin{proof}
The second MHD equation \eqref{MHD2} can be written as
\[-\nabla^\perp \partial_t \Psi + \dive( \nabla^\perp \Psi \otimes \nabla^\perp \Phi - \nabla^\perp \Phi \otimes \nabla^\perp \Psi) = 0.\]
Note that $-\nabla^\perp J(\Psi,\Phi) = \dive( \nabla^\perp \Psi \otimes \nabla^\perp \Phi - \nabla^\perp \Phi \otimes \nabla^\perp \Psi)$, which implies that
\[\partial_t \Psi - J(\Psi,\Phi) \eqdef g \in \mathcal{D}'(\T^2 \times ]0,T[)\]
satisfies $\nabla g = 0$. By \eqref{Vanishing integral mean 2} and a similar formula for $J(\Psi,\Phi)$ we therefore get $g_\delta = 0$ in $\T^2 \times ]\delta,T-\delta[$ for every $\delta \in ]0,T/2[$, and so $g = 0$.
\end{proof}

\begin{rem}
As pointed out to the authors by L\'{a}szl\'{o} Sz\'{e}kelyhidi Jr., in the 2D Euler equations the vorticity satisfies an identity similar to \eqref{Time evolution of psi in 2D}: when $\phi$ is the stream function and $\omega \defeq \nabla^\perp \cdot u$ is the vorticity of the velocity $u$, we have $\partial_t \omega - J(\omega,\phi) = 0$ (see \cite[p. 442]{VN}).
\end{rem}

\subsection{Vector and scalar potentials in 3D} \label{Vector and scalar potentials in 3D}
We will present 3D analogues of Lemmas \ref{Stream function lemma} and \ref{Time evolution lemma for Psi}, and we first recall the Helmholtz-Hodge decomposition in $L^p$ spaces on $\T^3$ (see \cite[Theorem 2.28]{RRS}).

\begin{lem} \label{Helmholtz-Hodge at one time point}
Suppose $1 < p < \infty$. Then every $v \in L^p(\T^3;\R^3)$ with $\int_{\T^3} v(x) \, dx = 0$ can be written uniquely as
\[v = u + \nabla g,\]
where $u \in L^p(\T^3,\R^3)$ satisfies $\dive u = 0$, $\int_{\T^3} u(x) \, dx = 0$ and $\int_{\T^3} \abs{u(x)}^p dx \lesssim_p \int_{\T^3} \abs{v(x)}^p dx$ whereas $g \in W^{1,p}(\T^3)$ satisfies $\int_{\T^3} g(x) \, dx = 0$ and $\int_{\T^3} \abs{\nabla g(x)}^p dx \lesssim_p \int_{\T^3} \abs{v(x)}^p dx$.
\end{lem}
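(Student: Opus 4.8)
The plan is to produce the decomposition explicitly on the Fourier side and to read off the $L^p$ bounds from the boundedness of the Leray projection. Writing $v = \sum_{k \in \mathbb{Z}^3 \setminus \{0\}} \hat v(k)\, e^{2\pi i k \cdot x}$ (no $k = 0$ term because $\int_{\T^3} v \, dx = 0$), I would define $u$ and $g$ by
\[
\hat u(k) \defeq \hat v(k) - \frac{(k \cdot \hat v(k))\, k}{\abs{k}^2}, \qquad \hat g(k) \defeq \frac{k \cdot \hat v(k)}{2\pi i\, \abs{k}^2} \quad (k \ne 0), \qquad \hat u(0) = \hat g(0) = 0 .
\]
For a trigonometric polynomial $v$ this is rigorous and gives at once $v = u + \nabla g$, $\dive u = 0$ (since $k \cdot \hat u(k) \equiv 0$), and $\int_{\T^3} u \, dx = \int_{\T^3} g \, dx = 0$. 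The point is that $u = \mathbb{P}v$ for the Leray projection $\mathbb{P}$, whose matrix symbol $m(k) = I - k\otimes k/\abs{k}^2$ satisfies the H\"ormander--Mikhlin bounds $\abs{\partial^\alpha m(\xi)} \lesssim_\alpha \abs{\xi}^{-\abs{\alpha}}$ on $\R^3 \setminus \{0\}$.

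Consequently, by a standard Fourier multiplier theorem on the torus (Marcinkiewicz, or transference from the $L^p(\R^3)$ boundedness of the Riesz transforms, working on the zero-mean subspace so that the value of $m$ at the origin is irrelevant), $\mathbb{P}$ extends to a bounded operator on $L^p(\T^3)$ for $1 < p < \infty$. This gives $\norm{u}_{L^p} = \norm{\mathbb{P}v}_{L^p} \lesssim_p \norm{v}_{L^p}$ and then $\norm{\nabla g}_{L^p} = \norm{v - u}_{L^p} \le \norm{v}_{L^p} + \norm{u}_{L^p} \lesssim_p \norm{v}_{L^p}$; the Poincar\'e inequality upgrades this to $g \in W^{1,p}(\T^3)$ with the stated control. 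Since trigonometric polynomials with zero mean are dense in the zero-mean subspace of $L^p(\T^3)$ and all the maps involved are bounded, the decomposition and the estimates pass to every admissible $v$.

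For uniqueness, if $u_1 + \nabla g_1 = u_2 + \nabla g_2$ with both triples as in the statement, then $u_1 - u_2 = \nabla(g_2 - g_1)$ is simultaneously divergence-free and a gradient, so $\Delta(g_2 - g_1) = \dive(u_1 - u_2) = 0$; a harmonic distribution on $\T^3$ is constant (on the Fourier side, $\abs{k}^2 \widehat{(g_2 - g_1)}(k) = 0$ forces $\widehat{(g_2-g_1)}(k) = 0$ for $k \ne 0$), and the normalisation $\int_{\T^3}(g_2-g_1)\, dx = 0$ then gives $g_1 = g_2$ and hence $u_1 = u_2$. The only non-elementary ingredient, and the step I would cite rather than reprove, is the $L^p$ boundedness of the Leray projection; a small point to watch is that the multiplier theorem must be applied on the zero-mean subspace (or to a regularised symbol) so that the jump of $m$ at $k = 0$ causes no harm. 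An equivalent route, if one prefers to avoid multipliers, is to solve $\Delta g = \dive v$ on $\T^3$ with $\int_{\T^3} g \, dx = 0$ and invoke the Calder\'on--Zygmund elliptic estimate $\norm{\nabla g}_{L^p} \lesssim_p \norm{v}_{L^p}$, then set $u \defeq v - \nabla g$.
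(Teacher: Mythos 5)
Your proof is correct. The paper does not actually prove this lemma---it is quoted directly from \cite[Theorem 2.28]{RRS}---and your argument (the Leray projection $\mathbb{P}$ with symbol $I-k\otimes k/\abs{k}^2$ as a Mikhlin--Marcinkiewicz multiplier on the zero-mean subspace, Poincar\'e to control $g$, and harmonicity of $g_2-g_1$ for uniqueness) is exactly the standard proof underlying that reference; it also coincides with the explicit Fourier-side splitting $\hat{v}(k)=\bigl(\hat{v}(k)-\abs{k}^{-2}(k\cdot\hat{v}(k))k\bigr)+\abs{k}^{-2}(k\cdot\hat{v}(k))k$ that the paper itself writes out when it upgrades this lemma to time-dependent fields in Lemma \ref{Helmholtz-Hodge lemma}.
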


For time-dependent mappings in $L^p(\T^3 \times ]0,T[;\R^3)$, a Helmholtz-Weyl decomposition in suitable Bochner spaces can be proved by using Lemma \ref{Helmholtz-Hodge at one time point} at a.e. time $t \in ]0,T[$; strong measurability follows from the fact that the operators $v \mapsto u$ and $v \mapsto \nabla g$ of Lemma \ref{Helmholtz-Hodge at one time point} are linear. However, the particular form of the Helmholtz-Weyl decomposition needed here appears difficult to find in the literature, and we therefore sketch a proof. Lemma \ref{Helmholtz-Hodge lemma} also provides information on the divergence-free component that is crucial in \textsection \ref{Conservation of magnetic helicity by subsolutions}.

\begin{lem} \label{Helmholtz-Hodge lemma}
Let $1 < p < \infty$. If $v \in L^p(\T^3 \times ]0,T[;\R^3)$ satisfies $\int_{\T^3} v(x,t) \, dx = 0$ a.e. $t \in ]0,T[$, then $v$ can be uniquely written as
\[v = \nabla \times \Theta + \nabla g,\]
where $\Theta \in L^p_t W^{1,p}_x(\T^3 \times ]0,T[;\R^3)$ satisfies $\int_{\T^3} \Theta(x,t) \, dx = 0$ a.e. $t \in ]0,T[$ and $\dive \Theta = 0$ whereas $g \in L^p_t W^{1,p}_x(\T^3 \times ]0,T[)$ satisfies $\int_{\T^3} g(x,t) \, dx = 0$ a.e. $t \in ]0,T[$. Furthermore,
\[\norm{\Theta}_{L^p_t W^{1,p}_x} + \norm{g}_{L^p_t W^{1,p}_x} \lesssim_p \norm{v}_{L^p}.\]
If $\dive v = 0$, then $g = 0$, and if $\nabla \times v = 0$, then $\Theta = 0$.
\end{lem}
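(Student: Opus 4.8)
The plan is to invoke Lemma~\ref{Helmholtz-Hodge at one time point} at almost every fixed time and then split the divergence-free component further by Fourier analysis on $\T^3$, tracking measurability in $t$ throughout. Identifying $L^p(\T^3 \times ]0,T[;\R^3)$ with the Bochner space $L^p(]0,T[;L^p(\T^3;\R^3))$, which is legitimate since $L^p(\T^3;\R^3)$ is separable, the representative $t \mapsto v(\cdot,t)$ is strongly measurable and $p$-integrable, and by hypothesis $\int_{\T^3} v(x,t)\,dx = 0$ for a.e.\ $t$. Applying Lemma~\ref{Helmholtz-Hodge at one time point} at each such $t$ gives $v(\cdot,t) = w(\cdot,t) + \nabla g(\cdot,t)$ with $\dive w(\cdot,t) = 0$, $\int_{\T^3} w(\cdot,t) = \int_{\T^3} g(\cdot,t) = 0$, and $\norm{w(\cdot,t)}_{L^p} + \norm{\nabla g(\cdot,t)}_{L^p} \lesssim_p \norm{v(\cdot,t)}_{L^p}$, where control of $\norm{g(\cdot,t)}_{L^p}$ is added via Poincar\'{e}'s inequality for the zero-mean function $g(\cdot,t)$.

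Next I would construct the divergence-free vector potential of $w$. Writing $w(\cdot,t) = \sum_{k \ne 0} \widehat{w}(k,t)\, e^{2\pi i k \cdot x}$, the constraint $\dive w(\cdot,t) = 0$ reads $k \cdot \widehat{w}(k,t) = 0$. Define $\Theta(\cdot,t)$ through $\widehat{\Theta}(k,t) \defeq \dfrac{i\,k \times \widehat{w}(k,t)}{2\pi \abs{k}^2}$ for $k \ne 0$ and $\widehat{\Theta}(0,t) \defeq 0$. Then $k \cdot \widehat{\Theta}(k,t) = 0$, so $\dive \Theta(\cdot,t) = 0$ and $\int_{\T^3}\Theta(\cdot,t) = 0$; and using the identity $k \times (k \times \widehat{w}) = k\,(k \cdot \widehat{w}) - \abs{k}^2 \widehat{w}$ together with $k \cdot \widehat{w}(k,t) = 0$ one computes $\widehat{\nabla \times \Theta}(k,t) = 2\pi i\, k \times \widehat{\Theta}(k,t) = \widehat{w}(k,t)$, that is, $\nabla \times \Theta(\cdot,t) = w(\cdot,t)$. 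Each first-order spatial derivative of $\Theta$ corresponds to a Fourier multiplier homogeneous of degree $0$ and smooth off the origin, so the Mikhlin--H\"ormander multiplier theorem on $\T^3$ (equivalently, boundedness of Riesz-type transforms) gives $\norm{\nabla\Theta(\cdot,t)}_{L^p} \lesssim_p \norm{w(\cdot,t)}_{L^p}$ for $1 < p < \infty$, and Poincar\'{e}'s inequality then controls $\norm{\Theta(\cdot,t)}_{L^p}$; altogether $\norm{\Theta(\cdot,t)}_{W^{1,p}} \lesssim_p \norm{v(\cdot,t)}_{L^p}$.

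Finally I would assemble the Bochner-space statement, prove uniqueness, and dispatch the special cases. Each of the assignments $v(\cdot,t) \mapsto w(\cdot,t)$, $v(\cdot,t) \mapsto g(\cdot,t)$ and $w(\cdot,t) \mapsto \Theta(\cdot,t)$ is a bounded linear operator between fixed function spaces, not depending on $t$, so $t \mapsto \Theta(\cdot,t)$ and $t \mapsto g(\cdot,t)$ are strongly measurable as compositions of such operators with the strongly measurable map $t \mapsto v(\cdot,t)$; raising the pointwise bounds to the power $p$ and integrating over $]0,T[$ yields $\Theta, g \in L^p_t W^{1,p}_x$, the estimate $\norm{\Theta}_{L^p_t W^{1,p}_x} + \norm{g}_{L^p_t W^{1,p}_x} \lesssim_p \norm{v}_{L^p}$, and $v = \nabla \times \Theta + \nabla g$ a.e.\ in $\T^3 \times ]0,T[$. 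For uniqueness, if $\nabla \times \Theta_1 + \nabla g_1 = \nabla \times \Theta_2 + \nabla g_2$ with both pairs as stated, then at a.e.\ $t$ taking the divergence forces $\Delta(g_2 - g_1)(\cdot,t) = 0$, hence $g_1(\cdot,t) = g_2(\cdot,t)$ by the zero-mean normalization; then $\nabla \times (\Theta_1 - \Theta_2)(\cdot,t) = 0$, and since $\dive \Theta_i = 0$ we get $\Delta(\Theta_1 - \Theta_2)(\cdot,t) = -\nabla \times \nabla \times (\Theta_1 - \Theta_2)(\cdot,t) = 0$, so $\Theta_1(\cdot,t) = \Theta_2(\cdot,t)$. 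For the special cases, $\dive(\nabla \times \Theta) = 0$ gives $\dive v = \Delta g$, so $\dive v = 0$ forces $\Delta g(\cdot,t) = 0$ and hence $g = 0$; and $\dive \Theta = 0$ gives $\nabla \times v = \nabla \times \nabla \times \Theta = -\Delta\Theta$, so $\nabla \times v = 0$ forces $\Delta\Theta(\cdot,t) = 0$ and hence $\Theta = 0$.

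I do not anticipate a genuine obstacle here; the only point requiring care is verifying that the explicit Fourier-multiplier formula for $\Theta$ really produces an \emph{honestly divergence-free} vector potential with the right $L^p$ mapping properties, and that every operator involved is independent of $t$, so that the measurability and Bochner-integrability of $\Theta$ and $g$ come for free from Lemma~\ref{Helmholtz-Hodge at one time point} and the boundedness of the multipliers.
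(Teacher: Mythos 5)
Your proof is correct and follows essentially the same route as the paper: a fixed-time application of Lemma~\ref{Helmholtz-Hodge at one time point}, construction of $\Theta$ as the curl of an inverse Laplacian of the divergence-free part (your explicit multiplier $\widehat{\Theta}(k,t) = i\,k \times \widehat{w}(k,t)/(2\pi\abs{k}^2)$ is exactly the paper's $\nabla \times T^\delta$ with $-\Delta T^\delta = u^\delta$), Calder\'{o}n--Zygmund/Mikhlin bounds for the $W^{1,p}$ estimates, strong measurability from the $t$-independence and linearity of the operators, and the same harmonicity arguments for uniqueness and the special cases. The only difference is presentational: the paper mollifies in space-time and passes to the limit to justify the Fourier computations, whereas you apply the multiplier theorem directly to the $L^p$ data at a.e.\ fixed time, which is a legitimate streamlining.
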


As in 2D, we fix for the rest of the article an even mollifier $\chi \in C_c^\infty(\T^3 \times \R)$ of the tensor product form $\chi(x,t) = \chi^x(x) \chi^t(t)$, where $\int_{\T^3} \chi^x(x) \, dx = \int_{-\infty}^\infty \chi^t(t) \, dt = 1$ and $\text{supp}(\chi) \subset \T^3 \times ]-1,1[$. When $\delta > 0$, we denote $\chi^\delta(x,t) \defeq \delta^{-4} \chi(x/\delta,t/\delta)$.

\begin{proof}
We get $\Theta$ and $g$ as limits of smooth mappings. Let $0 < \delta < \epsilon < T-\epsilon$. We denote $v_\delta \defeq v * \chi^\delta = \sum_{k \in \mathbb{Z}^3 \setminus \{0\}} c^k_\delta(t) e^{2 \pi i k \cdot x} \in C^\infty(\T^3 \times ]\epsilon,T-\epsilon[; \R^3)$ and fix $t \in \R$. We use standard Fourier analysis and the uniqueness statement of Lemma \ref{Helmholtz-Hodge at one time point} to write
\begin{align*}
   v_\delta(x,t)
&= \sum_{k \in \mathbb{Z}^3 \setminus \{0\}} \left( c_k^\delta(t) - \frac{c^k_\delta(t) \cdot k}{\abs{k}^2} k \right) e^{2 \pi i k \cdot x}
+ \sum_{k \in \mathbb{Z}^3 \setminus \{0\}} \frac{c^k_\delta(t) \cdot k}{\abs{k}^2} k \,  e^{2 \pi i k \cdot x} \\
&\eqdef \sum_{k \in \mathbb{Z}^3 \setminus \{0\}} a^k_\delta(t) e^{2 \pi i k \cdot x}
+ \sum_{k \in \mathbb{Z}^3 \setminus \{0\}} b^k_\delta(t)  e^{2 \pi i k \cdot x} = u^\delta(x,t) + \nabla g^\delta(x,t)
\end{align*}
for every $x \in \T^3$.

Now $T^\delta(x,t) \defeq \sum_{k \in \mathbb{Z}^3 \setminus \{0\}} \abs{2 \pi k}^{-2} a^k_\delta(t) e^{2 \pi i k \cdot x}$ is a solution of the Poisson equation $-\Delta T^\delta(\cdot,t) = u^\delta(\cdot,t)$ and furthermore $\dive T^\delta(\cdot,t) = 0$. Thus
\[\nabla \times (\nabla \times T^\delta)(\cdot,t) = \nabla \dive T^\delta(\cdot,t) - \Delta T^\delta(\cdot,t) = u^\delta(\cdot,t).\]
We set $\Theta^\delta(\cdot,t) \defeq \nabla \times T^\delta(\cdot,t)$ so that $u^\delta(\cdot,t) = \nabla \times \Theta^\delta(\cdot,t)$. The natural norm bound $\norm{\partial_i \partial_j T^\delta(\cdot,t)}_{L^p(\T^3)} \lesssim_p \norm{\Delta T^\delta}_{L^p(\T^3)}$ for all $i,j \in \{1,2,3\}$ (see \cite[Theorem B.7]{RRS}), combined with Lemma \ref{Helmholtz-Hodge at one time point} and the Poincar\'{e} inequality, yields
\[\norm{\Theta^\delta(\cdot,t)}_{W^{1,p}(\T^3)} + \|g^\delta(\cdot,t)\|_{W^{1,p}(\T^3)} \lesssim_p \norm{v_\delta(\cdot,t)}_{L^p(\T^3)}.\]
Now a decomposition $v = \nabla \times \Theta + \nabla g$ in $\T^3 \times ]\epsilon,T-\epsilon[$ is found via standard limiting arguments.

Uniqueness of $g$ in $\T^3 \times ]\epsilon,T-\epsilon[$ follows from Lemma \ref{Helmholtz-Hodge at one time point}. For the uniqueness of $\Theta$ suppose $\tilde{\Theta}$ is another vector potential that satisfies the conditions of Lemma \ref{Helmholtz-Hodge lemma} in $\T^3 \times ]\epsilon,T-\epsilon[$. Then $\Delta (\Theta - \tilde{\Theta}) = - \nabla \times (\nabla \times (\Theta - \tilde{\Theta})) = 0$ and $\int_{\T^3} (\Theta(x,t) - \tilde{\Theta}(x,t)) \, dx = 0$ a.e. $t \in ]\epsilon,T-\epsilon[$, which leads to $\Theta = \tilde{\Theta}$ since the periodic extension of $(\Theta-\tilde{\Theta})_\delta$ is bounded and harmonic. This gives the unique Hodge decomposition in $\T^3 \times ]0,T[$ with the desired norm bounds.

In order to finish the proof of the lemma suppose that $\dive v = 0$. Then $-\Delta g = \dive (v - \nabla \times \Theta) = 0$ and $\int_{\T^3} g(x,t) \, dx = 0$ for a.e. $t \in ]0,T[$, which implies that $g = 0$. Similarly, $\nabla \times v = 0$ leads to $-\Delta \Theta = \nabla \times (\nabla \times \Theta) = \nabla \times (v - \nabla g) = 0$, yielding $\Theta = 0$.
\end{proof}

Lemma \ref{Helmholtz-Hodge lemma} implies the following Poincar\'{e}-type lemma with norm bounds for solutions of linearized 3D MHD. 

\begin{lem} \label{Time evolution of vector potential}
Suppose $b \in L^3(\T^3 \times ]0,T[;\R^3)$ and $a \in L^{3/2}(\T^3 \times ]0,T[;\R^3)$ satisfy
\begin{align*}
\dive b &= 0, \\
\partial_t b + \nabla \times a &= 0.
\end{align*}
Then there exist unique $\Psi \in L^3_t W^{1,3}_x(\T^3 \times ]0,T[;\R^3)$ and $g \in L^{3/2}_t W^{1,3/2}_x(\T^3 \times ]0,T[)$ such that
\[b = \nabla \times \Psi \quad \text{and} \quad \partial_t \Psi + a - \int_{\T^3} a(y,\cdot) dy = \nabla g\]
with $\int_{\T^3} \Psi(x,t) \, dx = 0$ and $\int_{\T^3} g(x,t) \, dx = 0$ for a.e. $t \in ]0,T[$ and $\dive \Psi = 0$. Furthermore,
\[\norm{\Psi}_{L^3_t W^{1,3}_x} \lesssim \norm{b}_{L^3} \quad \text{and} \quad \norm{\partial_t \Psi}_{L^{3/2}} + \norm{g}_{L^{3/2}_t W^{1,3/2}_x} \lesssim \norm{a}_{L^{3/2}}.\]
\end{lem}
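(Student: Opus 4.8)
The plan is to obtain $\Psi$ directly from Lemma~\ref{Helmholtz-Hodge lemma} and then to read off $g$ from the equation $\partial_t b + \nabla\times a = 0$ via a single explicit spatial Fourier-multiplier identity; the only genuinely delicate point will be commuting that spatial operator past $\partial_t$.

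First I would apply Lemma~\ref{Helmholtz-Hodge lemma} with $p=3$ to $b$. Since $\dive b=0$, this produces a unique $\Psi\in L^3_t W^{1,3}_x(\T^3\times]0,T[;\R^3)$ with $b=\nabla\times\Psi$, $\dive\Psi=0$, $\int_{\T^3}\Psi(x,t)\,dx=0$ for a.e.\ $t$, and $\norm{\Psi}_{L^3_t W^{1,3}_x}\lesssim\norm{b}_{L^3}$; concretely $\Psi=\nabla\times(-\Delta)^{-1}b$, a spatial multiplier operator that does not touch the time variable.

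Next I would identify $\partial_t\Psi$. Mollifying in $(x,t)$ with $\chi^\delta$ and using that $\nabla\times(-\Delta)^{-1}$ commutes with convolution, one has on $\T^3\times]\epsilon,T-\epsilon[$ the classical identity $\partial_t\Psi_\delta=\nabla\times(-\Delta)^{-1}\partial_t b_\delta=-\nabla\times(-\Delta)^{-1}\nabla\times a_\delta$. A direct computation on Fourier series gives, for every $f\in L^{3/2}(\T^3;\R^3)$,
\[\nabla\times(-\Delta)^{-1}\nabla\times f = f-\int_{\T^3}f(y)\,dy+\nabla(-\Delta)^{-1}\dive f,\]
that is, $\nabla\times(-\Delta)^{-1}\nabla\times$ is the Leray projection onto divergence-free fields of zero spatial mean. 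Hence
\[\partial_t\Psi_\delta = -\Big(a_\delta-\int_{\T^3}a_\delta(y,\cdot)\,dy\Big)-\nabla(-\Delta)^{-1}\dive a_\delta,\]
whose right-hand side is bounded in $L^{3/2}$ by $C\norm{a}_{L^{3/2}}$ uniformly in $\delta$, by the $L^p$-boundedness ($1<p<\infty$) of the second-order operators $\nabla(-\Delta)^{-1}\dive$ together with Jensen's inequality for the mean term (using $\abs{\T^3}=1$). Since $\Psi_\delta\to\Psi$ in $L^{3/2}$ on compact subintervals of $]0,T[$ while $\partial_t\Psi_\delta$ remains bounded in $L^{3/2}$, a weak limit yields $\partial_t\Psi\in L^{3/2}(\T^3\times]0,T[;\R^3)$ and
\[\partial_t\Psi+a-\int_{\T^3}a(y,\cdot)\,dy=\nabla g,\qquad g\defeq-(-\Delta)^{-1}\dive a\in L^{3/2}_t W^{1,3/2}_x,\]
with $\int_{\T^3}g(x,t)\,dx=0$ a.e.\ and $\norm{\partial_t\Psi}_{L^{3/2}}+\norm{g}_{L^{3/2}_t W^{1,3/2}_x}\lesssim\norm{a}_{L^{3/2}}$ (Poincar\'e for the zeroth-order part of $g$). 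Uniqueness of $g$ follows since $\nabla g$ is prescribed by the displayed equation and the normalization $\int_{\T^3}g(x,t)\,dx=0$ pins down the remaining $x$-independent function of $t$.

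The step I expect to be the main obstacle is precisely this interchange of $\nabla\times(-\Delta)^{-1}$ with $\partial_t$: a priori $\partial_t b$ is only a distribution and $\nabla\times a$ merely lies in $L^{3/2}_t W^{-1,3/2}_x$, so one cannot literally apply the operator to $\partial_t b$. The time-mollification argument above, which mirrors the limiting procedure already used in the proof of Lemma~\ref{Helmholtz-Hodge lemma}, is designed to circumvent this, but it requires care in verifying the uniform $L^{3/2}$ bounds and in passing to the limit on all of $]0,T[$. Everything else reduces to routine manipulation of Fourier multipliers and Calder\'on--Zygmund estimates.
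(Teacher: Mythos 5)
Your proof is correct and follows essentially the same route as the paper: both obtain $\Psi$ from Lemma \ref{Helmholtz-Hodge lemma}, mollify in space–time to turn $\partial_t b = -\nabla \times a$ into a pointwise identity for $\partial_t \Psi_\delta$, and then recognize $\partial_t \Psi_\delta + a_\delta - \int_{\T^3} a_\delta(y,\cdot)\,dy$ as a gradient with the stated $L^{3/2}$ bounds. The only cosmetic difference is that you verify this last point through the explicit Leray-projection identity for $\nabla \times (-\Delta)^{-1} \nabla \times$, whereas the paper applies Lemma \ref{Helmholtz-Hodge lemma} to $a - \int_{\T^3} a(y,\cdot)\,dy$ and identifies its curl part with $-\partial_t \Psi$ via the uniqueness statement of Lemma \ref{Helmholtz-Hodge at one time point}; the two arguments are interchangeable.
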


\begin{proof}
The only part of Lemma \ref{Time evolution of vector potential} that does not follow immediately from Lemma \ref{Helmholtz-Hodge lemma} is the claim 
that in the decomposition $a - \int_{\T^3} a(y,\cdot) \, dy = \nabla \times \Theta + \nabla g$ we have $\nabla \times \Theta = - \partial_t \Psi$. In order to show this let $0 < \epsilon < T/2$. Whenever $0 < \delta < \epsilon$, we write $a_\delta - (\int_{\T^3} a(y,\cdot) \, dy)_\delta = \nabla \times \Theta_\delta + \nabla g_\delta$. On the other hand, from the equation $\nabla \times (\partial_t \Psi + a - \int_{\T^3} a(y,\cdot) \, dy) = 0$ and Lemma \ref{Helmholtz-Hodge lemma} we get $\partial_t \Psi_\delta + a_\delta - (\int_{\T^3} a(y,\cdot) \, dy)_\delta = \nabla \tilde{g}$. By Lemma \ref{Helmholtz-Hodge lemma}, $\dive \Psi = 0$, and so $\dive \partial_t \Psi_\delta = 0$. Since $\int_{\T^3} \partial_t \Psi_\delta(x,t) \, dx = 0$ for every $t \in ]\epsilon,T-\epsilon[$, the uniqueness of the Helmholtz-Weyl decomposition in Lemma \ref{Helmholtz-Hodge at one time point} implies that  $\nabla \times \Theta_\delta = - \partial_t \Psi_\delta$ and $g_\delta = \tilde{g}$. Thus $\nabla \times \Theta = \lim_{\delta \searrow 0} \nabla \times \Theta_\delta = \lim_{\delta \searrow 0} -\partial_t \Psi_\delta = - \partial_t \Psi$ in $\mathcal{D}'(\T^3 \times ]\epsilon,T-\epsilon[;\R^3)$, which proves the claim.
\end{proof}

\subsection{Classically conserved quantities of ideal MHD} \label{Classically conserved quantities of ideal MHD}
We define three classically conserved quantities of ideal 3D MHD on the torus $\T^3$; analogous definitions, under suitable assumptions, are available in $\R^3$.

\begin{defin}
Suppose that $u,b \in C^\infty(\T^3 \times ]0,T[; \R^3)$ and $\Pi \in C^\infty(\T^3 \times ]0,T[)$ satisfy the ideal MHD equations and that $\Psi \in C^\infty(\T^3 \times ]0,T[;\R^3)$ satisfies $\nabla \times \Psi = b$ and $\int_{\T^3} \Psi(x,t) \, dx = 0$ for every $t \in ]0,T[$. The \emph{total energy}, \emph{magnetic helicity} and \emph{cross helicity} of $(u,b,\Pi)$ are defined as
\begin{align*}
&\frac{1}{2} \int_{\T^3} (\abs{u(x,t)}^2+\abs{b(x,t)}^2) \, dx, \\
&\int_{\T^3} \Psi(x,t) \cdot b(x,t) \, dx, \\
&\int_{\T^3} u(x,t) \cdot b(x,t) .\, dx.
\end{align*}
\end{defin}

All three quantities defined above are conserved in time by smooth solutions. For results on total energy and cross helicity conservation for weak solutions we refer to \cite{CKS}, \cite{KL} and \cite{Yu}. Conservation of the magnetic helicity was shown in \cite{CKS} for $u \in C([0,T]; B_{3,\infty}^{\alpha_1}(\T^3;\R^3))$ and $b \in C([0,T]; B_{3,\infty}^{\alpha_2}(\T^3;\R^3))$ with $\alpha_1 + 2 \alpha_2 > 0$. In \cite{KL}, Kang and Lee showed magnetic helicity conservation under the assumption that $u,b \in C_w([0,T];L^2(\R^3;\R^3)) \cap L^3_t L^3_x(\R^3 \times ]0,T[; \R^3)$, although the assumptions on $\Psi$ are not made completely explicit. Theorems \ref{Magnetic helicity preservation theorem} and \ref{Conservation of magnetic helicity by weak limits of solutions} generalise magnetic helicity conservation to subsolutions and weak limits of $L^3$ solutions. In 2D, magnetic helicity has the following natural counterpart.

\begin{defin}
Suppose $u,b \in C^\infty(\T^2 \times ]0,T[; \R^2)$ and $\Pi \in C^\infty(\T^2 \times ]0,T[)$ satisfy the ideal MHD equations and $\Psi \in C^\infty(\T^2 \times ]0,T[;\R^2)$ is the stream function $\Psi$ of $b$ with $\int_{\T^2} \Psi(x,t) \, dx = 0$ for every $t \in ]0,T[$. The \emph{mean-square magnetic potential} of $(u,b,\Pi)$ is defined as $\int_{\T^2}  \abs{\Psi(x,t)}^2 dx$.
\end{defin}

In \cite{CKS}, the conservation of the mean-square magnetic potential is shown for $u$ and $\Psi$ in the Besov spaces $C([0,T]; B^{\alpha_1}_{3,\infty}(\T^2,\R^2))$ and $C([0,T]; B^{\alpha_2}_{3,\infty}(\T^2,\R^2))$ respectively, where $\alpha_1 + 2 \alpha_2 > 1$. In Theorem \ref{Mean-square magnetic potential preservation theorem} we prove conservation under the assumption that $u,b \in C_w([0,T[;L^2(\T^2 ;\R^2))$.

\section{Ideal MHD in the Tartar framework} \label{Ideal MHD in the Tartar framework}

This article is devoted to studying the MHD equations in the Tartar framework, and in this section we recall many of the relevant definitions. We also compute the wave cone in both 2D and 3D.
The computation of the wave cone and Theorem \ref{Theorem on nonempty relative interior} are also partial results towards the existence of compactly supported convex integration solutions of 3D MHD. For more information on the Tartar framework see \cite{Tartar79}, \cite{Tartar83} and in the context of fluid dynamics see \cite{CFG}, \cite{DLS12}, \cite{Szekelyhidi}.

\subsection{Linearization of MHD in Els\"asser variables}
In order to facilitate the ensuing computations we use the Els\"asser (characteristic) variables $z^{\pm} \defeq u \pm b$ to rewrite the MHD equations in terms of $(z^+,z^-,\Pi)$ in the symmetric form
\begin{equation} \label{MHD in Elsasser variables}
\begin{cases}
\partial_t z^+ + \dive(z^+ \otimes z^- + \Pi I) &= 0, \\
\partial_t z^- + \dive(z^- \otimes z^+ + \Pi I) &= 0, \\
\dive z^{\pm} &= 0
  \end{cases}
\end{equation}
(for the main motivation, however, see Remark \ref{Remark on Elsasser variables}). In order to  write \eqref{Linearized MHD1}--\eqref{Linearized MHD3} in this formalism we denote $M \defeq S + A \colon \R^n \times \R \to \R^{n \times n}$ so that $S = (M+M^T)/2$ and $A = (M-M^T)/2$.

\begin{defin}
The linear partial differential operator $\mathcal{L} \colon \mathcal{D}'(\R^n \times \R; \R^n \times \R^n \times \R^{n \times n}) \to \mathcal{D}'(\R^n \times \R; \R^n \times \R^n \times \R \times \R)$ is defined by
\begin{equation} \label{Definition of L}
\mathcal{L}(z^+,z^-,M) \defeq \left[ \begin{array}{c}
\partial_t z^+ + \dive M \\
\partial_t z^- + \dive M^T \\
\dive z^+ \\
\dive z^-
  \end{array} \right].
\end{equation}
On the torus $\T^n$ we add to $\mathcal{L}$ the two components $t \mapsto \int_{\T^n} z^\pm(x,t) \, dx$.
\end{defin}
We decouple \eqref{MHD in Elsasser variables} into the linear equation $\mathcal{L}(z^+,z^-,M) = 0$ and the pointwise constraint that $(z^+,z^-,M)$ takes values in the set
\begin{equation} \label{Definition of K}
K \defeq \{(z^+,z^-,M) \colon M = z^+ \otimes z^- + \Pi I, \; \Pi \in \R\}.
\end{equation}
We also define normalized versions of $K$ by setting
\[K_{r,s} \defeq \{(z^+,z^-,M) \colon \abs{z^+} = r, \abs{z^-} = s, \; M = z^+ \otimes z^-+ \Pi I, \; \abs{\Pi} \le rs\}\]
for all $r,s > 0$.

\begin{rem} \label{Remark on Elsasser variables}
The use of Els\"{a}sser variables is natural in the context of convex integration for MHD. Indeed, it is still an open problem whether in $\R^3$ there exist compactly supported weak solutions of MHD which do not conserve cross helicity. This suggests, in analogy to the work done on many other equations of fluid dynamics, an attempt to prescribe the total energy and the cross helicity (densities) at every time $t$. One would achieve this if one could prescribe $\abs{z^+}$ and $\abs{z^-}$, as $(\abs{u}^2 + \abs{b}^2)/2 = (\abs{z^+}^2 + \abs{z^-}^2)/4$ and $u \cdot b = (\abs{z^+}^2-\abs{z^-}^2)/4$. In contrast, $u \cdot b$ obviously cannot be written in terms of $\abs{u}$ and $\abs{b}$.
\end{rem}

\subsection{The wave cone and the lamination convex hull} \label{The wave cone and the lamination convex hull}
\emph{Plane waves} are one-dimensional oscillations $(x,t) \mapsto h((x,t) \cdot (\xi,c)) (\alpha,\beta,M)$ with $(\alpha,\beta,M) \in \R^n \times \R^n \times \R^{n \times n}$, $(\xi,c) \in (\R^n \times \R) \setminus \{0\}$ and $h \colon \R \to \R$, and the wave cone, defined below, gives the set of directions of plane waves satisfying the linearized MHD equation $\mathcal{L}(z^+,z^-,M) = 0$. Here and in the sequel we will often use the isomorphism
\[(\alpha,\beta,M) \mapsto \left[ \begin{array}{cc}
M & \alpha \\
\beta^T & 0
  \end{array} \right] \colon \R^n \times \R^n \times \R^{n \times n} \to \R^{(n+1) \times (n+1)}_0,\]
where $\R^{(n+1) \times (n+1)}_0 \defeq \{A \in \R^{(n+1) \times (n+1)} \colon a_{n+1,n+1} = 0\}$. The linearized MHD equation $\mathcal{L}(z^+,z^-,M) = 0$ can be written in divergence form as
\[\dive_{x,t} \left[ \begin{array}{cc}
M & z^+ \\
(z^-)^T & 0
  \end{array} \right]
= \dive_{x,t} \left[ \begin{array}{cc}
M^T & z^- \\
(z^+)^T & 0
  \end{array} \right] = 0.\]

\begin{defin}
The \emph{wave cone} of ideal MHD is
\[\Lambda_0 = \left\{ V \in \R^{4 \times 4}_0 \colon \exists \left[ \begin{array}{c}
                       \xi \\
                       c
                     \end{array} \right] \in \R^{n+1} \setminus \{0\} \text{ such that } V \left[ \begin{array}{c}
                       \xi \\
                       c
                     \end{array} \right] = V^T \left[ \begin{array}{c}
                       \xi \\
                       c
                     \end{array} \right] = 0 \right\}.\]
We also denote
\[\Lambda = \left\{ V \in \R^{4 \times 4}_0 \colon \exists \left[ \begin{array}{c}
                       \xi \\
                       c
                     \end{array} \right] \in (\R^n \setminus \{0\}) \times \R \text{ such that } V \left[ \begin{array}{c}
                       \xi \\
                       c
                     \end{array} \right] = V^T \left[ \begin{array}{c}
                       \xi \\
                       c
                     \end{array} \right] = 0 \right\}.\]
\end{defin}

We will use the subset $\Lambda$ of the wave cone $\Lambda_0$ as it is much easier to use in convex integration. For convex integration it is important to gain enough information about the lamination convex hull of $K$, which we next define. Given a set $Y \subset \R^{(n+1) \times (n+1)}_0$ we denote $Y^{0,\Lambda} \defeq Y$ and define inductively
\[Y^{N+1,\Lambda} \defeq Y^{N,\Lambda} \cup \{\lambda V + (1-\lambda) W \colon \lambda \in [0,1], \; V, W \in Y^{N,\Lambda}, \; V-W \in \Lambda\}\]
for all $N \in \N_0$.

\begin{defin}
When $Y \subset \R^{(n+1) \times (n+1)}_0$, the \emph{lamination convex hull} of $Y$ (with respect to $\Lambda$) is
\[Y^{lc,\Lambda} \defeq \bigcup_{N \ge 0} Y^{N,\Lambda}.\]
\end{defin}

In addition to the lamination convex hull, another, potentially larger, hull is used in convex integration theory. In order to define it recall the following

\begin{defin}
A function $f \colon \R^{(n+1) \times (n+1)}_0 \to \R$ is said to be \emph{$\Lambda$-convex} if the function $t \mapsto f(V + tW) \colon \R \to \R$ is convex for every $V \in \R^{(n+1) \times (n+1)}_0$ and every $W \in \Lambda$.
\end{defin}

While the lamination convex hull is defined by taking convex combinations, the $\Lambda$-convex hull of $Y \subset \R^{(n+1) \times (n+1)}_0$ is defined as the set of points that cannot be separated from $Y$ by $\Lambda$-convex functions.

\begin{defin}
When $Y \subset \R^{(n+1) \times (n+1)}_0$, the $\Lambda$-\emph{convex hull} $Y^\Lambda$ consists of points $W \in \R^{(n+1) \times (n+1)}_0$ with the following property: if $f \colon \R^{(n+1) \times (n+1)}_0 \to \R$ is $\Lambda$-convex and $f|_Y \le 0$, then $f(W) \le 0$.
\end{defin}

\subsection{Computation of the wave cone in 3D MHD} \label{Computation of the wave cone}
Recall that $(\alpha,\beta,M) \in \Lambda$ if and only if there exists $(\xi,c) \in (\R^3 \setminus \{0\}) \times \R$ such that
\begin{equation} \label{Wave cone conditions}
\begin{cases}
M \xi + c \alpha & = 0, \\
M^T \xi + c \beta & = 0, \\
\alpha \cdot \xi & = 0, \\
\beta \cdot \xi & = 0.
  \end{cases}
\end{equation}
We split the computation of $\Lambda$ into a few special cases. When $\alpha, \beta \in \R^3$ with $\alpha \times \beta \neq 0$,  the nine tensor products $\alpha \otimes \alpha$, $\alpha \otimes \beta$, $\alpha \otimes (\alpha \times \beta)$, $\beta \otimes \alpha$, $\beta \otimes \beta$, $\beta \otimes (\alpha \times \beta)$, $(\alpha \times \beta) \otimes \alpha$, $(\alpha \times \beta) \otimes \beta$, $(\alpha \times \beta) \otimes (\alpha \times \beta)$ form a basis of $\R^{3 \times 3}$, and we write elements of $\R^{3 \times 3}$ in the form $M = c_{11} \alpha \otimes \alpha + \cdots + c_{33} (\alpha \times \beta) \otimes (\alpha \times \beta)$.

\vspace{0.4cm}
\begin{lem} \label{Wave cone proposition 1}
When $\alpha \times \beta \neq 0$, the triple $(\alpha,\beta,M) \in \Lambda$ if and only if
\[
M = c_{11} \alpha \otimes \alpha + c_{12} \alpha \otimes \beta + c_{13} [\alpha \otimes (\alpha \times \beta) + (\alpha \times \beta) \otimes \beta]
+ c_{21} \beta \otimes \alpha + c_{22} \beta \otimes \beta\]
for some $c_{11},\dots,c_{22} \in \R$.
\end{lem}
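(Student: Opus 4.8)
The plan is to solve the linear system \eqref{Wave cone conditions} directly, using that $\alpha \times \beta \neq 0$ makes $\{\alpha,\beta,\alpha\times\beta\}$ a basis of $\R^3$. First I would note that the last two equations $\alpha\cdot\xi = \beta\cdot\xi = 0$ force $\xi$ to be a nonzero scalar multiple of $\alpha\times\beta$, because $\alpha$ and $\beta$ are linearly independent. Since \eqref{Wave cone conditions} is homogeneous in $(\xi,c)$, up to rescaling I may take $\xi = \alpha\times\beta$, and what remains is to characterize those $M$ for which there exists $c\in\R$ with $M(\alpha\times\beta) = -c\alpha$ and $M^T(\alpha\times\beta) = -c\beta$.

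Next I would expand $M$ in the basis of nine tensor products introduced just before the statement, writing $M = \sum_{i,j=1}^3 c_{ij}\, e_i\otimes e_j$ with $e_1 = \alpha$, $e_2 = \beta$, $e_3 = \alpha\times\beta$. The crucial point is that $e_3$ is orthogonal to both $e_1$ and $e_2$, so $(e_i\otimes e_j)(\alpha\times\beta) = (e_j\cdot(\alpha\times\beta))\, e_i$ vanishes unless $j = 3$, while $e_3\cdot(\alpha\times\beta) = \gamma$ where $\gamma \defeq \abs{\alpha\times\beta}^2 > 0$. Hence
\[M(\alpha\times\beta) = \gamma\bigl(c_{13}\alpha + c_{23}\beta + c_{33}(\alpha\times\beta)\bigr), \qquad M^T(\alpha\times\beta) = \gamma\bigl(c_{31}\alpha + c_{32}\beta + c_{33}(\alpha\times\beta)\bigr).\]
Matching the first identity with $-c\alpha$ and the second with $-c\beta$, and invoking the linear independence of $\alpha,\beta,\alpha\times\beta$, I would read off $c_{23} = c_{31} = c_{33} = 0$ together with $\gamma c_{13} = \gamma c_{32} = -c$; in particular $c_{13} = c_{32}$. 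This is precisely the claimed normal form, with the common value $c_{13}$ multiplying $\alpha\otimes(\alpha\times\beta) + (\alpha\times\beta)\otimes\beta$.

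For the converse I would simply run the computation backwards: given $M$ of the stated form, set $\xi \defeq \alpha\times\beta$ and $c \defeq -\gamma c_{13}$; the two displayed identities then give $M\xi + c\alpha = M^T\xi + c\beta = 0$, and $\alpha\cdot\xi = \beta\cdot\xi = 0$ holds by the definition of the cross product, so $(\alpha,\beta,M)\in\Lambda$.

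I do not anticipate a real obstacle here. The only point requiring a little care is that $\{\alpha,\beta,\alpha\times\beta\}$ need not be orthogonal --- the angle between $\alpha$ and $\beta$ is arbitrary --- but this is harmless, since every computation only pairs a basis vector against $\alpha\times\beta$, which is orthogonal to both $\alpha$ and $\beta$; the remaining work is just the bookkeeping of tracking which of the nine coefficients survive.
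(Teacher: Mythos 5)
Your argument is correct and is essentially the same as the paper's: both reduce to $\xi = \alpha\times\beta$ via the orthogonality conditions, expand $M$ in the nine tensor products, observe that only the coefficients $c_{13},c_{23},c_{33}$ (resp. $c_{31},c_{32},c_{33}$) survive when applying $M$ (resp. $M^T$) to $\alpha\times\beta$, and read off $c_{13}=c_{32}$, $c_{23}=c_{31}=c_{33}=0$, with the converse given by the explicit choice $(\xi,c)=(\alpha\times\beta,-\abs{\alpha\times\beta}^2 c_{13})$. No gaps.
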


\begin{proof}
Suppose \eqref{Wave cone conditions} are satisfied. Then $\xi$ is necessarily a nonzero multiple of $\alpha \times \beta$, and we may assume that $\xi = \alpha \times \beta$. By using the fact that $\alpha \cdot \alpha \times \beta = \beta \cdot \alpha \times \beta = 0$ we get
\[\begin{array}{lcl}
& & M (\alpha \times \beta) \\
&=& c_{13} [\alpha \otimes (\alpha \times \beta)] (\alpha \times \beta) + c_{23} [\beta \otimes (\alpha \times \beta)] (\alpha \times \beta) \\
&+& c_{33} [(\alpha \times \beta) \otimes (\alpha \times \beta)] (\alpha \times \beta) \\
&=& c_{13} \abs{\alpha \times \beta}^2 \alpha + c_{23} \abs{\alpha \times \beta}^2 \beta + c_{33} \abs{\alpha \times \beta}^2 \alpha \times \beta
  \end{array}\]
and
\[\begin{array}{lcl}
& & M^T (\alpha \times \beta) \\
&=& c_{31} [(\alpha \times \beta) \otimes \alpha]^T (\alpha \times \beta) + c_{32} [(\alpha \times \beta) \otimes \beta]^T (\alpha \times \beta) \\
&+& c_{33} [(\alpha \times \beta) \otimes (\alpha \times \beta]^T (\alpha \times \beta) \\
&=& c_{31} \abs{\alpha \times \beta}^2 \alpha + c_{32} \abs{\alpha \times \beta}^2 \beta + c_{33} \abs{\alpha \times \beta}^2 \alpha \times \beta.
  \end{array}\]
Now \eqref{Wave cone conditions} implies that $c_{13} = c_{32}$ and $c_{23} = c_{31} = c_{33} = 0$ but the coefficients $c_{11}$, $c_{12}$, $c_{21}$ and $c_{22}$ are free. For the converse choose $(\xi,c) = (\alpha \times \beta, - c_{13} \abs{\alpha \times \beta}^2)$ in \eqref{Wave cone conditions}.
\end{proof}

\begin{lem} \label{Wave cone proposition 2}
Suppose $\alpha \neq 0$ and $k \in \R$. We have $(\alpha,k\alpha,M) \in \Lambda$ if and only if $M$ is of the form
\begin{equation} \label{Form of M}
\begin{array}{lcl}
M &=& c_{11} \alpha \otimes \alpha + c_{12} [\alpha \otimes \gamma + k \gamma \otimes \alpha] \\
&+& c_{13} \alpha \otimes (\alpha \times \gamma) + c_{31} (\alpha \times \gamma) \otimes \alpha + c_{33} (\alpha \times \gamma) \otimes (\alpha \times \gamma)
\end{array}
\end{equation}
for some $\gamma \neq 0$ with $\gamma \perp \alpha$ and $c_{11},\dots,c_{33} \in \R$. In particular, $(\alpha,k\alpha,f \otimes \alpha + \alpha \otimes g) \in \Lambda$ for every $f,g \in \R^3$.
\end{lem}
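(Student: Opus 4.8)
The plan is to work directly with the wave cone conditions \eqref{Wave cone conditions}, specialized to $\beta = k\alpha$, and to exploit the orthonormal-type frame $\{\alpha, \gamma, \alpha \times \gamma\}$ attached to a null direction perpendicular to $\alpha$. The first observation is that when $\beta = k\alpha$ the fourth equation $\beta \cdot \xi = 0$ is a consequence of the third, $\alpha \cdot \xi = 0$, so the only constraint on a null direction is $\xi \perp \alpha$, $\xi \neq 0$. Unlike in Lemma \ref{Wave cone proposition 1}, here $\alpha \times \beta = 0$, so the null direction is not pinned down by $\alpha$ and $\beta$; it will instead be the free vector $\gamma$ appearing in \eqref{Form of M}, which is why $\gamma$ enters the statement as a quantifier rather than as a cross product.

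For the ``only if'' direction I would start from a null pair $(\xi,c)$ witnessing $(\alpha,k\alpha,M) \in \Lambda$, put $\gamma := \xi$ (nonzero, orthogonal to $\alpha$), and expand $M = \sum_{i,j} d_{ij}\, e_i \otimes e_j$ in the tensor basis generated by $e_1 = \alpha$, $e_2 = \gamma$, $e_3 = \alpha \times \gamma$. Since $(e_i \otimes e_j)\gamma = (e_j \cdot \gamma) e_i$ and $e_j \cdot \gamma = 0$ unless $j = 2$, the product $M\gamma$ picks out only the ``second-index-$\gamma$'' row, $|\gamma|^2 (d_{12}\alpha + d_{22}\gamma + d_{32}(\alpha \times \gamma))$, and similarly $M^T \gamma = |\gamma|^2 (d_{21}\alpha + d_{22}\gamma + d_{23}(\alpha \times \gamma))$. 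The equations $M\gamma = -c\alpha$ and $M^T\gamma = -ck\alpha$ then force $d_{22} = d_{32} = d_{23} = 0$, $c = -|\gamma|^2 d_{12}$, and $d_{21} = k d_{12}$ (from comparing $\alpha$-components), which is precisely the form \eqref{Form of M} with $c_{ij} = d_{ij}$. Conversely, given $M$ of the form \eqref{Form of M} for some $\gamma \perp \alpha$, $\gamma \neq 0$, I take $\xi = \gamma$ and $c = -|\gamma|^2 c_{12}$ and check \eqref{Wave cone conditions}: the basis tensors $\alpha \otimes \alpha$, $\alpha \otimes (\alpha \times \gamma)$, $(\alpha \times \gamma) \otimes \alpha$, $(\alpha \times \gamma) \otimes (\alpha \times \gamma)$ all kill $\gamma$ (and their transposes kill $\gamma$) because the relevant factor is orthogonal to $\gamma$, while the $c_{12}$-term contributes $c_{12}|\gamma|^2 \alpha$ to $M\gamma$ and $c_{12} k |\gamma|^2 \alpha$ to $M^T\gamma$, matching $-c\alpha$ and $-ck\alpha$; the conditions $\alpha \cdot \xi = \beta \cdot \xi = 0$ are immediate.

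For the ``in particular'' assertion it is cleanest to verify the wave cone conditions for $M = f \otimes \alpha + \alpha \otimes g$ directly rather than fitting $M$ into \eqref{Form of M}: for any $\xi \perp \alpha$ one has $M\xi = (g \cdot \xi)\alpha$ and $M^T\xi = (f \cdot \xi)\alpha$, automatically multiples of $\alpha$. Choosing $\xi \neq 0$ in the (at least one-dimensional) subspace $\{\alpha, f - kg\}^\perp$ and setting $c = -(g \cdot \xi)$ yields $M\xi + c\alpha = 0$ and $M^T\xi + ck\alpha = \big((f - kg)\cdot\xi\big)\alpha = 0$. (If one prefers to deduce it from the main equivalence, one instead picks $\gamma \perp \alpha$ with $\gamma \perp (f - kg)$, so that the $\gamma \otimes \alpha$ and $\alpha \otimes \gamma$ coefficients of $M$ stand in the ratio $k:1$ required by \eqref{Form of M}.)

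I do not expect a genuine obstacle here. The only point needing a little care is the bookkeeping of which tensor-basis elements survive the operations $M \mapsto M\gamma$ and $M \mapsto M^T\gamma$, together with keeping track of the fact that $d_{21} = kd_{12}$ (and $d_{22}=d_{23}=d_{32}=0$) is exactly what the two vector equations impose — and remembering that, because $\beta$ is parallel to $\alpha$, the admissible null directions fill the entire plane $\alpha^\perp$ and must be encoded by the free parameter $\gamma$ rather than by a fixed cross product as in Lemma \ref{Wave cone proposition 1}.
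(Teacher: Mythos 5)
Your proof of the main equivalence is correct and follows essentially the same route as the paper: take the null direction $\xi = \gamma \perp \alpha$ (noting that $\beta\cdot\xi = k\,\alpha\cdot\xi$ makes the fourth condition automatic), expand $M$ in the tensor basis generated by $\{\alpha,\gamma,\alpha\times\gamma\}$, and read off from $M\gamma = -c\alpha$ and $M^T\gamma = -ck\alpha$ that the coefficients of $\gamma\otimes\gamma$, $\gamma\otimes(\alpha\times\gamma)$ and $(\alpha\times\gamma)\otimes\gamma$ vanish while the $\gamma\otimes\alpha$ coefficient equals $k$ times the $\alpha\otimes\gamma$ coefficient; this is exactly the computation in the paper, with the bookkeeping made slightly more explicit. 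Where you genuinely diverge is the ``in particular'' claim. The paper fits $M = f\otimes\alpha+\alpha\otimes g$ into the canonical form \eqref{Form of M} by a four-way case analysis (according to whether $\alpha\times(f-kg)$, $\alpha\times f$ and $k$ vanish), choosing $\gamma$ separately in each case. You instead verify the wave cone conditions \eqref{Wave cone conditions} directly: for any $\xi\perp\alpha$ both $M\xi=(g\cdot\xi)\alpha$ and $M^T\xi=(f\cdot\xi)\alpha$ are multiples of $\alpha$, and picking $0\neq\xi\in\{\alpha,f-kg\}^\perp$ (nonempty since the orthogonal complement of a subspace of dimension at most two in $\R^3$ is at least one-dimensional) together with $c=-(g\cdot\xi)$ kills both equations at once. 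This is shorter and eliminates the case splitting entirely; the paper's route buys only the explicit coefficient $c_{12}=\langle\gamma,g\rangle$ in the normal form, which is not used elsewhere. Both arguments are complete, and I see no gap in yours.
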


\begin{proof}[Proof]
Let $0 \neq \xi \perp \alpha$. Denote $\gamma = \xi$ and write
\[M = c_{11} \alpha \otimes \alpha + c_{12} \alpha \otimes \gamma + \cdots + c_{33} (\alpha \times \gamma) \otimes (\alpha \times \gamma).\]
Now \eqref{Wave cone conditions} is equivalent to
\begin{align*}
& M \gamma + c \alpha
= c_{12} \abs{\gamma}^2 \alpha + c_{22} \abs{\gamma}^2  \gamma + c_{32} \abs{\gamma}^2 (\alpha \times \gamma) + c \alpha = 0, \\
& M^T \gamma + k c \alpha
= c_{21} \abs{\gamma}^2 \alpha + c_{22} \abs{\gamma}^2 \gamma + c_{23} \abs{\gamma}^2 (\alpha \times \gamma) + k c \alpha = 0,
\end{align*}
which proves the first claim.

In order to demonstrate the second claim we fix $f,g \in \R^3$. If $\alpha \times (f - kg) \neq 0$, we set $\gamma \defeq \alpha \times (f - kg)/\abs{\alpha \times (f - kg)}$. Writing $f = \langle f, \alpha/\abs{\alpha} \rangle \alpha/\abs{\alpha} + \langle f, \gamma \rangle \gamma + \langle f, \alpha/\abs{\alpha} \times \gamma \rangle \alpha/\abs{\alpha} \times \gamma$ and similarly for $g$ we note that $\langle f, \gamma \rangle = k \langle \gamma, g \rangle$ and so $f \otimes \alpha + \alpha \otimes g$ is of the form \eqref{Form of M}, where $c_{12} = \langle \gamma, g \rangle$. If, on the other hand, $\alpha \times f = k \alpha \times g \neq 0$, we set $\gamma = \alpha \times f/\abs{\alpha \times f}$. As above, $f \otimes \alpha + \alpha \otimes g$ is of the form \eqref{Form of M} with $c_{12} = 0$. Next, if $\alpha \times f = k \alpha \times g = 0$ and $k \neq 0$, then $f \otimes \alpha + \alpha \otimes g$ is simply a multiple of $\alpha \otimes \alpha$. Finally, if $\alpha \times f = 0$ and $k = 0$, we choose any unit vector $\gamma \perp \alpha$; \eqref{Form of M} is satisfied with $c_{12} = \langle \gamma, g \rangle$.
\end{proof}

All that is left is the characterization of the case $\alpha = 0$. When $\alpha = 0$ and $\beta \neq 0$, we have a similar situation as in the preceding lemma:

\begin{lem} \label{Wave cone proposition 3}
When $\beta \neq 0$, we have $(0,\beta,M) \in \Lambda$ if and only if
\[
M = c_{12} \gamma \otimes \beta + c_{22} \beta \otimes \beta + c_{23} \beta \otimes (\gamma \times \beta) + c_{32} (\gamma \times \beta) \otimes \beta + c_{33} (\gamma \times \beta) \otimes (\gamma \times \beta)\]
for some $\gamma \neq 0$ with $\gamma \perp \beta$ and $c_{12},\dots,c_{32} \in \R$. In particular, $(0,\beta, f \otimes \beta + \beta \otimes g) \in \Lambda$ for all $f,g \in \R^3$.
\end{lem}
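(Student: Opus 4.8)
The plan is to follow the proof of Lemma~\ref{Wave cone proposition 2} closely; the case $\alpha=0$ is in fact simpler because two of the four conditions in \eqref{Wave cone conditions} become vacuous. Setting $\alpha=0$ in \eqref{Wave cone conditions}, the triple $(0,\beta,M)$ belongs to $\Lambda$ precisely when there is a vector $\xi\neq 0$ with $\beta\cdot\xi=0$, $M\xi=0$, and $M^T\xi$ equal to a (possibly zero) scalar multiple of $\beta$. Since $\beta\neq 0$, for any such $\xi$ the vectors $\gamma\defeq\xi$, $\beta$ and $\gamma\times\beta$ are nonzero and pairwise orthogonal, so the nine tensor products of these three vectors form a basis of $\R^{3\times 3}$; I would expand $M=\sum_{i,j}c_{ij}e_i\otimes e_j$ with $(e_1,e_2,e_3)=(\gamma,\beta,\gamma\times\beta)$, which is exactly the basis used to state the lemma.

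First I would compute, using $(a\otimes b)\gamma=(b\cdot\gamma)a$ and the orthogonality relations, that $M\gamma=\abs{\gamma}^2(c_{11}\gamma+c_{21}\beta+c_{31}(\gamma\times\beta))$ and $M^T\gamma=\abs{\gamma}^2(c_{11}\gamma+c_{12}\beta+c_{13}(\gamma\times\beta))$. The requirement $M\gamma=0$ forces $c_{11}=c_{21}=c_{31}=0$, and the requirement that $M^T\gamma$ be parallel to $\beta$ additionally forces $c_{13}=0$ while leaving $c_{12}$ free; the coefficients $c_{12},c_{22},c_{23},c_{32},c_{33}$ remain unconstrained, which is exactly the asserted form of $M$. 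For the converse, given $M$ of that form and any admissible $\gamma$, I would simply check \eqref{Wave cone conditions} with $\xi=\gamma$ and $c=-c_{12}\abs{\gamma}^2$: indeed $M\gamma=0$ and $M^T\gamma=c_{12}\abs{\gamma}^2\beta$ follow at once from the expansion, and $\alpha\cdot\xi=0$, $\beta\cdot\xi=0$ hold by construction.

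For the ``in particular'' statement I need, given $f,g\in\R^3$, to produce a nonzero $\gamma\perp\beta$ making $f\otimes\beta+\beta\otimes g$ of the stated form. The summand $f\otimes\beta$ is automatically harmless: expanding $f$ in the basis $(\gamma,\beta,\gamma\times\beta)$ expresses $f\otimes\beta$ through $\gamma\otimes\beta$, $\beta\otimes\beta$ and $(\gamma\times\beta)\otimes\beta$, all of which are allowed. The only obstruction is that $\beta\otimes g$ must have no $\beta\otimes\gamma$ component, i.e.\ $g$ must lie in $\operatorname{span}\{\beta,\gamma\times\beta\}=\gamma^\perp$. So I would choose $\gamma\defeq\beta\times g$ when $\beta\times g\neq0$; the identity $(\beta\times g)\times\beta=\abs{\beta}^2 g-(\beta\cdot g)\beta$ then gives $\beta\otimes g=\abs{\beta}^{-2}\beta\otimes(\gamma\times\beta)+\abs{\beta}^{-2}(\beta\cdot g)\,\beta\otimes\beta$, which together with the expansion of $f\otimes\beta$ puts $f\otimes\beta+\beta\otimes g$ into the required form. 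If $\beta\times g=0$, then $\beta\otimes g$ is a scalar multiple of $\beta\otimes\beta$ and any unit vector $\gamma\perp\beta$ works.

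I do not anticipate a genuine obstacle: this is pure linear algebra and a near-verbatim repetition of Lemma~\ref{Wave cone proposition 2} with $\alpha=0$. The one point needing a little care is the choice of $\gamma$ in the last paragraph, which is the analogue of the case distinction on $\alpha\times(f-kg)$ in the proof of Lemma~\ref{Wave cone proposition 2}; everything else is bookkeeping with the orthogonal basis $(\gamma,\beta,\gamma\times\beta)$.
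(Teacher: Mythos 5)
Your argument is correct and follows essentially the same route as the paper: expand $M$ in the orthogonal basis built from $\gamma=\xi$, $\beta$, $\gamma\times\beta$, read off which coefficients are killed by $M\gamma=0$ and $M^T\gamma+c\beta=0$, and for the ``in particular'' part pick $\gamma$ orthogonal to both $\beta$ and $g$ (the paper handles this last step by referring to the $k=0$ case of Lemma~\ref{Wave cone proposition 2}, which is exactly the case analysis you carry out). The only quibble is the remark that ``two of the four conditions become vacuous'': with $\alpha=0$ only $\alpha\cdot\xi=0$ is vacuous, while $M\xi+c\alpha=0$ becomes the genuine condition $M\xi=0$, which you do in fact use.
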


\begin{proof}[Proof]
For the first claim let us denote $0 \neq \xi = \gamma \perp \beta$. We write $M = c_{11} \gamma \otimes \gamma + c_{12} \gamma \otimes \beta + \cdots + c_{32} (\gamma \times \beta) \otimes \beta + c_{33} (\gamma \times \beta) \otimes (\gamma \times \beta)$. Now \eqref{Wave cone conditions} is equivalent to
\begin{align*}
    M \gamma
&= c_{11} \abs{\gamma}^2 \gamma + c_{21} \abs{\gamma}^2 \beta + c_{31} \abs{\gamma}^2 \gamma \times \beta = 0, \\
    M^T \gamma + c \beta
&= c_{11} \abs{\gamma}^2 \gamma + c_{12} \abs{\gamma}^2 \beta + c_{13} \abs{\gamma}^2 \gamma \times \beta + c \beta = 0,
\end{align*}
proving the first claim of the lemma. The second claim is proved as in the case $k = 0$ of Lemma \ref{Wave cone proposition 2}.
\end{proof}

Whenever $\alpha \neq 0$ or $\beta \neq 0$, the conditions $(\alpha,\beta,M) \in \Lambda$ and $(\alpha,\beta,M) \in \Lambda_0$ are in fact equivalent. However, when $\alpha = \beta = 0$, belonging to $\Lambda$ is a much more restrictive condition.

\begin{lem} \label{Wave cone proposition 4}
If $M \in \R^{3 \times 3}$, we have $(0,0,M) \in \Lambda$ if and only if there exist orthonormal vectors $f_1,f_2 \in \R^3$ and coefficients $c_{11}, c_{12}, c_{21}, c_{22} \in \R$ such that $M = \sum_{i,j=1}^2 c_{ij} f_i \otimes f_j$.
\end{lem}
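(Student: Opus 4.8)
The plan is to specialize the wave cone conditions \eqref{Wave cone conditions} to the degenerate case $\alpha = \beta = 0$ and then rewrite the resulting linear-algebraic condition in a convenient orthonormal frame. Putting $\alpha = \beta = 0$, the last two equations in \eqref{Wave cone conditions} hold automatically and the scalar $c$ drops out, so $(0,0,M) \in \Lambda$ is equivalent to the existence of some $\xi \in \R^3 \setminus \{0\}$ with $M\xi = 0$ and $M^T \xi = 0$; that is, $\ker M \cap \ker M^T \neq \{0\}$, or equivalently $\ker M$ meets $(\operatorname{range} M)^\perp$.

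For the implication ``$\Leftarrow$'', I would assume $M = \sum_{i,j=1}^2 c_{ij}\, f_i \otimes f_j$ with $f_1, f_2$ orthonormal, and choose a unit vector $f_3$ orthogonal to $f_1$ and $f_2$. Since $f_j \cdot f_3 = 0$ for $j = 1,2$, both $M$ and $M^T = \sum_{i,j=1}^2 c_{ij}\, f_j \otimes f_i$ annihilate $f_3$, so $\xi = f_3$ satisfies \eqref{Wave cone conditions} with $\alpha = \beta = 0$ and hence $(0,0,M) \in \Lambda$.

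For ``$\Rightarrow$'', I would take $\xi \neq 0$ with $M\xi = M^T\xi = 0$, normalize to $\abs{\xi} = 1$, set $f_3 \defeq \xi$, and extend to an orthonormal basis $\{f_1, f_2, f_3\}$ of $\R^3$. Writing $m_{ij} \defeq f_i \cdot M f_j$ so that $M = \sum_{i,j=1}^3 m_{ij}\, f_i \otimes f_j$, the relation $M f_3 = 0$ forces $m_{i3} = 0$ for all $i$, and $M^T f_3 = 0$ gives $m_{3j} = f_3 \cdot M f_j = (M^T f_3) \cdot f_j = 0$ for all $j$. Hence $M = \sum_{i,j=1}^2 m_{ij}\, f_i \otimes f_j$, which is the asserted form with $c_{ij} = m_{ij}$.

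Both directions are elementary linear algebra, so I do not expect a genuine obstacle; the only point needing a moment's care is the transpose bookkeeping — one must observe that the rank-at-most-two block form of $M$ in the frame $\{f_1, f_2, f_3\}$ is precisely what makes $M$ and $M^T$ share the common null vector $f_3$, which is exactly what the two equations $M\xi + c\alpha = 0$ and $M^T\xi + c\beta = 0$ demand when $\alpha = \beta = 0$.
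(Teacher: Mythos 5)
Your proof is correct and follows essentially the same route as the paper: both reduce $(0,0,M)\in\Lambda$ to the existence of a nonzero $\xi$ with $M\xi = M^T\xi = 0$, complete $\xi/\abs{\xi}$ to an orthonormal basis, and read off the vanishing of the third row and column of $M$ in that frame, with the converse obtained by taking $\xi = f_1 \times f_2$. No issues.
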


\begin{proof}[Proof]
If $(0,0,M) \in \Lambda$, choose $(\xi,c) \in (\R^3 \setminus \{0\}) \times \R$ such that $M \xi = M^T \xi = 0$. Form an orthonormal basis $\{f_1,f_2,f_3\}$ of $\R^3$, where $f_3 = \xi/\abs{\xi}$, and write $M = \sum_{i,j=1}^3 c_{ij} f_i \otimes f_j$. Then $M \xi = \sum_{i=1}^3 c_{i,3} f_i = 0$ and $M^T \xi = \sum_{j=1}^3 c_{3,j} f_j = 0$ imply that $M = \sum_{i,j=1}^2 c_{ij} f_i \otimes f_j$. The converse is proved by setting $\xi = f_1 \times f_2$ and choosing any $c \in \R$.
\end{proof}

\subsection{The wave cone in 2D MHD}
In 2D MHD the $\Lambda$-convex hull $K^{\Lambda}$ has empty interior. However, $K^{\Lambda}$ is strictly larger than $K$ itself; these results are shown in Proposition \ref{Empty interior proposition}. We first compute the wave cone in this section.

Recall that the subset of the wave cone that we use is
\[\Lambda = \left\{ V \in \R^{3 \times 3}_0 \colon \exists \left[ \begin{array}{c}
                       \xi \\
                       c
                     \end{array} \right] \in (\R^2 \setminus \{0\}) \times \R \text{ such that } V \left[ \begin{array}{c}
                       \xi \\
                       c
                     \end{array} \right] = V^T \left[ \begin{array}{c}
                       \xi \\
                       c
                     \end{array} \right] = 0 \right\}.\]
If $\alpha \in \R^2 \setminus \{0\}$, we may write any $M \in \R^{2 \times 2}$ as a linear combination of four basis elements of $\R^{2 \times 2}$: $M = c_{11} \alpha \otimes \alpha + c_{12} \alpha \otimes \alpha^\perp + c_{21} \alpha^\perp \otimes \alpha + c_{22} \alpha^\perp \otimes \alpha^\perp$, where $\alpha^\perp = (-\alpha_2,\alpha_1)$. Now a necessary condition for $(\alpha,\beta,M) \in \Lambda$ is that $0 \neq \xi \perp \{\alpha,\beta\}$ so that $\beta = k \alpha$ with $k \in \R$. Lemma \ref{2D Wave cone proposition 1} is proved by an easy adaptation of the proofs of Lemmas \ref{Wave cone proposition 2}--\ref{Wave cone proposition 4}.

\begin{lem} \label{2D Wave cone proposition 1}
When $\alpha \neq 0$ and $k \in \R$, the triple $(\alpha,k \alpha,M) \in \Lambda$ if and only if
\[M = c_{11} \alpha \otimes \alpha + c_{12} [\alpha \otimes \alpha^\perp + k \alpha^\perp \otimes \alpha],\]
where $c_{11},c_{12} \in \R$. When $\beta \neq 0$, we have $(0,\beta,M) \in \Lambda$ if and only if
\[M = c_{11} \beta \otimes \beta + c_{21} \beta^\perp \otimes \beta,\]
where $c_{11},c_{12} \in \R$. Furthermore, $(0,0,M) \in \Lambda$ if and only if $M = \gamma \otimes \gamma$ for some $\gamma \in \R^2$.
\end{lem}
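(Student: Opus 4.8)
The plan is to mimic, \emph{mutatis mutandis}, the 3D computations of Lemmas \ref{Wave cone proposition 2}--\ref{Wave cone proposition 4}, which become shorter in the plane since the admissible direction $\xi$ now varies in a one-dimensional set. First I would unwind $(\alpha,\beta,M) \in \Lambda$ into the planar analogue of \eqref{Wave cone conditions}: writing $V = \left[\begin{smallmatrix} M & \alpha \\ \beta^T & 0\end{smallmatrix}\right]$ for the matrix associated with $(\alpha,\beta,M)$, the condition $V(\xi,c)^T = V^T(\xi,c)^T = 0$ is equivalent to
\[M\xi + c\alpha = 0, \quad M^T\xi + c\beta = 0, \quad \alpha \cdot \xi = 0, \quad \beta \cdot \xi = 0\]
for some $(\xi,c) \in (\R^2 \setminus \{0\}) \times \R$. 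The decisive planar remark, already made just before the statement, is that $0 \neq \xi \perp \{\alpha,\beta\}$ in $\R^2$ forces $\alpha$ and $\beta$ to be parallel; hence if $\alpha \neq 0$ then $\beta = k\alpha$ and $\xi$ is a nonzero multiple of $\alpha^\perp$, and if $\alpha = 0 \neq \beta$ then $\xi$ is a nonzero multiple of $\beta^\perp$. This produces exactly the three cases in the statement.

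In the case $\alpha \neq 0$ I would fix $\xi = \alpha^\perp$ and expand $M = c_{11}\alpha \otimes \alpha + c_{12}\alpha \otimes \alpha^\perp + c_{21}\alpha^\perp \otimes \alpha + c_{22}\alpha^\perp \otimes \alpha^\perp$ in the basis of $\R^{2\times 2}$ afforded by $\{\alpha,\alpha^\perp\}$ (in 2D this is a genuine basis for any $\alpha \neq 0$, so no analogue of the hypothesis $\alpha \times \beta \neq 0$ from Lemma \ref{Wave cone proposition 1} is needed). Using $(p \otimes q)r = (q \cdot r)p$ and $\alpha \cdot \alpha^\perp = 0$ one computes $M\alpha^\perp = c_{12}\abs{\alpha}^2 \alpha + c_{22}\abs{\alpha}^2 \alpha^\perp$ and $M^T\alpha^\perp = c_{21}\abs{\alpha}^2 \alpha + c_{22}\abs{\alpha}^2 \alpha^\perp$; substituting into the first two equations and using that $\alpha, \alpha^\perp$ are linearly independent forces $c_{22} = 0$, $c = -c_{12}\abs{\alpha}^2$ and $c_{21} = k c_{12}$, i.e. $M = c_{11}\alpha \otimes \alpha + c_{12}[\alpha \otimes \alpha^\perp + k\alpha^\perp \otimes \alpha]$. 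Conversely, for $M$ of this form one checks directly that $(\xi,c) = (\alpha^\perp, -c_{12}\abs{\alpha}^2)$ solves all four equations. The case $\alpha = 0 \neq \beta$ runs the same way with $\beta^\perp$ replacing $\alpha^\perp$: now $M\beta^\perp = 0$ forces $c_{12} = c_{22} = 0$ and then $M^T\beta^\perp + c\beta = 0$ gives $c = -c_{21}\abs{\beta}^2$, leaving $M = c_{11}\beta \otimes \beta + c_{21}\beta^\perp \otimes \beta$, and the converse is again a one-line verification. In the remaining case $\alpha = \beta = 0$ the last two equations are vacuous and it suffices to have $M\xi = M^T\xi = 0$ for some $\xi \neq 0$; picking an orthonormal basis $\{f_1,f_2\}$ of $\R^2$ with $f_2 = \xi/\abs{\xi}$ and writing $M = \sum_{i,j=1}^2 c_{ij}f_i \otimes f_j$, the relations $Mf_2 = M^Tf_2 = 0$ kill all coefficients except $c_{11}$, so $M = c_{11}f_1 \otimes f_1$, and conversely such an $M$ and its transpose annihilate $f_1^\perp$.

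I do not expect a genuine obstacle: the entire lemma is bookkeeping with a low-dimensional tensor basis, and every case is strictly simpler than its 3D counterpart in Lemmas \ref{Wave cone proposition 2}--\ref{Wave cone proposition 4}. Two small points deserve a careful look: (i) the legitimacy of the four-element tensor basis used whenever $\alpha \neq 0$ (which holds since $\{\alpha,\alpha^\perp\}$ is a basis of $\R^2$), and (ii) the exact form recorded in the last case — the computation returns $M = c_{11}f_1 \otimes f_1$ with $c_{11} \in \R$ of arbitrary sign, so the stated ``$M = \gamma \otimes \gamma$'' should be understood as $M = \pm\gamma \otimes \gamma$, equivalently $M$ symmetric of rank at most one.
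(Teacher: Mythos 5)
Your proof is correct and is precisely the ``easy adaptation of the proofs of Lemmas \ref{Wave cone proposition 2}--\ref{Wave cone proposition 4}'' that the paper invokes without writing out the details: restrict $\xi$ to the one-dimensional orthogonal complement of $\alpha$ (resp.\ $\beta$), expand $M$ in the tensor basis built from $\{\alpha,\alpha^\perp\}$, and read off the coefficients. Your remark on the final case is also a fair catch --- the computation returns $M = c_{11} f_1 \otimes f_1$ with $c_{11}$ of either sign, so the stated ``$M = \gamma \otimes \gamma$'' should be read as $M = \pm \gamma \otimes \gamma$ (equivalently, $M$ symmetric of rank at most one), a harmless imprecision since only the symmetry of $M$ is used when this case is applied in the proof of Proposition \ref{Empty interior proposition}.
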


\section{Proof of Theorem \ref{Mean-square magnetic potential preservation theorem}}
Theorem \ref{Mean-square magnetic potential preservation theorem} is proved in this section. In Proposition \ref{Empty interior proposition} we find a suitable $\Lambda$-convex function which gives us crucial information about the shape of the $\Lambda$-convex hull $K^\Lambda$, and in \textsection \ref{Conservation of the mean-square magnetic potential} we finish the proof of Theorem \ref{Mean-square magnetic potential preservation theorem}.

\subsection{Emptiness of the interior of the hull in 2D} \label{Emptiness of the interior of the hull in 2D}
The constraint set $K$ for 2D ideal MHD was defined in \eqref{Definition of K}. The following result on the $\Lambda$-convex hull of $K$ was the initial motivation behind Theorem \ref{Mean-square magnetic potential preservation theorem} and Corollary \ref{MHD 2D corollary}.

\begin{prop} \label{Empty interior proposition}
Whenever $(\alpha,\beta,\alpha \otimes \beta + N) \in K^\Lambda$, the matrix $N$ is symmetric. In particular, $K^\Lambda$ has empty interior. However, $K^\Lambda \setminus K \neq \emptyset$.
\end{prop}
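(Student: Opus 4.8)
The plan is to produce a single quadratic function on $\R^{3\times3}_0$ which is simultaneously $\Lambda$-convex and $\Lambda$-concave (i.e.\ $\Lambda$-affine) and which detects the antisymmetric part of $N$. Identifying $(\alpha,\beta,M)\in\R^2\times\R^2\times\R^{2\times2}$ with the corresponding element of $\R^{3\times3}_0$ as in \textsection\ref{The wave cone and the lamination convex hull}, I would set
\[
Q(\alpha,\beta,M)\defeq (M_{12}-M_{21})-(\alpha_1\beta_2-\alpha_2\beta_1).
\]
Since a $2\times2$ matrix $N$ is symmetric exactly when $N_{12}=N_{21}$, writing $M=\alpha\otimes\beta+N$ one has: $N$ symmetric $\iff M_{12}-M_{21}=\alpha_1\beta_2-\alpha_2\beta_1 \iff Q(\alpha,\beta,M)=0$. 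On the constraint set $K$ one has $M=\alpha\otimes\beta+\Pi I$ with $\Pi I$ symmetric, so $Q\equiv0$ on $K$. Once $Q$ is known to be $\Lambda$-affine, both $Q$ and $-Q$ are $\Lambda$-convex functions that are $\le0$ on $K$, hence $\le0$ — and therefore $=0$ — on $K^\Lambda$ by the very definition of the $\Lambda$-convex hull. This yields the first assertion.

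Next I would verify that $Q$ is $\Lambda$-affine, i.e.\ that $t\mapsto Q(V+tW)$ is affine for every $V\in\R^{3\times3}_0$ and every $W\in\Lambda$. Writing $V\leftrightarrow(\alpha,\beta,M)$ and $W\leftrightarrow(\alpha',\beta',M')$, the contribution $(M+tM')_{12}-(M+tM')_{21}$ is already affine in $t$, and the only possible obstruction is the $t^2$-coefficient coming from $-(\alpha+t\alpha')_1(\beta+t\beta')_2+(\alpha+t\alpha')_2(\beta+t\beta')_1$, namely $-(\alpha'_1\beta'_2-\alpha'_2\beta'_1)$. But if $W\in\Lambda$, the defining conditions of the wave cone (the two-dimensional analogue of \eqref{Wave cone conditions}) provide some $\xi\in\R^2\setminus\{0\}$ with $\xi\cdot\alpha'=\xi\cdot\beta'=0$; in $\R^2$ this forces $\alpha'$ and $\beta'$ to be collinear, so $\alpha'_1\beta'_2-\alpha'_2\beta'_1=0$ and the $t^2$-term vanishes. (Equivalently, Lemma \ref{2D Wave cone proposition 1} shows that every $W\in\Lambda$ has $z^\pm$-components of the form $(\alpha',k\alpha')$ or $(0,\beta')$.) Hence $Q$ is $\Lambda$-affine, completing the proof of the first statement.

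The second assertion is then immediate: $K^\Lambda\subseteq\{Q=0\}$, and $Q$ is a non-constant polynomial on $\R^{3\times3}_0\cong\R^8$ — it depends non-trivially on the entry $M_{12}$ — so its zero set has empty interior. For the third assertion I would exhibit one nontrivial $\Lambda$-segment lying inside $K$: take $P_1\defeq(e_1,e_1,e_1\otimes e_1)$ and $P_2\defeq(-e_1,-e_1,e_1\otimes e_1)$, both in $K$ with pressure $\Pi=0$ since $(\pm e_1)\otimes(\pm e_1)=e_1\otimes e_1$. Their difference $P_1-P_2=(2e_1,2e_1,0)$ lies in $\Lambda$ (the wave-cone conditions hold with $\xi=e_2$, $c=0$), so for any $\Lambda$-convex $f$ with $f|_K\le0$ the midpoint $P\defeq\tfrac12P_1+\tfrac12P_2=(0,0,e_1\otimes e_1)$ satisfies $f(P)\le\tfrac12f(P_1)+\tfrac12f(P_2)\le0$ by convexity of $f$ along the line through $P_1$ and $P_2$; hence $P\in K^{1,\Lambda}\subseteq K^\Lambda$. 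On the other hand $P\notin K$, because $e_1\otimes e_1$ is not a scalar multiple of $I$.

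The only step requiring genuine care is the $\Lambda$-affineness of $Q$ — more precisely, recognising that the quadratic obstruction $\alpha'\wedge\beta'$ automatically vanishes on the wave cone because of the orthogonality relations $\xi\cdot\alpha'=\xi\cdot\beta'=0$ built into its definition. The remaining points are routine bookkeeping (the symmetric-part computation and the null-set argument) together with the explicit two-point laminate.
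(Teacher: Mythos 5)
Your proposal is correct and follows essentially the same route as the paper: the separating quantity $(m_{12}-\alpha_1\beta_2)-(m_{21}-\alpha_2\beta_1)$ is exactly the one used there (the paper works with its square as a $\Lambda$-convex function, whereas you exploit the unsquared $\Lambda$-affine version via $\pm Q$, which is an equivalent observation since in both cases the point is that the quadratic obstruction $\alpha_1'\beta_2'-\alpha_2'\beta_1'$ vanishes on the $2$D wave cone by collinearity of $\alpha'$ and $\beta'$). Your two-point laminate for $K^\Lambda\setminus K\neq\emptyset$ is a different explicit example but the same construction as the paper's.
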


\begin{proof}
The idea of the proof of the emptiness statement is to construct a $\Lambda$-convex function $f$ such that $f$ vanishes on $K$ but $f(\alpha,\beta,M) > 0$ whenever $M - \alpha \otimes \beta$ is non-symmetric.

We define
\begin{equation} \label{Lambda convex function in 2D}
f(\alpha,\beta,M) \defeq |(m_{12} - \alpha_1 \beta_2) - (m_{21} - \alpha_2 \beta_1)|^2
\end{equation}
intending to show that when $V \defeq (\alpha,\beta,M) \in \R^2 \times \R^2 \times \R^{2 \times 2}$ and $W \defeq (\gamma, k \gamma, N) \in \Lambda$, the function
\[t \mapsto g(t) \defeq f(V+tW) \colon \R \to \R\]
is convex. (A similar proof works for triples of the form $(0,\gamma,N) \in \Lambda$.)

With $V$ and $W$ fixed and $t \in \R$, we get
\[\begin{array}{lcl}
& & g(t) - g(0) \\
&=& |(m_{12} + t n_{12} - (\alpha_1 + t \gamma_1)(\beta_2+tk \gamma_2)) \\
&-& (m_{21} + t n_{21} - (\alpha_2 + t \gamma_2)(\beta_1+tk \gamma_1))|^2 \\
&-& |(m_{12} - \alpha_1 \beta_2) - (m_{21} - \alpha_2 \beta_1)|^2 \\
&=& |(m_{12} - \alpha_1 \beta_2) - (m_{21} - \alpha_2 \beta_1) + dt|^2 - |(m_{12} - \alpha_1 \beta_2) - (m_{21} - \alpha_2 \beta_1)|^2 \\
&=& ct + |d t|^2,
  \end{array}\]
since the terms $\pm t^2 k \gamma_1 \gamma_2$ cancel out. Thus $g''(t) = 2 d^2 \ge 0$ for all $t$, so that $g$ is convex.

We then show that $K^\Lambda \setminus K \neq \emptyset$. When $\alpha, \beta, \gamma, \delta \in \R^2$, we have $( \alpha, \beta, \alpha \otimes \beta) - ( \gamma, \delta, \gamma \otimes \delta) \in \Lambda$ precisely when $\alpha, \beta, \gamma,$ and $\delta$ lie on the same line (see Lemma \ref{Restriction on next action} and its proof for an analogous statement in 3D). Now $\alpha^\pm = (\pm 3,1)$ and $\beta^\pm = (\pm 2,1)$ belong to the horizontal line through $(0,1)$ and $(\alpha^\pm,\beta^\pm,\alpha^\pm \otimes \beta^\pm) \in K$, yet
\[\frac{(\alpha^+,\beta^+,\alpha^+ \otimes \beta^+)}{2} + \frac{(\alpha^-,\beta^-,\alpha^- \otimes \beta^-)}{2} = \left( (0,1), (0,1), \left[ \begin{array}{cc}
                      6 & 0 \\
                      0 & 1
                    \end{array} \right]
 \right) \notin K.\]
The claim $K^\Lambda \setminus K \neq \emptyset$ also follows from the fact that the $\Lambda$-convex hull for Euler equations is non-trivial. Indeed, when $z^+ = z^-$ and $M$ is symmetric, the linearized MHD equations $\mathcal{L}(z^+,z^+,M) = 0$ (see \eqref{Definition of L}) reduce to the linearized Euler equations.
\end{proof}

When written in terms of $(u,b,S,A)$, the $\Lambda$-convex function in \eqref{Lambda convex function in 2D} is of the form $\tilde{f}(u,b,S,a) \defeq 4 \abs{a_{12} - b \times u}^2$ and Proposition \ref{Empty interior proposition} obtains the following form.

\begin{cor} \label{Subsolutions are solutions for b in 2D}
If $(u,b,S,A) \in K^\Lambda$, then $A = b \otimes u - u \otimes b$.
\end{cor}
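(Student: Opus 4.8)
The plan is to obtain Corollary~\ref{Subsolutions are solutions for b in 2D} as the $(u,b,S,A)$-language restatement of Proposition~\ref{Empty interior proposition}. Recall the change of variables $z^+ = u+b$, $z^- = u-b$, $M = S+A$, which gives a linear isomorphism $\Phi\colon \R^2\times\R^2\times\mathcal{S}^{2\times2}\times\mathcal{A}^{2\times2}\to\R^2\times\R^2\times\R^{2\times2}$, $\Phi(u,b,S,A)=(z^+,z^-,M)$, with inverse recovering $S=(M+M^T)/2$, $A=(M-M^T)/2$, $u=(z^++z^-)/2$, $b=(z^+-z^-)/2$. First I would check that $\Phi$ carries the constraint set $K$ (in the $(u,b,S,A)$-form from the Introduction) onto the Els\"asser constraint set of \eqref{Definition of K}, and that it carries the wave cone $\Lambda$ onto $\Lambda$: indeed, for a plane wave in direction $(\xi,c)$, adding the linearized equations $\partial_t u+\dive S=0$ and $\partial_t b+\dive A=0$ produces $\partial_t z^++\dive M=0$, subtracting them produces $\partial_t z^-+\dive M^T=0$, and $\dive u=\dive b=0$ is equivalent to $\dive z^\pm=0$, so the two systems of wave-cone conditions are equivalent.

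Granting this, $\Phi$ is a linear bijection preserving both $K$ and $\Lambda$, hence $g\mapsto g\circ\Phi$ sends $\Lambda$-convex functions to $\Lambda$-convex functions and $\Phi$ maps the hull $K^\Lambda$ in $(u,b,S,A)$-form onto $K^\Lambda$ in Els\"asser form. In particular the function $\tilde f(u,b,S,A)\defeq4\abs{a_{12}-b\times u}^2$ from the paragraph preceding the corollary is precisely $f\circ\Phi$ for $f$ as in \eqref{Lambda convex function in 2D}; this is a one-line computation using $s_{12}=s_{21}$ and $a_{21}=-a_{12}$ to see that $(m_{12}-\alpha_1\beta_2)-(m_{21}-\alpha_2\beta_1)=2(a_{12}-b\times u)$, and it shows in particular that $\tilde f$ is $\Lambda$-convex. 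Since $a_{12}=b\times u=(b\otimes u-u\otimes b)_{12}$ on $K$, we have $\tilde f|_K=0$, hence $\tilde f\le0$ on $K^\Lambda$, i.e.\ $\tilde f\equiv0$ there.

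It remains only to read off the conclusion: for $(u,b,S,A)\in K^\Lambda$ the identity $\tilde f(u,b,S,A)=0$ forces $a_{12}=b\times u=b_1u_2-b_2u_1$, and since $A$ is antisymmetric ($a_{11}=a_{22}=0$, $a_{21}=-a_{12}$) these entries coincide with those of $b\otimes u-u\otimes b$, so $A=b\otimes u-u\otimes b$. (Alternatively, one applies Proposition~\ref{Empty interior proposition} directly: it says the antisymmetric part of $M=S+A$, which is $A$, equals the antisymmetric part of $z^+\otimes z^-=(u+b)\otimes(u-b)$, which is $b\otimes u-u\otimes b$.) I do not anticipate a genuine obstacle; the only point requiring care is the first step, verifying that the linear change of variables $\Phi$ really does preserve the wave cone and the constraint set, since everything afterwards is bookkeeping and the corollary is then a direct transcription of Proposition~\ref{Empty interior proposition}.
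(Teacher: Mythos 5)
Your proposal is correct and follows essentially the same route as the paper: the paper obtains the corollary precisely as the $(u,b,S,A)$-translation of Proposition \ref{Empty interior proposition}, noting that the $\Lambda$-convex function \eqref{Lambda convex function in 2D} becomes $\tilde f(u,b,S,a)=4\abs{a_{12}-b\times u}^2$ under the Els\"asser change of variables. Your additional verification that the linear isomorphism preserves $K$ and $\Lambda$, and the identity $(m_{12}-\alpha_1\beta_2)-(m_{21}-\alpha_2\beta_1)=2(a_{12}-b\times u)$, are exactly the bookkeeping the paper leaves implicit.
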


By Corollary \ref{Subsolutions are solutions for b in 2D}, the assumptions of Theorem \ref{Mean-square magnetic potential preservation theorem} imply that \eqref{MHD2} holds. Thus Theorem \ref{Mean-square magnetic potential preservation theorem} will be proved once we show that the mean-square magnetic potential is conserved by solutions of \eqref{MHD2}--\eqref{MHD5}.

\subsection{Conservation of the mean-square magnetic potential} \label{Conservation of the mean-square magnetic potential}
The aim of this section is to finish the proof of Theorem \ref{Mean-square magnetic potential preservation theorem}; our proof is reminiscent of that of \cite[Theorem 1.4]{IV}. We use the $\mathcal{H}^1$ regularity theory of Coifman, Lions, Meyer and Semmes from \cite{CLMS}, more precisely the following adaptation of the classical Wente inequality to the torus $\T^2$ (see \cite[Theorem A.1]{FMS}).
\begin{lem} \label{Jacobian estimate}
When $(f_1,f_2,f_3) \in W^{1,2}(\T^2,\R^3)$, we have
\begin{align} \label{Jacobian inequality}
\int_{\T^2} f_1(x) J_{(f_2,f_3)}(x) \, dx
&\lesssim \norm{f_1}_{\operatorname{BMO}(\T^2)} \norm{J_{(f_2,f_3)}}_{\mathcal{H}^1(\T^2)} \notag \\
&\lesssim \norm{\nabla f_1}_{L^2(\T^2)} \norm{\nabla f_2}_{L^2(\T^2)} \norm{\nabla f_3}_{L^2(\T^2)}.
\end{align}
\end{lem}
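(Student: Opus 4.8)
The plan is to prove the two estimates in \eqref{Jacobian inequality} separately. The first one is nothing but the $\mathcal{H}^1$--$\operatorname{BMO}$ duality of Fefferman (\cite{FS}), applied on the torus: for smooth data the pairing $\int_{\T^2} f_1 J_{(f_2,f_3)}$ is an honest integral bounded by $C\norm{f_1}_{\operatorname{BMO}(\T^2)}\norm{J_{(f_2,f_3)}}_{\mathcal{H}^1(\T^2)}$, and the general $W^{1,2}$ case is reached at the very end by density of $C^\infty(\T^2)$ in $W^{1,2}(\T^2)$, using that $(f_2,f_3) \mapsto J_{(f_2,f_3)}$ is continuous from $W^{1,2} \times W^{1,2}$ into $\mathcal{H}^1(\T^2)$ (by the CLMS bound below, applied to differences via $J_{(f_2,f_3)} - J_{(g_2,g_3)} = J_{(f_2-g_2,f_3)} + J_{(g_2,f_3-g_3)}$) and that $f_1 \mapsto f_1$ is continuous from $W^{1,2}$ into $\operatorname{BMO}(\T^2)$; for merely $W^{1,2}$ data the quantity $\int_{\T^2} f_1 J_{(f_2,f_3)}$ is then interpreted, whenever necessary, as this continuous extension, i.e.\ as the duality bracket.

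For the second estimate I would bound the two factors independently. That $\norm{f_1}_{\operatorname{BMO}(\T^2)} \lesssim \norm{\nabla f_1}_{L^2(\T^2)}$ is the critical Sobolev embedding $W^{1,2}(\T^2) \hookrightarrow \operatorname{BMO}(\T^2)$: the $\operatorname{BMO}$ seminorm is invariant under adding constants, so we may assume $\int_{\T^2} f_1 = 0$; then for every ball $B = B(x,r)$ the Poincar\'e inequality on $B$ together with H\"older's inequality gives
\[\frac{1}{\abs{B}} \int_B \Bigl| f_1 - \frac{1}{\abs{B}}\int_B f_1 \Bigr| \, dy \le \frac{Cr}{\abs{B}}\int_B \abs{\nabla f_1} \le C r \abs{B}^{-1/2} \norm{\nabla f_1}_{L^2(B)} = C' \norm{\nabla f_1}_{L^2(\T^2)},\]
where the cancellation of the powers of $r$ (using $\abs{B} \sim r^2$) is exactly the borderline case $n = 2$; taking the supremum over $B$ gives the claim. (If one uses instead the ``restricted'' space $\operatorname{BMO}_r$, one additionally bounds $\abs{B}^{-1}\int_B \abs{f_1}$ for balls of size $\sim 1$ by $\norm{f_1}_{W^{1,2}}$, again by Poincar\'e.)

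The heart of the matter is the Coifman--Lions--Meyer--Semmes estimate $\norm{J_{(f_2,f_3)}}_{\mathcal{H}^1(\T^2)} \lesssim \norm{\nabla f_2}_{L^2(\T^2)}\norm{\nabla f_3}_{L^2(\T^2)}$. The structural point is that $J_{(f_2,f_3)} = \partial_1 f_2\,\partial_2 f_3 - \partial_2 f_2\,\partial_1 f_3 = (\partial_2 f_3,-\partial_1 f_3) \cdot \nabla f_2$ is a div--curl quantity, since $\dive(\partial_2 f_3,-\partial_1 f_3) = 0$ and $\curl \nabla f_2 = 0$. I would run the CLMS argument directly on $\T^2$ via the grand maximal characterization of $\mathcal{H}^1(\T^2)$: writing $J_{(f_2,f_3)} = \partial_1(f_2 \partial_2 f_3) - \partial_2(f_2 \partial_1 f_3)$ and testing against a smooth bump $\varphi_t(x-\cdot)$ supported in $B(x,t)$ with $\int \varphi_t = 1$, one integrates by parts to move the derivative onto $\varphi_t$, subtracts the mean of $f_2$ over $B(x,t)$ (permissible since $\int_{\T^2} \partial_i \varphi_t = 0$), and obtains the pointwise bound $\abs{(\varphi_t * J_{(f_2,f_3)})(x)} \lesssim M(\nabla f_2)(x)\, M(\nabla f_3)(x)$ with $M$ the Hardy--Littlewood maximal operator; hence, by Cauchy--Schwarz and the $L^2$ bound for $M$,
\[\norm{J_{(f_2,f_3)}}_{\mathcal{H}^1(\T^2)} \lesssim \norm{M(\nabla f_2)\, M(\nabla f_3)}_{L^1} \le \norm{M(\nabla f_2)}_{L^2}\norm{M(\nabla f_3)}_{L^2} \lesssim \norm{\nabla f_2}_{L^2}\norm{\nabla f_3}_{L^2}.\]
Alternatively, one can transfer the Euclidean statement of \cite{CLMS} to the torus by a partition of unity and periodization. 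Either way this is exactly \cite[Theorem A.1]{FMS}.

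The main obstacle is this last estimate together with the bookkeeping of function spaces: one must fix compatible definitions of $\mathcal{H}^1(\T^2)$ and $\operatorname{BMO}(\T^2)$ (global versus the local/restricted variants $\mathcal{H}^1_z$, $\operatorname{BMO}_r$ appearing elsewhere in the paper), note that $J_{(f_2,f_3)}$ has zero mean on $\T^2$ --- being $\partial_1(f_2\partial_2 f_3) - \partial_2(f_2 \partial_1 f_3)$ --- so that it lies in the Hardy space in the first place, and justify the duality interpretation for $W^{1,2}$ data via the density argument above rather than claiming $f_1 J_{(f_2,f_3)} \in L^1$ outright (which fails in general for a product of a $\operatorname{BMO}$ and an $\mathcal{H}^1$ function). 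Once the div--curl decomposition is in hand the pointwise maximal-function bound is routine, so the genuine work is packaging it into the periodic Hardy space.
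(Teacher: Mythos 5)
Your proposal is correct, and it is worth noting that the paper itself offers no proof of this lemma at all: it is stated as an adaptation of the Wente inequality to the torus and justified solely by the citation of \cite[Theorem A.1]{FMS}, with the surrounding text only adding the remark that the left-hand side will be used in situations where it is an honest Lebesgue integral. Your sketch therefore supplies what the paper delegates to the literature, and the three ingredients you isolate --- Fefferman's $\mathcal{H}^1$--$\operatorname{BMO}$ duality, the critical embedding $W^{1,2}(\T^2) \hookrightarrow \operatorname{BMO}(\T^2)$ via Poincar\'e on balls, and the CLMS div--curl bound for the Jacobian --- are exactly the right ones, as is your observation that $J_{(f_2,f_3)} = \partial_1(f_2\partial_2 f_3) - \partial_2(f_2\partial_1 f_3)$ has zero mean and that the pairing must be read as a duality bracket for general $W^{1,2}$ data. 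One small technical imprecision: in the grand-maximal-function step the integration by parts plus Poincar\'e--Sobolev does not directly yield the pointwise bound $\abs{\varphi_t * J_{(f_2,f_3)}(x)} \lesssim M(\nabla f_2)(x)\,M(\nabla f_3)(x)$ with the first-power maximal functions; the standard CLMS argument produces $\bigl(M(\abs{\nabla f_2}^q)(x)\bigr)^{1/q}\bigl(M(\abs{\nabla f_3}^{q})(x)\bigr)^{1/q}$ for a suitable exponent $q < 2$, after applying H\"older and the Sobolev--Poincar\'e inequality on the ball $B(x,t)$. Since $M$ is bounded on $L^{2/q}$ for $q<2$, the ensuing Cauchy--Schwarz step closes exactly as you wrote it, so the conclusion is unaffected; but as stated your pointwise inequality is not what the argument delivers. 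With that adjustment your proof is complete and matches the content of the cited reference.
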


The left-hand side of \eqref{Jacobian inequality} can be understood in terms of $\mathcal{H}^1$--$\text{BMO}$ duality, but we will in fact use estimate \eqref{Jacobian inequality} only in cases where the left-hand side is Lebesgue integrable. For the proof of Theorem \ref{Mean-square magnetic potential preservation theorem} we fix a mollifier $\chi \in C_c^\infty(\T^2 \times \R)$ as in \textsection \ref{Stream functions and vector potentials}. Since $\chi$ is even, we have $\int_\epsilon^{T-\epsilon} \int_{\T^2} f(x,t) g_\delta(x,t) \, dx \, dt = \int_\epsilon^{T-\epsilon} \int_{\T^2} f_\delta(x,t) g(x,t) \, dx \, dt$ for all $f \in L^1(\T^3 \times ]0,T[; \R^3)$ and $g \in L^\infty(\T^3 \times ]0,T[; \R^3)$ whenever $0 < \delta < \epsilon < T-\epsilon$.

\begin{proof}[Proof of Theorem \ref{Mean-square magnetic potential preservation theorem}]
Since $\Psi \in C([0,T[;L^2(\T^2))$, it suffices to show that
\begin{equation} \label{Conservation of norm}
\int_0^T \partial_t \eta(t) \int_{\T^2} \abs{\Psi(x,t)}^2 dx \, dt = 0
\end{equation}
for every $\eta \in C_c^\infty(]0,T[)$. We fix $\eta$ and choose $\epsilon > 0$ such that $\supp(\eta) \subset [2\epsilon,T-2\epsilon]$. Now $\norm{\Psi - \Psi_\delta}_{L^2(\T^2 \times (\epsilon,T-\epsilon))} \to 0$ and $\norm{\nabla \Psi - \nabla \Psi_\delta}_{L^2(\T^2 \times (\epsilon,T-\epsilon))} \to 0$ as $\delta \searrow 0$, and so \eqref{Time evolution of psi in 2D} yields
\begin{align*}
    \int_\epsilon^{T-\epsilon} \partial_t \eta(t) \int_{\T^2} \abs{\Psi(x,t)}^2 dx \, dt
&= \lim_{\delta \searrow 0} \int_\epsilon^{T-\epsilon} \partial_t \eta(t) \int_{\T^2} \abs{\Psi_\delta(x,t)}^2 dx \, dt \\
&= 2 \lim_{\delta \searrow 0} \int_\epsilon^{T-\epsilon} \eta(t) \int_{\T^2} \Psi_\delta(x,t) [J_{(\Psi,\Phi)}]_\delta(x,t) \, dx \, dt.
\end{align*}

When $\delta > 0$ is small, we write
\[[J_{(\Psi,\Phi)}]_\delta = [J_{(\Psi - \Psi_\delta,\Phi)}]_\delta
+ ([J_{(\Psi_\delta,\Phi)}]_\delta - J_{(\Psi_\delta,\Phi)}) + J_{(\Psi_\delta,\Phi)}\]
and estimate the resulting integrals separately. First, we use Lemma \ref{Jacobian estimate}, H\"older's inequality and Young's integral inequality to estimate
\begin{align*}
&\hspace{0.478cm} \abs{ \int_\epsilon^{T-\epsilon} \eta(t) \int_{\T^2} \Psi_\delta(x,t) [J_{(\Psi-\Psi_\delta,\Phi)}]_\delta(x,t) \, dx \, dt } \\
&= \abs{ \int_\epsilon^{T-\epsilon} \int_{\T^2} [\eta \Psi_\delta]_\delta(x,t) J_{(\Psi-\Psi_\delta,\Phi)}(x,t) \, dx \, dt } \\
&\lesssim \int_\epsilon^{T-\epsilon} \norm{[\eta \nabla \Psi_\delta]_\delta(\cdot,t)}_{L^2(\T^2)} \norm{\nabla (\Psi-\Psi_\delta)(\cdot,t)}_{L^2(\T^2)} \norm{\nabla \Phi(\cdot,t)}_{L^2(\T^2)} \, dt \\
&\le \norm{[\eta \nabla \Psi_\delta]_\delta}_{L^2(\T^2 \times ]\epsilon,T-\epsilon[)} \norm{\nabla (\Psi-\Psi_\delta)}_{L^2(\T^2 \times ]\epsilon,T-\epsilon[)} \norm{\nabla \Phi}_{L^\infty_t L^2_x(\T^2 \times ]\epsilon,
T-\epsilon[)} \\
&\lesssim \norm{\eta}_{L^\infty} \norm{\nabla \Psi}_{L^2(\T^2 \times ]\epsilon,T-\epsilon[)} \norm{\nabla (\Psi-\Psi_\delta)}_{L^2(\T^2 \times ]\epsilon,T-\epsilon[)} \norm{\nabla \Phi}_{L^\infty_t L^2_x(\T^2 \times ]\epsilon,T-\epsilon[)} \\
&\to 0
\end{align*}
as $\delta \searrow 0$. Similarly,
\begin{align*}
&\hspace{0.478cm} \abs{ \int_\epsilon^{T-\epsilon} \eta(t) \int_{\T^2} \Psi_\delta(x,t) ([J_{(\Psi_\delta,\Phi)}]_\delta - J_{(\Psi_\delta,\Phi)})(x,t) \, dx \, dt } \\
&= \abs{ \int_\epsilon^{T-\epsilon} \int_{\T^2} ([\eta \Psi_\delta]_\delta(x,t) - [\eta \Psi_\delta](x,t)) J_{(\Psi_\delta,\Phi)}(x,t) \, dx \, dt } \\
&\lesssim \int_\epsilon^{T-\epsilon} \norm{[\eta \nabla \Psi_\delta]_\delta(\cdot,t) - \eta(t) \nabla \Psi_\delta(\cdot,t)}_{L^2(\T^2)} \norm{\nabla \Psi_\delta(\cdot,t)}_{L^2(\T^2)} \norm{\nabla \Phi(\cdot,t)}_{L^2(\T^2)} \, dt \\
&\lesssim \norm{[\eta \nabla \Psi_\delta]_\delta - \eta \nabla \Psi_\delta}_{L^2(\T^2 \times ]\epsilon,T-\epsilon[)} \norm{\nabla \Psi_\delta}_{L^2(\T^2 \times ]\epsilon,T-\epsilon[)} \norm{\nabla \Phi}_{L^\infty_t L^2_x(\T^2 \times ]\epsilon,T-\epsilon[)}
\end{align*}
and we get
\begin{align*}
&   \norm{[\eta \nabla \Psi_\delta]_\delta - \eta \nabla \Psi_\delta}_{L^2(\T^2 \times ]\epsilon,T-\epsilon[)} \\
\le & \norm{[\eta \nabla \Psi_\delta - \eta \nabla \Psi]_\delta}_{L^2(\T^2 \times ]\epsilon,T-\epsilon[)}
+ \norm{[\eta \nabla \Psi]_\delta - \eta \nabla \Psi}_{L^2(\T^2 \times ]\epsilon,T-\epsilon[)} \\
+ & \norm{\eta \nabla (\Psi - \Psi_\delta)}_{L^2(\T^2 \times ]\epsilon,T-\epsilon[)} \\
\to & \; 0
  \end{align*}
as $\delta \searrow 0$. Finally,
\[\begin{array}{lcl}
& & \displaystyle \int_\epsilon^{T-\epsilon} \eta(t) \int_{\T^2} \Psi_\delta(x,t) J_{(\Psi_\delta,\Phi)}(x,t) \, dx \, dt \\
&=& \displaystyle - \int_\epsilon^{T-\epsilon} \eta(t) \int_{\T^2} \Phi(x,t) J_{(\Psi_\delta,\Psi_\delta)}(x,t) \, dx \, dt
= 0
\end{array}\]
for every $\delta > 0$, finishing the proof of \eqref{Conservation of norm}.
\end{proof}

It is natural to ask whether an analogue of Theorem \ref{Mean-square magnetic potential preservation theorem} holds in the whole space $\R^2$. However, square integrable divergence-free vector fields do not in general have a square integrable stream function in $\R^2$, a fact that has sometimes been overlooked in the literature. The following simple proposition quantifies this phenomenon in terms of Baire category and shows that the natural analogue of Lemma \ref{Stream function lemma} in $\R^2$ is false.

\begin{prop}
The set $\{\nabla^\perp \Psi \colon \Psi \in W^{1,2}(\R^2)\}$ is of the first Baire category in $\{v \in L^2(\R^2; \R^2) \colon \dive v = 0\}$.
\end{prop}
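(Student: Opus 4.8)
The plan is to realise the set in question as the range of a bounded injective linear operator between Banach spaces, and then to play this against the failure of the Poincar\'{e} inequality on $\R^2$ by means of the open mapping theorem.

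First I would set $X \defeq \{v \in L^2(\R^2;\R^2) \colon \dive v = 0\}$, which is a closed subspace of $L^2(\R^2;\R^2)$ --- the defining condition $\int_{\R^2} v \cdot \nabla \varphi = 0$ for all $\varphi \in C_c^\infty(\R^2)$ is stable under $L^2$-convergence --- and hence a Banach space. Consider the operator $T \colon W^{1,2}(\R^2) \to X$, $T\Psi \defeq \nabla^\perp \Psi$. It is well defined into $X$ since $\dive \nabla^\perp \Psi = 0$, it is bounded because $\norm{T\Psi}_{L^2} = \norm{\nabla \Psi}_{L^2} \le \norm{\Psi}_{W^{1,2}}$, and it is injective because $\nabla^\perp \Psi = 0$ forces $\Psi$ to be constant and the only constant lying in $W^{1,2}(\R^2)$ is $0$. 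The set appearing in the proposition is exactly $Y \defeq T(W^{1,2}(\R^2))$.

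Next I would argue by contradiction: suppose $Y$ is \emph{not} of the first category in $X$. Since $Y = \bigcup_{n \in \N} T(\overline{B_{W^{1,2}}(0,n)})$, one of these countably many sets must fail to be meager; its closure is then a closed set that is not meager (being a superset of a non-meager set), hence has nonempty interior, and is moreover convex and symmetric (being the closure of a convex symmetric set), so it contains a ball centred at the origin. From here the standard iteration in the proof of the open mapping theorem --- which uses the completeness of $W^{1,2}(\R^2)$ --- shows that $T$ is surjective and open; being in addition a bounded linear injection between Banach spaces, $T$ is then an isomorphism onto $X$. In particular there would exist $C > 0$ with $\norm{\Psi}_{L^2(\R^2)} \le \norm{\Psi}_{W^{1,2}(\R^2)} \le C \norm{\nabla \Psi}_{L^2(\R^2)}$ for every $\Psi \in W^{1,2}(\R^2)$. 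This is refuted by scaling: for fixed $0 \neq \Psi_0 \in C_c^\infty(\R^2)$ and $\Psi_\lambda(x) \defeq \Psi_0(\lambda x)$ one has $\norm{\Psi_\lambda}_{L^2(\R^2)} = \lambda^{-1}\norm{\Psi_0}_{L^2(\R^2)}$ while $\norm{\nabla \Psi_\lambda}_{L^2(\R^2)} = \norm{\nabla \Psi_0}_{L^2(\R^2)}$, so the inequality fails for $\lambda$ small. Hence $Y$ is of the first category in $X$.

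The only point requiring care is the step above: one should feed the \emph{convex, symmetric} sets $T(\overline{B_{W^{1,2}}(0,n)})$ and their closures into the Baire/open-mapping machinery, rather than the non-convex sets $\{\nabla^\perp \Psi \colon \norm{\Psi}_{L^2} \le n \norm{\nabla \Psi}_{L^2}\}$ which also exhaust $Y$ but for which the usual convexity argument producing a ball around the origin breaks down. Everything else --- closedness of $X$, boundedness and injectivity of $T$, and the scaling contradiction --- is routine; one may even invoke directly the strong form of the open mapping theorem, that a continuous linear map between Banach spaces with non-meager range is automatically surjective and open.
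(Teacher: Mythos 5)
Your proof is correct, but it takes a genuinely different route from the paper's. The paper also writes the set as $\bigcup_k \{\nabla^\perp\Psi \colon \norm{\Psi}_{W^{1,2}} \le k\}$ and reduces, by symmetry and convexity, to showing that no ball centred at the origin is contained in any of these closed sets; but it then concludes by a direct construction: it picks a $\Theta$ in the homogeneous space $\dot W^{1,2}(\R^2)$ with $\Theta + C \notin W^{1,2}(\R^2)$ for every constant $C$, and observes that no nonzero multiple of $v = \nabla^\perp\Theta \in L^2$ can equal $\nabla^\perp\Psi$ for $\Psi \in W^{1,2}$, since two stream functions of the same field differ by a constant. You instead run the soft functional-analytic machine: the strong open mapping theorem (non-meager range of a bounded linear map from a Banach space forces surjectivity and openness) applied to $T\Psi = \nabla^\perp\Psi$, which, combined with injectivity, would yield the global Poincar\'{e}-type bound $\norm{\Psi}_{L^2(\R^2)} \le C\norm{\nabla\Psi}_{L^2(\R^2)}$; this is then killed by the scale invariance of $\norm{\nabla\Psi}_{L^2}$ in two dimensions. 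The two arguments hinge on the same underlying fact --- that $\dot W^{1,2}(\R^2)$ modulo constants is strictly larger than $W^{1,2}(\R^2)$, equivalently that the Poincar\'{e} inequality fails on $\R^2$ --- but your version makes the failure completely transparent via scaling and avoids having to exhibit (or invoke the existence of) a specific $\Theta \in \dot W^{1,2} \setminus (W^{1,2} + \R)$, at the price of invoking the heavier open mapping theorem; the paper's version is more elementary and hands-on, and shows directly which divergence-free fields obstruct the containment of a ball. Your cautionary remark about feeding the convex symmetric sets $T(\overline{B_{W^{1,2}}(0,n)})$ into the Baire argument is well taken and is exactly the convexity/symmetry step the paper also uses implicitly when it reduces to balls centred at the origin.
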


\begin{proof}
We write $\{\nabla^\perp \Psi \colon \Psi \in W^{1,2}(\R^2)\} = \cup_{k=1}^\infty \{\nabla^\perp \Psi \colon \norm{\Psi}_{W^{1,2}} \le k\}$ intending to prove that each of the closed sets $\{\nabla^\perp \Psi \colon \norm{\Psi}_{W^{1,2}} \le k\}$ has empty interior. Fix $k \in \N$. By the linearity of $\nabla^\perp$, it suffices to show that $\{\nabla^\perp \Psi \colon \norm{\Psi}_{W^{1,2}} \le k\}$ does not contain a ball centered at the origin.

Choose $\Theta \in \dot{W}^{1,2}(\R^2)$ such that $\Theta + C \notin W^{1,2}(\R^2)$ for every $C \in \R$, and denote $v \defeq \nabla^\perp \Theta \in L^2(\R^2;\R^2)$. Seeking contradiction, suppose there exist $\Psi \in W^{1,2}(\R^2)$ and $c \neq 0$ such that $\norm{\Psi}_{W^{1,2}} \le k$ and $\nabla^\perp \Psi = cv$. Then $\nabla^\perp (c\Theta-\Psi) = 0$ and so for some $C \in \R$ we have $\Theta + C = \Psi/c \in W^{1,2}(\R^2)$, which yields the desired contradiction.
\end{proof}

\begin{rem} \label{Remark on weak limits of solutions}
Mean-square magnetic potential is also conserved by weak $L^2$ limits of sequences of solutions that are bounded in $L^\infty_t L^2_x(\T^2 \times [0,T[;\R^2)$; this boils down to the fact that the induction equation $\partial_t b + \dive (b \otimes u - u \otimes b) = 0$ is weakly compact. Indeed, suppose $b^j,u^j \in C_w([0,T[;L^2(\T^2;\R^2))$ solve \eqref{MHD2}--\eqref{MHD5} for every $j\in \mathbb{N}$ with $\sup_{j \in \N} (\|u^j\|_{L^\infty_t L^2_x} + \|b^j\|_{L^\infty_t L^2_x}) < \infty$ and assume $b^j \rightharpoonup b$, $u^j \rightharpoonup u$ in $L^2_t L^2_x(\T^2 \times [0,T[;\R^2)$. Up to passing to a subsequence, the initial datas $b^j_0$ have a weak limit $b_0$ in $L^2(\T^2;\R^2)$. Furthermore, by Lemma \ref{Time evolution lemma for Psi}, $\sup_{j \in \N} \|\partial_t \Psi^j\|_{L^2_t L^1_x} < \infty$, giving $\Psi^j \to \Psi$ in $L^2_t L^2_x(\T^2 \times [0,T[)$ by the Aubin-Lions-Simon Lemma (for a version that suffices for us see e.g. \cite[Lemma 7.7]{Rou}). Now $b^j \times u^j = J(\Psi^j,\Phi^j) \to J(\Psi,\Phi) = b \times u$ in $\mathcal{D}'(\T^2 \times ]0,T[)$, yielding $\partial_t b + \dive (b \otimes u - u \otimes b) = 0$ with $b(\cdot,0) = b_0$. Furthermore, $b,u \in L^\infty_t L^2_x(\T^2 \times [0,T[;\R^2)$, and so again, by a modification of \cite[Lemmas 2.2 and 2.4]{Gal}, $b \in C_w([0,T[;L^2(\T^2;\R^2))$. Now $b$ and $u$ essentially satisfy the assumptions of Theorem \ref{Mean-square magnetic potential preservation theorem} (the weak continuity of $u$ in time is not needed for the conclusion to hold) and so they conserve mean-square magnetic potential in time.
\end{rem}

\section{Proof of Theorem \ref{Magnetic helicity preservation theorem}}

Theorem \ref{Magnetic helicity preservation theorem} gives an analogue of Theorem \ref{Mean-square magnetic potential preservation theorem} in 3D, and the proof is presented in this section. In \textsection \ref{Emptiness of the interior of the hull in 3D} we find a $\Lambda$-affine function which shows that the $\Lambda$-convex hull $K^\Lambda$ has empty interior, and the proof of Theorem \ref{Magnetic helicity preservation theorem} is finished in \textsection \ref{Conservation of magnetic helicity by subsolutions}. Theorem \ref{Magnetic helicity preservation theorem} does not, however, immediately rule out compactly supported convex integration solutions of 3D MHD with $b \not\equiv 0$.

\subsection{Emptiness of the interior of the hull in 3D} \label{Emptiness of the interior of the hull in 3D}
The main aim of this subsection is the construction of a suitable $\Lambda$-affine function. The function gets a much more intuitive form in the formalism with velocity field $u$ and magnetic field $b$ rather than the Els\"{a}sser variables $z^\pm$. Recall from the Introduction that the linearized MHD equations can be written in terms of $(u,b,S,A)$ as
\[\dive_{x,t} \left[ \begin{array}{cc}
S & u \\
u^T & 0
  \end{array} \right] = 0, \qquad
\dive_{x,t} \left[ \begin{array}{cc}
                     A & b \\
                     -b^T & 0
                   \end{array} \right] = 0,\]
so that linearized MHD is divided into the 'symmetric part' satisfied by $u$ and the 'antisymmetric part' satisfied by $b$.

Furthermore, we may identify
\[A = \left[ \begin{array}{ccc}
               0 & a_{12} & -a_{31} \\
               -a_{12} & 0 & a_{23} \\
               a_{31} & -a_{23} & 0
             \end{array} \right]
\cong (a_{23}, a_{31}, a_{12}) \eqdef a \in \R^3.\]
In this identification we have $A \xi = \xi \times a$ for all $\xi \in \R^3$ and $b \otimes u - u \otimes b \cong b \times u$ for all $b,u \in \R^3$. The evolution equation of $b$ thus obtains the more intuitive form
\begin{equation} \label{Maxwell-Faraday}
\partial_t b + \nabla \times a = 0.
\end{equation}
We write $K$ in terms of $u$, $b$, $S$ and $a$ as
\begin{align*}
   K
&= \{(z^+,z^-,M) \colon M = z^+ \otimes z^- + \Pi I, \; \Pi \in \R\} \\
&\cong \{(u,b,S,a) \colon S = u \otimes u - b \otimes b + \Pi I, \; a = b \times u, \; \Pi \in \R\}.
\end{align*}
The wave cone conditions $M \xi + c z^+ = 0$, $M^T \xi + c z^- = 0$, $z^\pm \cdot \xi = 0$ (see \eqref{Wave cone conditions}) translate into
\begin{equation} \label{Wave cone conditions in u and b}
S \xi + c u = 0, \quad \xi \times a + c b = 0, \quad u \cdot \xi = b \cdot \xi = 0.
\end{equation}
Now the sought $\Lambda$-affine function finds a particularly simple form.

\begin{thm} \label{Empty interior theorem on hull}
If $(u,b,S,a) \in \Lambda$ or $(u,b,S,a) \in K^\Lambda$, then $a \cdot b = 0$.
\end{thm}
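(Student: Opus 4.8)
The plan is to treat the two assertions separately: first verify $a\cdot b = 0$ directly for $(u,b,S,a)\in\Lambda$ using the wave cone conditions \eqref{Wave cone conditions in u and b}, and then bootstrap this to $K^\Lambda$ by observing that $Q(u,b,S,a)\defeq a\cdot b$ is a $\Lambda$-affine quadratic form that vanishes on $K$.

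For the membership in $\Lambda$, I would split on the scalar $c$ appearing in \eqref{Wave cone conditions in u and b}. If $c\neq 0$, the condition $\xi\times a + cb = 0$ gives $b = -c^{-1}\,\xi\times a$, whence $a\cdot b = -c^{-1}\,a\cdot(\xi\times a) = 0$ by the triple product identity. If $c = 0$, then $\xi\times a = 0$ with $\xi\neq 0$, so $a$ is a scalar multiple of $\xi$; combined with $b\cdot\xi = 0$ this again yields $a\cdot b = 0$. This case split is the only mildly delicate point, and even it is routine.

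Next I would record the algebraic fact that makes the hull argument work: for arbitrary $V = (u,b,S,a)$ and $W = (u',b',S',a')\in\Lambda$,
\[ Q(V + tW) = a\cdot b + t\,(a\cdot b' + a'\cdot b) + t^2\,a'\cdot b', \]
and the coefficient $a'\cdot b'$ vanishes by the first part, since $W\in\Lambda$. Hence $t\mapsto Q(V+tW)$ is affine for every $V$ and every $W\in\Lambda$, so both $Q$ and $-Q$ are $\Lambda$-convex functions on $\R^{4\times 4}_0$.

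Finally, since $K$ consists of tuples with $a = b\times u$, we have $Q|_K\equiv 0$; as $\pm Q$ are $\Lambda$-convex and $\le 0$ on $K$, the definition of the $\Lambda$-convex hull forces $\pm Q\le 0$ on $K^\Lambda$, i.e.\ $a\cdot b = 0$ throughout $K^\Lambda$. I do not expect any genuine obstacle here: the whole argument is short, and the one thing worth double-checking is that the quadratic term in the displayed identity is exactly the quantity controlled by the $\Lambda$-case, so that nothing beyond $\Lambda$-affineness is needed.
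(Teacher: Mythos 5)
Your proposal is correct and follows essentially the same route as the paper: verify $a\cdot b=0$ on $\Lambda$ by a case split on $c$ in the wave cone conditions, then use that $Q(u,b,S,a)=a\cdot b$ is quadratic and vanishes on $\Lambda$, hence $\Lambda$-affine, and apply $\pm Q$ (which vanish on $K$) in the definition of $K^\Lambda$. The only cosmetic difference is that you dispatch the case $c\neq 0$ via the scalar triple product identity $a\cdot(\xi\times a)=0$, where the paper multiplies the three component equations of $\xi\times a+cb=0$ by $a_1,a_2,a_3$ and sums.
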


\begin{proof}
We define
\[Q(u,b,S,a) \defeq a \cdot b,\]
note that $Q|_K = 0$ and set out to show that $Q$ is $\Lambda$-affine. Since $Q$ is quadratic, it suffices to show that $a \cdot b = 0$ for all $(u,b,S,a) \in \Lambda$.

Let $(u,b,S,a) \in \Lambda$ and choose $(\xi,c) \in (\R^3 \setminus \{0\}) \times \R$ such that \eqref{Wave cone conditions in u and b} holds. Thus, writing $b = (b_1,b_2,b_3)$, we have
\begin{equation} \label{Three equations}
\xi_2 a_3 - \xi_3 a_2 + c b_1 = 0, \quad \xi_3 a_1 - \xi_1 a_3 + c b_2  = 0, \quad \xi_1 a_2 - \xi_2 a_1 + c b_3 = 0.
\end{equation}
By multiplying the three equations of \eqref{Three equations} by $a_1, a_2$ and $a_3$ respectively and taking the sum of the left-hand sides we get $c(a_1 b_1 + a_2 b_2 + a_3 b_3) = 0$, that is, $c b \cdot a = 0$, which finishes the proof when $c \neq 0$.

Suppose then $c = 0$. Let $\xi_1 \neq 0$, the cases $\xi_2 \neq 0$ and $\xi_3 \neq 0$ being similar because of symmetry. By assumption, $b \cdot \xi = 0$, and so, by \eqref{Three equations},
\[0 = \frac{a_1}{\xi_1} (\xi_1 b_1 + \xi_2 b_2 + \xi_3 b_3)
= a_1 b_1 + a_2 b_2 + a_3 b_3 = b \cdot a.\]
\end{proof}

\begin{rem}
Note that the variable $a$ is the electric field, \eqref{Maxwell-Faraday} is the Maxwell-Faraday equation and the pointwise constraint $a = b \times u$ is ideal Ohm's law.
\end{rem}

\subsection{Conservation of magnetic helicity by subsolutions and weak limits of solutions} \label{Conservation of magnetic helicity by subsolutions}
The conservation of magnetic helicity by weak solutions $u,b \in L^3_t L^3_x(\T^3 \times ]0,T[;\R^3)$ of \eqref{MHD}--\eqref{MHD3} was proved in \cite{KL}, and we present below a more elementary proof of the result along the lines of \cite[Theorem 1.4]{IV}, while generalizing the result to subsolutions. Recall the even mollifier $\chi \in C_c^\infty(\T^3 \times \R)$ defined in \textsection \ref{Vector and scalar potentials in 3D}.

\begin{proof}[Proof of Theorem \ref{Magnetic helicity preservation theorem}]
Suppose $u,b \in L^3(\T^3 \times ]0,T[;\R^3)$, $S \in L^1_{loc}(\T^3 \times ]0,T[;\mathcal{S}^{3 \times 3})$ and $a \in L^{3/2}(\T^3 \times ]0,T[;\R^3)$ form a solution of \eqref{MHD5}--\eqref{Linearized MHD2} and \eqref{Linearized MHD 4} that takes values in $K^\Lambda$ a.e. Let $\eta \in C_c^\infty(]0,T[)$, so that for $\epsilon > 0$ small enough, $\supp(\eta) \subset ]\epsilon,T-\epsilon[$. By using Lemma \ref{Time evolution of vector potential} and integrating by parts a few times we get
\[\begin{array}{lcl}
& & \displaystyle \int_\epsilon^{T-\epsilon} \partial_t \eta(t) \int_{\T^3} \Psi(x,t) \cdot b(x,t) \, dx \, dt \\
&=& \displaystyle \lim_{\delta \searrow 0} \int_\epsilon^{T-\epsilon} \partial_t \eta(t) \int_{\T^3} \Psi_\delta(x,t) \cdot b_\delta(x,t) \, dx \, dt \\
&=& \displaystyle \lim_{\delta \searrow 0} \left[ \int_\epsilon^{T-\epsilon} \eta(t) \int_{\T^3} \left( a_\delta(x,t) - \int_{\T^3} a_\delta(y,t) \, dy - \nabla g_\delta(x,t) \right) \cdot b_\delta(x,t) \, dx \, dt \right. \\
&+& \displaystyle \left. \int_\epsilon^{T-\epsilon} \eta(t) \int_{\T^3} \Psi_\delta(x,t) \cdot \nabla \times a_\delta(x,t) \, dx \, dt \right] \\
&=& \displaystyle 2 \lim_{\delta \searrow 0} \int_\epsilon^{T-\epsilon} \eta(t) \int_{\T^3} a_\delta(x,t) \cdot b_\delta(x,t) \, dx \, dt \\
&=& \displaystyle 2 \int_\epsilon^{T-\epsilon} \eta(t) \int_{\T^3} a(x,t) \cdot b(x,t) \, dx \, dt = 0,
\end{array}\]
since $a \cdot b = 0$.
\end{proof}

We present a variant of Theorem \ref{Magnetic helicity preservation theorem} which says that magnetic helicity is also conserved by weak limits of solutions. The proof is based on the observation that as a $\Lambda$-affine function, $Q(u,b,S,a) \defeq a \cdot b$ is weakly continuous for solutions of linearized 3D MHD.

\begin{lem} \label{Weak compactness lemma}
Suppose $b^j, u^j \in L^3(\T^3 \times ]0,T[;\R^3)$, $S^j \in L^1_{loc}(\T^3 \times ]0,T[; \mathcal{S}^{3 \times 3})$ and $a^j \in L^{3/2}(\T^3 \times ]0,T[;\R^3)$ satisfy the linearized MHD equations \eqref{MHD5}--\eqref{Linearized MHD2},\eqref{Linearized MHD 4}. Assume $b^j \rightharpoonup b$ and $u^j \rightharpoonup u$ in $L^3(\T^3 \times ]0,T[;\R^3)$ and $a^j \rightharpoonup a$ in $L^{3/2}(\T^3 \times ]0,T[;\R^3)$. Then $a^j \cdot b^j \to a \cdot b$ in $\mathcal{D}'(\T^3 \times ]0,T[)$.
\end{lem}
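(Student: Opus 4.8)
The plan is to exploit the div-curl structure that is already visible in the linearized MHD system: the quantity $a \cdot b$ is precisely of div-curl type once we recall that $b^j$ is curl-free in space-time in the sense that $\partial_t b^j + \nabla \times a^j = 0$ and $\dive b^j = 0$, while $a^j$ has controlled curl. More concretely, using Lemma \ref{Time evolution of vector potential} we write $b^j = \nabla \times \Psi^j$ with $\dive \Psi^j = 0$, $\int_{\T^3} \Psi^j(x,t)\,dx = 0$, and with the uniform bound $\norm{\Psi^j}_{L^3_t W^{1,3}_x} \lesssim \norm{b^j}_{L^3} \lesssim 1$; passing to a subsequence we get $\Psi^j \rightharpoonup \Psi$ in $L^3_t W^{1,3}_x$ and, since $b^j \rightharpoonup b$, the limit satisfies $b = \nabla \times \Psi$.

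First I would establish temporal compactness of $\Psi^j$. By Lemma \ref{Time evolution of vector potential} we have $\partial_t \Psi^j + a^j - \int_{\T^3} a^j(y,\cdot)\,dy = \nabla g^j$ with $\norm{\partial_t \Psi^j}_{L^{3/2}} \lesssim \norm{a^j}_{L^{3/2}} \lesssim 1$. Thus $\Psi^j$ is uniformly bounded in $L^3_t W^{1,3}_x$ with $\partial_t \Psi^j$ uniformly bounded in $L^{3/2}_t L^{3/2}_x$. Since $W^{1,3}(\T^3) \hookrightarrow\hookrightarrow L^q(\T^3)$ compactly for, say, $q = 3$, the Aubin--Lions--Simon lemma (as cited in Remark \ref{Remark on weak limits of solutions}) gives $\Psi^j \to \Psi$ strongly in $L^3_t L^3_x(\T^3 \times ]0,T[;\R^3)$ along a further subsequence.

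Next I would pass to the limit in the product. Integrating by parts in space at a.e. fixed time, for $\varphi \in C_c^\infty(\T^3 \times ]0,T[)$ we rewrite
\[
\int a^j \cdot b^j \, \varphi
= \int a^j \cdot (\nabla \times \Psi^j)\, \varphi
= \int (\nabla \times a^j) \cdot \Psi^j \, \varphi + \int a^j \cdot (\nabla \varphi \times \Psi^j),
\]
and on the first term use $\nabla \times a^j = -\partial_t b^j$, so that after a further integration by parts in $t$ it becomes $\int b^j \cdot \partial_t(\Psi^j \varphi) = \int b^j \cdot (\partial_t \Psi^j)\varphi + \int b^j \cdot \Psi^j \,\partial_t\varphi$. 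Now I would pass to the limit term by term: in $\int b^j \cdot \Psi^j\,\partial_t\varphi$ and $\int a^j \cdot (\nabla\varphi \times \Psi^j)$ we pair a weakly convergent factor ($b^j \rightharpoonup b$ in $L^3$, resp. $a^j \rightharpoonup a$ in $L^{3/2}$) against the strongly convergent factor $\Psi^j \to \Psi$ in $L^3$, which is legitimate since $3$ and $3/2$ are conjugate (and $\nabla\varphi,\partial_t\varphi$ are bounded); for $\int b^j \cdot (\partial_t\Psi^j)\varphi$ I would instead substitute $\partial_t\Psi^j = \nabla g^j + \int_{\T^3} a^j(y,\cdot)\,dy - a^j$ and use $\dive b^j = 0$ to kill the $\nabla g^j$ contribution (after moving the derivative onto $b^j\varphi$), the mean-value term pairs the weak $L^{3/2}$ limit of $\int a^j(y,\cdot)dy$ against $b^j \rightharpoonup b$ in $L^3$ integrated in $x$, and the $-a^j$ term again pairs weak $a^j$ against strong $\Psi^j$. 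Reassembling, every limit is the same expression with $(a,b,\Psi,g)$ in place of $(a^j,b^j,\Psi^j,g^j)$, and reversing the integrations by parts identifies it with $\int a \cdot b\,\varphi$. Hence $a^j \cdot b^j \to a \cdot b$ in $\mathcal{D}'(\T^3 \times ]0,T[)$, and since the limit is independent of the subsequence, the full sequence converges.

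The main obstacle is the temporal compactness of the potentials: one must be careful that $\Psi^j$ genuinely converges strongly (not merely weakly) in a space $L^p_t L^q_x$ with $q$ large enough to be paired against both $b^j \in L^3$ and $a^j \in L^{3/2}$; the exponent bookkeeping in Aubin--Lions and the integration-by-parts manipulation — in particular handling the $\partial_t \Psi^j$ term, which is only $L^{3/2}$ and would be awkward to pair directly with $b^j \in L^3$ unless one first uses $\dive b^j = 0$ to eliminate the $\nabla g^j$ piece — is where the argument has to be done with care. The rest is routine weak$\times$strong convergence.
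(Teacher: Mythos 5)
Your strategy is the paper's: introduce the potentials $\Psi^j$, $g^j$ of Lemma \ref{Time evolution of vector potential}, get strong convergence of $\Psi^j$ in $L^3_{loc}$ from the Aubin--Lions lemma (this part of your argument is correct), and then pass to the limit by pairing the strongly convergent potential against weakly convergent quantities. The execution, however, has a genuine gap: the $\nabla g^j$ contribution is \emph{not} killed by $\dive b^j=0$. Moving the derivative gives $\int\varphi\,b^j\cdot\nabla g^j=-\int g^j\,b^j\cdot\nabla\varphi$, and what survives is a product of two merely weakly convergent sequences ($g^j\rightharpoonup g$ in $L^{3/2}_tW^{1,3/2}_x$ and $b^j\rightharpoonup b$ in $L^3$); since you have no bound on $\partial_t g^j$, there is no compactness available for $g^j$ and the limit cannot be taken this way. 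The same defect affects your mean-value term: $t\mapsto\int_{\T^3}\varphi\,b^j\,dx$ converges only weakly in $t$, so pairing it with the weakly convergent $t\mapsto\int_{\T^3}a^j(y,\cdot)\,dy$ is again weak against weak. Both terms must be routed through the strongly convergent $\Psi^j$: for the first, $b^j\cdot\nabla g^j=\nabla\times\Psi^j\cdot\nabla g^j=\dive(\Psi^j\times\nabla g^j)$, so testing against $\varphi$ pairs strong $\Psi^j$ with weak $\nabla g^j$; for the second, $\int_{\T^3}\varphi\,\nabla\times\Psi^j\,dx=\int_{\T^3}\nabla\varphi\times\Psi^j\,dx$ converges strongly in $t$. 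This is exactly what the paper does when it lists $\nabla g^j\cdot\nabla\times\Psi^j$ among its compensated compactness quantities and treats the mean-value term by Fubini and an integration by parts against $\Psi^j$.

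A second, smaller problem: your ``$-a^j$ term'' is $-\int\varphi\,a^j\cdot b^j$, i.e.\ minus the quantity you started from, and it does not ``pair weak $a^j$ against strong $\Psi^j$''; as written the computation is circular. It is rescued by moving that term to the left-hand side, which produces $2\int\varphi\,a^j\cdot b^j$ on the left --- the same factor $2$ that appears in \eqref{Time evolution of magnetic helicity}. (There is also a sign slip: expanding $\int\nabla\times(\varphi a^j)\cdot\Psi^j$ produces $(\nabla\varphi\times a^j)\cdot\Psi^j=-a^j\cdot(\nabla\varphi\times\Psi^j)$, but this does not affect convergence.) With these repairs your proof closes and is in substance the paper's, which organizes the same cancellations more cleanly by writing $a^j\cdot b^j=(-\partial_t\Psi^j+\nabla g^j)\cdot\nabla\times\Psi^j+\int_{\T^3}a^j(y,\cdot)\,dy\cdot\nabla\times\Psi^j$ and recognizing the first summand as a sum of $2\times2$ space--time Jacobians of $\Psi^j$ plus the div--curl product above.
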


By a theorem of Tartar, every quadratic $\Lambda$-affine function is weakly continuous (see \cite[Corollary 13]{Tartar79}), but in \cite{Tartar79} the assumptions on the $L^p$ exponents are different (here $p = 3$ for $b$ and $p = 3/2$ for $a$) and the proof uses techniques of Fourier analysis that do not transfer immediately to our setting. We use instead the potentials provided by Lemma \ref{Time evolution of vector potential}.

\begin{proof}[Proof of Lemma \ref{Weak compactness lemma}]
Fix any subsequence of $a^j \cdot b^j$ (which we do not relabel); we show that there exists a further subsequence converging to $a \cdot b$ in $\mathcal{D}'(\T^3 \times ]0,T[)$. We use Lemma \ref{Time evolution of vector potential} to write $b^j = \nabla \times \Psi^j$ and $a^j - \int_{\T^3} a^j(y,\cdot) \, dy = -\partial_t \Psi^j + \nabla g^j$. We use the weak compactness of $L^p$ spaces in order to pass to a further subsequence and get $\Psi^j \rightharpoonup \Psi$ in $L^3(\T^3 \times ]0,T[;\R^3)$, $D \Psi^j \rightharpoonup M$ in $L^3(\T^3 \times ]0,T[;\R^{3 \times 3})$ and $\partial_t \Psi^j \rightharpoonup f$ in $L^{3/2}(\T^3 \times ]0,T[;\R^3)$, and clearly $M = D \Psi$ and $f = \partial_t \Psi$. By the Aubin-Lions compactness lemma (see e.g. \cite[Lemma 7.7]{Rou}), we conclude that $\Psi^j \to \Psi$ in $L^3(\T^3 \times ]\epsilon,T-\epsilon[;\R^3)$ whenever $0 < \epsilon < T$. We write
\begin{equation} \label{Emergence of Pfaffian}
a^j \cdot b^j = (-\partial_t \Psi^j + \nabla g^j) \cdot \nabla \times \Psi^j + \int_{\T^3} a^j(y,\cdot) \, dy \cdot \nabla \times \Psi^j
\end{equation}
and treat the two inner products separately. The first one is a sum of $L^1$-integrable compensated compactness quantities:
\[(-\partial_t \Psi^j + \nabla g^j) \cdot \nabla \times \Psi^j
= \frac{\partial(\Psi^j_1,\Psi^j_2)}{\partial (t,x_3)} + \frac{\partial(\Psi^j_2,\Psi^j_3)}{\partial (t,x_1)}
 + \frac{\partial(\Psi^j_3,\Psi^j_1)}{\partial (t,x_2)} + \nabla g^j \cdot \nabla \times \Psi^j,\]
and so $(-\partial_t \Psi^j + \nabla g^j) \cdot \nabla \times \Psi^j \to (-\partial_t \Psi + \nabla g) \cdot \nabla \times \Psi$ in $\mathcal{D}'(\T^3 \times ]0,T[)$. For the second one we fix $\eta \in C_c^\infty(\T^3 \times ]0,T[)$ and use Fubini's theorem, an integration by parts and the fact that $\Psi^j \to \Psi$ in $L^3_{loc}(\T^3 \times ]0,T[;\R^3)$ to get
\[\begin{array}{lcl}
& & \displaystyle \int_0^T \int_{\T^3} \eta(x,t) \int_{\T^3} a^j(y,t) \, dy \cdot \nabla_x \times \Psi^j(x,t) \, dx \, dt \\
&=& \displaystyle \int_0^T \int_{\T^3} \int_{\T^3} \nabla_x \eta(x,t) \times a^j(y,t) \cdot \Psi^j(x,t) \, dx \, dy \, dt \\
&\to& \displaystyle \int_0^T \int_{\T^3} \int_{\T^3} \nabla_x \eta(x,t) \times a(y,t) \cdot \Psi(x,t) \, dx \, dy \, dt \\
&=& \displaystyle \int_0^T \int_{\T^3} \eta(x,t) \int_{\T^3} a(y,t) \, dy \cdot \nabla_x \times \Psi(x,t) \, dx \, dt.
  \end{array}\]
Hence, $a^j \cdot b^j \to a \cdot b$ in $\mathcal{D}'(\T^3 \times ]0,T[)$.
\end{proof}

\begin{rem}
The expression $(-\partial_t \Psi^j + \nabla g^j) \cdot \nabla \times \Psi^j$ appearing in \eqref{Emergence of Pfaffian} is, up to a constant, a familiar compensated compactness quantity called the \emph{Pfaffian} of the antisymmetrized Jacobian matrix $2^{-1}(D_{x,t}-D^T_{x,t}) (\Psi,g) \colon \R^3 \times ]0,T[ \to \mathcal{A}^{4 \times 4}$.
\end{rem}

\begin{thm} \label{Conservation of magnetic helicity by weak limits of solutions}
Suppose $b^j, u^j \in L^3(\T^3 \times ]0,T[;\R^3)$ and $\Pi^j \in L^1_{loc}(\T^3 \times ]0,T[)$ satisfy the MHD equations \eqref{MHD}--\eqref{MHD3} and condition \eqref{MHD5}. Suppose $b^j \rightharpoonup b$ and $u^j \rightharpoonup u$ in $L^3(\T^3 \times ]0,T[;\R^3)$ and that $\Pi^j \rightharpoonup \Pi$ in $L^1_{loc}(\T^3 \times ]0,T[)$. Then $(b,u,\Pi)$ conserves magnetic helicity a.e. $t \in ]0,T[$.
\end{thm}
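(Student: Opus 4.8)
The plan is the following. The weak limit $(b,u,\Pi)$ need not solve the full MHD system, because the quadratic terms $u\otimes u-b\otimes b$ and $b\otimes u-u\otimes b$ are not weakly continuous; however, the magnetic field $b$ \emph{does} satisfy a linearized induction equation $\partial_t b+\nabla\times a=0$ for a suitable weak limit $a$ of the electric fields $a^j:=b^j\times u^j$, and the crucial point is that $a\cdot b=0$ by compensated compactness. Magnetic helicity conservation will then follow from the time-evolution identity \eqref{Time evolution of magnetic helicity}, exactly as in the proof of Theorem \ref{Magnetic helicity preservation theorem}.

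Concretely, I would first set $a^j:=b^j\times u^j$ and $S^j:=u^j\otimes u^j-b^j\otimes b^j+\Pi^j I$ for each $j$. By H\"older's inequality $a^j\in L^{3/2}(\T^3\times\,]0,T[;\R^3)$ and $S^j\in L^1_{loc}$, and $(u^j,b^j,S^j,a^j)$ solves the linearized MHD equations \eqref{MHD5}--\eqref{Linearized MHD2},\eqref{Linearized MHD 4} and takes values pointwise in $K\subset K^\Lambda$. Since $\sup_j\norm{a^j}_{L^{3/2}}\le(\sup_j\norm{b^j}_{L^3})(\sup_j\norm{u^j}_{L^3})<\infty$, passing to a subsequence (not relabeled) gives $a^j\rightharpoonup a$ in $L^{3/2}$. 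Passing to the limit in $\dive b^j=0$, $\int_{\T^3}b^j(x,t)\,dx=0$ and $\partial_t b^j+\nabla\times a^j=0$ yields $\dive b=0$, $\int_{\T^3}b(x,t)\,dx=0$ a.e.\ $t$, and $\partial_t b+\nabla\times a=0$; note that $u$ has now disappeared. Lemma \ref{Time evolution of vector potential} then provides the unique divergence-free, mean-zero vector potential $\Psi\in L^3_t W^{1,3}_x$ with $\nabla\times\Psi=b$, which is exactly the $\Psi$ defining the magnetic helicity of $b$.

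Next I would invoke compensated compactness. By Theorem \ref{Empty interior theorem on hull}, $a^j\cdot b^j=0$ a.e.\ for every $j$, since $(u^j,b^j,S^j,a^j)\in K\subset K^\Lambda$; and since $b^j\rightharpoonup b$, $u^j\rightharpoonup u$ in $L^3$ and $a^j\rightharpoonup a$ in $L^{3/2}$, Lemma \ref{Weak compactness lemma} gives $a^j\cdot b^j\to a\cdot b$ in $\mathcal{D}'(\T^3\times\,]0,T[)$. Hence $a\cdot b=0$ as a distribution, and as $a\cdot b\in L^1_{loc}$ this forces $a\cdot b=0$ a.e., so $\int_{\T^3}a(x,t)\cdot b(x,t)\,dx=0$ for a.e.\ $t$. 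Finally, since $b\in L^3$ and $a\in L^{3/2}$ satisfy $\dive b=0$ and $\partial_t b+\nabla\times a=0$, the identity \eqref{Time evolution of magnetic helicity} --- equivalently the integration-by-parts computation in the proof of Theorem \ref{Magnetic helicity preservation theorem}, which uses only the magnetic part of the equations --- gives $\partial_t\int_{\T^3}\Psi(x,t)\cdot b(x,t)\,dx=-2\int_{\T^3}a(x,t)\cdot b(x,t)\,dx=0$ in $\mathcal{D}'(\,]0,T[)$. Since $t\mapsto\int_{\T^3}\Psi(x,t)\cdot b(x,t)\,dx$ belongs to $L^{3/2}(\,]0,T[)$ (again by H\"older) and has vanishing distributional derivative, it agrees a.e.\ with a constant; this is the asserted conservation. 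The conclusion does not depend on the extracted subsequence, as $\int_{\T^3}\Psi\cdot b\,dx$ depends only on $b$ (two admissible electric fields differ by a gradient, which integrates to zero against the divergence-free $b$).

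The main obstacle is conceptual rather than computational: because $b\otimes u-u\otimes b$ is not weakly continuous, one cannot claim that $b$ solves the induction equation with electric field $b\times u$, nor that $\int a\cdot b$ is the naive weak limit of $\int a^j\cdot b^j$. The argument works precisely because $a\cdot b$ is a $\Lambda$-affine, null-Lagrangian-type quantity, which is what makes Lemma \ref{Weak compactness lemma} --- established via the Pfaffian/Jacobian structure of the potentials $\Psi^j,g^j$ --- available. Everything else (subsequence extraction, passing to limits in the linear equations, recovering the vector potential, and the final ``zero derivative implies constant'' step) is routine.
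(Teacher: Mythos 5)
Your proposal is correct and follows essentially the same route as the paper: define $a^j:=b^j\times u^j$ and $S^j$, extract a weakly convergent subsequence $a^j\rightharpoonup a$ in $L^{3/2}$, use Lemma \ref{Weak compactness lemma} together with $a^j\cdot b^j=0$ to conclude $a\cdot b=0$, and finish via the time-evolution identity \eqref{Time evolution of magnetic helicity}. Your added remarks on passing to the limit in the linear equations, recovering $\Psi$ from Lemma \ref{Time evolution of vector potential}, and the independence of the conclusion from the extracted subsequence are details the paper leaves implicit, but they do not change the argument.
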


\begin{proof}
We denote $a^j \defeq b^j \times u^j \in L^{3/2}(\T^3 \times ]0,T[;\R^3)$ and $S^j \defeq u^j \otimes u^j - b^j \otimes b^j \in L^{3/2}(\T^3 \times ]0,T[;\mathcal{S}^{3 \times 3})$. Thus $(u^j,b^j,S^j,a^j)$ satisfy the linearized MHD equations \eqref{MHD5}--\eqref{Linearized MHD2} and \eqref{Linearized MHD 4}. By passing to a subsequence, $a^j \rightharpoonup a$ in $L^{3/2}(\T^3 \times ]0,T[;\R^3)$. If $\eta \in C_c^\infty(]0,T[)$, then by Lemma \ref{Weak compactness lemma} and the fact that $a^j \cdot b^j = 0$ for every $j \in \N$,
\[\int_0^T \partial_t \eta(t) \int_{\T^3} \Psi(x,t) \cdot b(x,t) \, dx
= 2 \int_0^T \eta(t) \int_{\T^3} a(x,t) \cdot b(x,t) \, dx \, dt
= 0.\]
\end{proof}

Theorem \ref{Conservation of magnetic helicity by weak limits of solutions} can also be proved by noting, as in the proof of Lemma \ref{Weak compactness lemma}, that $\Psi^j \to \Psi$ in $L^3$. However, Lemma \ref{Weak compactness lemma} is of independent interest and sheds light on the question whether the relaxation of 3D MHD coincides with the $\Lambda$-convex hull $K^\Lambda$ by showing that the relaxation respects the constraint $a \cdot b = 0$.

\section{Theorems \ref{Non-empty relative interior theorem}--\ref{Subsolution theorem} and estimates on $K^\Lambda$}
The aim of this section is to show the existence of non-trivial compactly supported strict subsolutions of 3D MHD. In \textsection \ref{Definition of subsolutions} we give the definition of strict subsolutions and formulate in Theorem \ref{Theorem on nonempty relative interior} a strengthening of Theorem \ref{Non-empty relative interior theorem} which says that the origin is in the relative interior of the lamination convex hull $K^{lc,\Lambda}$ (relative to the constraint given in Theorem \ref{Empty interior theorem on hull}), the proof is presented in \textsection \ref{A lemma on the relative interior of the hull}--\ref{Completion of the proof of hull lemma}. Theorem \ref{Subsolution theorem} is obtained as a rather straightforward corollary at the end of \textsection \ref{Definition of subsolutions}. We also briefly discuss the work of Bronzi, Lopes Filho and Nussenzweig Lopes on convex integration in 3D MHD in \textsection \ref{Discussion of the solutions of Bronzi & al.}.

\subsection{Definition of strict subsolutions and proof of Theorem \ref{Subsolution theorem}} \label{Definition of subsolutions}

Recall that in terms of the variables $u$, $b$, $S$ and $a$ we have $K_{r,s} = \{(u,b,S,a) \colon \abs{u+b}=r, \abs{u-b}=s, \abs{\Pi} \le rs, S = u \otimes u - b \otimes b + \Pi I, a = b \times u\}$. Theorem \ref{Empty interior theorem on hull} implies that if $(u,b,S,a) \in K^{lc,\Lambda}_{r,s}$, then $a \cdot b = 0$.

\begin{defin} \label{Definition of U}
Let $r,s > 0$. The relative interior of $K^{lc,\Lambda}_{r,s}$ in $\{(u,b,S,a) \in \R^3 \times \R^3 \times \mathcal{S}^{3 \times 3} \times \R^3 \colon a \cdot b = 0\}$ is denoted by $\mathcal{U}_{r,s}$.
\end{defin}

It will turn out in Theorem \ref{Theorem on nonempty relative interior} that $\mathcal{U}_{r,s} \neq \emptyset$. This motivates the following definition in analogy to, among others, the definition of subsolutions of Euler equations in \cite[p. 350]{DLS12}.

\begin{defin} \label{Subsolutions}
Let $r,s > 0$. The mappings $u,b \in L^2_{loc}(\R^3,\R^3)$, $S \in L^1_{loc}(\R^3,\mathcal{S}^{3 \times 3})$ and $a \in L^1_{loc}(\R^3,\R^3)$ form a \emph{strict subsolution} of \eqref{MHD}--\eqref{MHD3} if $(u,b,S,a)$ satisfies \eqref{Linearized MHD1}--\eqref{Linearized MHD2},\eqref{Linearized MHD 4} and $(u,b,S,a)(x,t) \in \mathcal{U}_{r,s}$ for almost every $(x,t) \in \R^3 \times \R$.
\end{defin}

The existence of strict subsolutions plays a pivotal part in the construction of convex integration solutions of equations of fluid dynamics in the Tartar framework. For the existence of strict subsolutions it is of course mandatory that the set $\mathcal{U}_{r,s}$ be non-empty. We record the following strengthening of Theorem \ref{Non-empty relative interior theorem}.

\begin{thm} \label{Theorem on nonempty relative interior}
Let $r,s > 0$. Then $0 \in \mathcal{U}_{r,s}$.
\end{thm}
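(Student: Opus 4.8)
The plan is to show that $0$ lies in the relative interior of $K_{r,s}^{lc,\Lambda}$ inside the affine-algebraic set $\{(u,b,S,a)\colon a\cdot b = 0\}$ by producing, around $0$, a full-dimensional family of admissible $\Lambda$-segments whose endpoints all lie in $K_{r,s}^{lc,\Lambda}$. Concretely, I would first reduce to a convenient normalization: after an orthogonal change of coordinates we may align $z^+$ and $z^-$ with prescribed directions, so it suffices to understand a neighbourhood of $0$ in the $(z^+,z^-,M)$ picture and then translate back using $u=(z^++z^-)/2$, $b=(z^+-z^-)/2$, $S=(M+M^T)/2$, $a\leftrightarrow(M-M^T)/2$. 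The key geometric input is Lemma \ref{Wave cone proposition 1}: when $\alpha\times\beta\neq 0$ the difference $(\alpha,\beta,\alpha\otimes\beta)-(\alpha',\beta',\alpha'\otimes\beta')$ lies in $\Lambda$ for a large family of choices, and more usefully for us, fixing $z^\pm$ one can move $M$ freely within a $6$-dimensional subspace of $\R^{3\times 3}$ (the coefficients $c_{11},c_{12},c_{21},c_{22}$ plus the "diagonal" $c_{33}$-type freedom from $K_{r,s}$'s pressure term, together with the symmetric $c_{13}=c_{32}$ direction). I would iterate such laminates: start with a pair of points in $K_{r,s}$ with $\abs{z^\pm}=r,s$, take their midpoint to obtain new values of $z^\pm$ of smaller length (as in the explicit $\alpha^\pm=(\pm3,1)$ computation in Proposition \ref{Empty interior proposition}, adapted to 3D), and at each stage exploit the available $\Lambda$-directions to enlarge the reachable set of $M$'s.

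The heart of the argument is a dimension count. The ambient constraint set $\{a\cdot b=0\}$ has dimension $\dim(\R^3\times\R^3\times\mathcal S^{3\times3}\times\R^3)-1 = 3+3+6+3-1 = 14$. I would therefore exhibit $14$ linearly independent tangent directions at $0$ along which one can move while staying in $K_{r,s}^{lc,\Lambda}$. The directions split naturally: (i) the "Euler part" $z^+=z^-$, $M$ symmetric — here one inherits the known non-triviality of the $\Lambda$-convex hull of the linearized Euler constraint, giving motion in the $u$-slot and in the symmetric $S$-slot; (ii) directions that switch on $b$ — using Lemma \ref{Wave cone proposition 2} (with $k=-1$, i.e. $z^-=-z^+$, meaning $u=0$, $b=z^+$) and Lemma \ref{Wave cone proposition 3}, which show $(\alpha,-\alpha,f\otimes\alpha+\alpha\otimes g)\in\Lambda$ and $(0,\beta,f\otimes\beta+\beta\otimes g)\in\Lambda$ for all $f,g$, one gets plenty of room to turn on $b$ together with compensating changes in $S$ and $a$; (iii) the pressure direction $\Pi I$, which is always available inside $K_{r,s}$ since $\abs\Pi\le rs$ is an open constraint near $\Pi=0$. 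The one genuinely rigid direction is $a\cdot b$ itself, and that is exactly the coordinate we have quotiented out; so the count should close.

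The main obstacle I anticipate is \textbf{not} producing \emph{some} laminates but controlling them well enough to fill an open set: one must check that the laminates from families (i), (ii), (iii) can be performed \emph{simultaneously} near $0$ — i.e. that the corresponding perturbations of $(u,b,S,a)$ are linearly independent and that, crucially, $(z^+,z^-)$ can be kept of the prescribed lengths $r,s$ throughout (the constraints $\abs{z^\pm}=r,s$ are \emph{not} open, so every intermediate and final point of the lamination must land back on that product of spheres, while only $M$ is free). The standard device is to work with pairs of points symmetric about $0$ on the sphere product and average, so that the lengths are automatically restored at the midpoint; the work is in verifying that the resulting "return map" to $\{\abs{z^+}=r,\abs{z^-}=s\}$ still leaves a full-rank family of directions for $M$. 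I would handle this by an explicit construction: pick a basis of tangent vectors to the spheres, pair up antipodal perturbations, and for each resulting midpoint read off from Lemmas \ref{Wave cone proposition 1}--\ref{Wave cone proposition 3} which $M$-directions remain open; a careful bookkeeping then shows the reachable set contains a relatively open neighbourhood of $0$. The remaining sections \textsection\ref{A lemma on the relative interior of the hull}--\ref{Completion of the proof of hull lemma} presumably carry out precisely this bookkeeping.
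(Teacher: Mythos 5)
Your overall plan---iterated laminates starting from antipodal pairs on the product of spheres $\{\abs{z^+}=r,\abs{z^-}=s\}$, with the pressure and the midpoint trick restoring the non-open length constraints---matches the paper's opening moves (Lemmas \ref{Restriction on next action} and \ref{Lemma for whole balls}). But the central step of your argument has a genuine logical gap: a dimension count exhibiting $14$ linearly independent directions through the origin along which one can move inside $K^{lc,\Lambda}_{r,s}$ does \emph{not} yield a relatively open neighbourhood of $0$. The lamination convex hull is closed only under convex combinations along $\Lambda$-directions, not under arbitrary convex combinations, so the convex span of your $14$ admissible segments need not lie in $K^{lc,\Lambda}_{r,s}$. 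To prove the theorem you must show that \emph{every} point $(\alpha,\beta,\alpha\otimes\beta+\Pi I+N)$ in a full relative neighbourhood of $0$ (i.e.\ with $N=\sum_{i,j}c_{ij}f_i\otimes f_j$ satisfying $c_{23}=c_{32}$ in the basis adapted to $f_1=(\alpha-\beta)/\abs{\alpha-\beta}$) is itself an iterated $\Lambda$-convex combination of points of $K_{r,s}$; this is exactly the quantitative content of Lemma \ref{Hull lemma}, and it cannot be replaced by tangent-space bookkeeping.

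Relatedly, your sketch never engages with the case that actually consumes most of the paper's effort: producing a nonzero \emph{symmetric} off-diagonal component $c_{23}=c_{32}\neq 0$. All the directions you list---perturbing $z^+$ or $z^-$ separately, rank-one corrections $f\otimes\alpha+\alpha\otimes g$ from Lemmas \ref{Wave cone proposition 2}--\ref{Wave cone proposition 3}, the pressure, the Euler-type symmetric perturbations---only reach matrices with $c_{23}=0$ (this is the content of Lemmas \ref{Easier directions} and \ref{Five directions}). As the paper points out at the start of \textsection\ref{Completion of the proof of hull lemma}, natural directions $(f,g)=(cf_i,df_j)$ always give $c_{23}=c_{32}=0$ unless one perturbs $\alpha$ along $f_2$ and $\beta$ along $f_3$ simultaneously with $c,d\neq 0$; making this work while keeping the wave-cone conditions \eqref{Wave cone condition 1}--\eqref{Wave cone condition 5} and landing on the prescribed $N$ requires the two-parameter construction and the second pass with $(-c/2,-2d)$ in Lemma \ref{Three non-vanishing terms}. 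Without an argument for this component your reachable set stays inside the proper subvariety $\{c_{23}=c_{32}=0\}$ of $\{a\cdot b=0\}$ and has empty relative interior, so the theorem does not follow.
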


Sections \ref{A lemma on the relative interior of the hull}--\ref{Completion of the proof of hull lemma} are devoted to the proof of Theorem \ref{Theorem on nonempty relative interior}. Assuming Theorem \ref{Theorem on nonempty relative interior} we now prove Theorem \ref{Subsolution theorem} via the existence of compactly supported strict subsolutions of Euler equations and the following simple result.

\begin{lem} \label{Lemma on 1D subsolutions}
Let $E \in C_c^\infty(\R^3 \times \R)$ and $\eta = (\eta',\eta_4) \in \R^3 \times \R$. Then
\begin{equation} \label{Formulas of b and a}
b \defeq \nabla E \times \eta', \qquad a \defeq -(\partial_t E) \eta' + \eta_4 \nabla E
\end{equation}
satisfy the conditions
\begin{align*}
& \dive b = 0, \\
& \partial_t b + \nabla \times a = 0, \\
& b \cdot a = 0.
\end{align*}
\end{lem}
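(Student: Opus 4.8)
The plan is to verify all three identities by a direct computation, using only the elementary vector-calculus identities $\dive(F \times G) = G \cdot (\nabla\times F) - F \cdot (\nabla\times G)$ and $\nabla\times(f G) = \nabla f \times G + f\,(\nabla\times G)$, together with the facts that $\nabla\times\nabla E \equiv 0$ and that $\eta'$ is a constant vector. Since $E \in C_c^\infty(\R^3 \times \R)$, all differentiations are legitimate and every expression involved is smooth and compactly supported.

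For the divergence I would apply the first identity with $F = \nabla E$ and $G = \eta'$, giving $\dive b = \eta' \cdot (\nabla\times\nabla E) - \nabla E \cdot (\nabla\times\eta') = 0$, since both curls vanish. For the evolution equation, differentiating under the spatial gradient gives $\partial_t b = \nabla(\partial_t E) \times \eta'$ (again using that $\eta'$ is constant), while the second identity with $f = \partial_t E$ and $G = \eta'$, together with $\nabla\times\nabla E = 0$, yields
\[
\nabla\times a = -\nabla\times\bigl((\partial_t E)\eta'\bigr) + \eta_4\,(\nabla\times\nabla E) = -\nabla(\partial_t E) \times \eta';
\]
adding the two lines gives $\partial_t b + \nabla\times a = 0$. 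For $b \cdot a = 0$ it suffices to observe that $b = \nabla E \times \eta'$ is orthogonal to both $\eta'$ and $\nabla E$, whereas $a = -(\partial_t E)\eta' + \eta_4 \nabla E$ is a linear combination of precisely those two vectors, so the inner product vanishes pointwise.

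The computation is entirely elementary and I anticipate no real obstacle; the only points requiring care are the signs in the cross-product identities and the repeated use of the constancy of $\eta'$ (and of $\eta_4$), which is exactly what makes the offending terms cancel. Geometrically, $b$ and $a$ are built from a single scalar potential $E$ together with the fixed direction $(\eta',\eta_4)$, reflecting the one-dimensional (plane-wave) structure to which the name of the lemma alludes.
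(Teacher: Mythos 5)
Your computation is correct: all three identities follow exactly as you describe from the standard vector-calculus identities, the constancy of $\eta'$ and $\eta_4$, and the orthogonality of $\nabla E \times \eta'$ to the span of $\nabla E$ and $\eta'$. The paper omits the proof of this lemma entirely (calling it a ``simple result''), and your direct verification is evidently the intended argument.
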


\begin{proof}[Proof of Theorem \ref{Subsolution theorem}]
Let $r,s > 0$ and choose a solution $(u,S) \in C_c^\infty(\R^3 \times \R; \R^3 \times \mathcal{S}^{3 \times 3})$ of the linearized Euler equations $\partial_t u + \dive S = 0$ and $\dive u = 0$ with $u \not\equiv 0$ (see \cite[Lemma 4.4]{DLS09}). Then choose $E \in C_c^\infty(\R^3 \times \R)$ and $\eta = (\eta',\eta_4) \in \R^3 \times \R$ with $\nabla E \times \eta' \not\equiv 0$ and define $b,a \in C_c^\infty(\R^3 \times \R; \R^3)$ by \eqref{Formulas of b and a}. Now $(u,b,S,a) \in C_c^\infty(\R^3 \times \R; \R^3 \times \R^3 \times \mathcal{S}^{3 \times 3} \times \R^3)$, and after possibly multiplying by a small non-zero constant, Theorem \ref{Theorem on nonempty relative interior} gives $(u,b,S,a)(x,t) \in \mathcal{U}_{r,s}$ for every $(x,t) \in \R^3 \times \R$.
\end{proof}

\subsection{A lemma on the relative interior of the $\Lambda$-convex hull} \label{A lemma on the relative interior of the hull}
We begin the proof of Theorem \ref{Theorem on nonempty relative interior} by formulating in Lemma \ref{Hull lemma} a slightly stronger result which is also of independent interest. The lemma is stated in terms of Els\"asser variables, and we next discuss the relevant definitions in this formalism.

We first write the $\Lambda$-affine quantity $a \cdot b$ of Theorem \ref{Empty interior theorem on hull} in terms of Els\"{a}sser variables. When $\alpha,\beta \in \R^3$ and $\alpha \neq \beta$, we denote $f_1 \defeq (\alpha-\beta)/\abs{\alpha-\beta} = b/\abs{b}$ and suppose that $f_1$, $f_2$ and $f_3$ form an orthonormal basis of $\R^3$ with $f_1 \times f_2 = f_3$. If we write a general $3 \times 3$ matrix as $N \defeq \sum_{i,j=1}^3 c_{ij} f_i \otimes f_j$, then $a \cdot b = \abs{b} (c_{23} - c_{32})/2$. Indeed,
\begin{align*}
c_{23}-c_{32}
&= N f_3 \cdot f_2 - N f_2 \cdot f_3
 = (N - N^T) f_3 \cdot f_2 \\
&= 2 (A + u \otimes b - b \otimes u) f_3 \cdot f_2 \\
&= f_3 \times 2 (a + u \times b) \cdot f_2 \\
&= 2 (a + u \times b) \cdot f_2 \times f_3 \\
&= 2 a \cdot \frac{b}{\abs{b}} = 0.
\end{align*}
Theorem \ref{Theorem on nonempty relative interior} is a rather direct consequence of the following lemma once one sets $\tau = 0$, as we show after presenting the lemma.

\begin{lem} \label{Hull lemma}
If $r,s > 0$ and $0 \le \tau < 1$, then there exists a constant $c_{\tau, r, s} > 0$ with the following property: if

\renewcommand{\labelenumi}{(\roman{enumi})}
\begin{enumerate}

\item $\alpha, \beta \in \R^3$ and $\Pi \in \R$,

\item $\{f_1,f_2,f_3\}$ is an orthonormal basis of $\R^3$, where $f_1 = (\alpha-\beta)/\abs{\alpha-\beta}$ if $\alpha \neq \beta$,

\item $N = \sum_{i,j=1}^3 c_{ij} f_i \otimes f_j$ with $c_{23} = c_{32}$,

\item $\max\{\abs{\abs{\alpha}-\tau r}, \abs{\abs{\beta}-\tau s}, \abs{N}, \abs{\Pi} - \tau^2 rs\} < c_{\tau,r,s}$,
\end{enumerate}

\noindent then
\[(\alpha,\beta,\alpha \otimes \beta + \Pi I + N) \in K^{lc,\Lambda}_{r,s}.\]
\end{lem}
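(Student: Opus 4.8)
The plan is to realise a full relative neighbourhood of each ``shrunk'' base configuration as an element of the lamination convex hull $K^{lc,\Lambda}_{r,s}$, built from $K_{r,s}$ by a bounded number of $\Lambda$-segments, and then to extract the constant $c_{\tau,r,s}$ by compactness. I work throughout in Els\"asser variables, so the target point is $(\alpha,\beta,\alpha\otimes\beta+\Pi I+N)$.

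\emph{The clean core.} First I would show that the solid set
\[\Sigma\defeq\{(\alpha,\beta,\alpha\otimes\beta+\Pi I)\colon \abs{\alpha}\le r,\ \abs{\beta}\le s,\ \abs{\Pi}\le rs\}\]
is contained in $K^{lc,\Lambda}_{r,s}$, by an elementary double lamination. A direct check of the wave-cone conditions \eqref{Wave cone conditions} (equivalently, the ``in particular'' parts of Lemmas \ref{Wave cone proposition 2} and \ref{Wave cone proposition 3} with $k=0$) gives $(v,0,v\otimes\beta)\in\Lambda$ and $(0,w,\alpha\otimes w)\in\Lambda$ for all $v,w\in\R^3$. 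Hence, fixing $\abs{\beta}=s$ and $\abs{\Pi}\le rs$, the segment joining the two points $(\pm re_1,\beta,\pm re_1\otimes\beta+\Pi I)\in K_{r,s}$ lies in $K^{lc,\Lambda}_{r,s}$; varying $e_1$ this produces every $(\alpha,\beta,\alpha\otimes\beta+\Pi I)$ with $\abs{\alpha}\le r$, $\abs{\beta}=s$, $\abs{\Pi}\le rs$. A second lamination of the same kind in the $z^-$ variable (joining $(\alpha,\pm se_2,\alpha\otimes(\pm se_2)+\Pi I)$, which are now already in the hull) removes the restriction $\abs{\beta}=s$ and yields $\Sigma\subset K^{lc,\Lambda}_{r,s}$. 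In particular, discarding $N$, the point $(\alpha,\beta,\alpha\otimes\beta+\Pi I)$ is automatically in the hull whenever $\abs{\alpha}<r$, $\abs{\beta}<s$, $\abs{\Pi}\le rs$, which under (iv) holds as soon as $c_{\tau,r,s}<\min\{(1-\tau)r,(1-\tau)s,(1-\tau^2)rs\}$.

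\emph{Absorbing $N$ --- the main obstacle.} The substance of the proof is to perturb $M$ by an arbitrary small $N$ with $c_{23}=c_{32}$ (eight real parameters, seven of them transverse to $\Sigma$) while keeping $z^\pm$ near the base value. The difficulty is that at a fixed base point the wave cone is too thin: in the generic stratum $\alpha\times\beta\neq0$, Lemma \ref{Wave cone proposition 1} supplies only a five-parameter family of $\Lambda$-directions, and laminating along a single one (after using the free coefficient to cancel the quadratic growth of $z^+\otimes z^-$) moves $M$ only inside the four-dimensional span of $\alpha\otimes\alpha$, $\beta\otimes\beta$, $\beta\otimes\alpha$ and $\alpha\otimes(\alpha\times\beta)+(\alpha\times\beta)\otimes\beta$ --- short of all seven directions. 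I would therefore chain several laminations: exhibit an explicit finite laminate, morally a $T_4$- or $T_5$-configuration, whose outer vertices all lie in the set $\Sigma$ already constructed and whose barycentre, reached by successive $\Lambda$-splittings, is $(\alpha,\beta,\alpha\otimes\beta+\Pi I+N)$; the $O(t^2)$ corrections from $z^+\otimes z^-$ only shrink the admissible range of $N$ and are harmless. This must be run separately over the strata of the base configuration --- $\alpha\times\beta\neq0$; $\alpha\parallel\beta$ with $\alpha\neq0$; $\alpha=0$ or $\beta=0$; $\alpha=\beta$ --- using Lemmas \ref{Wave cone proposition 1}, \ref{Wave cone proposition 2}, \ref{Wave cone proposition 3}, \ref{Wave cone proposition 4} respectively, and in each case one checks that the perturbations produced span, modulo $I$, exactly the seven transverse $N$-directions compatible with $c_{23}=c_{32}$ (equivalently $a\cdot b=0$, cf. Theorem \ref{Empty interior theorem on hull} and the identity $a\cdot b=\abs{b}(c_{23}-c_{32})/2$ recorded above). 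This spanning verification, together with quantitative control of how far into $\Sigma$ the outer vertices of the laminate sit, is where essentially all of the work lies.

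\emph{Uniformity.} The previous step produces, for each admissible base configuration, a relative ball of some radius $\rho>0$ about the corresponding $N=0$ point inside $K^{lc,\Lambda}_{r,s}$. Since the set of admissible base configurations $\{(\alpha_0,\beta_0,\Pi_0)\colon\abs{\alpha_0}=\tau r,\ \abs{\beta_0}=\tau s,\ \abs{\Pi_0}\le\tau^2 rs\}$ is compact and only finitely many stratum-by-stratum constructions occur, $\rho$ can be taken uniform; choosing $c_{\tau,r,s}$ below this uniform $\rho$ and below the gap $\min\{(1-\tau)r,(1-\tau)s,(1-\tau^2)rs\}$ needed in the core step proves the lemma. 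Specialising $\tau=0$ then gives Theorem \ref{Theorem on nonempty relative interior}: $0$ is the base point, and the lemma says every point of $\{a\cdot b=0\}$ sufficiently close to $0$ (decompose $M-\alpha\otimes\beta$ into its trace part $\Pi I$ and a small traceless $N$ with $c_{23}=c_{32}$) lies in $K^{lc,\Lambda}_{r,s}$, i.e. $0\in\mathcal U_{r,s}$.
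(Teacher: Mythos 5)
Your overall architecture is reasonable and your first step is sound: the ``clean core'' showing $\Sigma\subset K^{lc,\Lambda}_{r,s}$ by two successive laminations is exactly the paper's Lemma \ref{Lemma for whole balls}, and the final covering/compactness step for uniformity is a legitimate (if different) way to organize what the paper does by carrying explicit constants $c'_{\tau,r,s},c''_{\tau,r,s},c'''_{\tau,r,s}$ through each stage. However, the proof has a genuine gap: the entire middle step --- absorbing an arbitrary small $N$ with $c_{23}=c_{32}$ --- is asserted rather than proved. You write that one should ``exhibit an explicit finite laminate, morally a $T_4$- or $T_5$-configuration, whose outer vertices all lie in $\Sigma$'' and that ``one checks that the perturbations produced span \dots exactly the seven transverse $N$-directions,'' but no such laminate is exhibited and no spanning is verified. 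This is not a routine verification that can be deferred: it is the content of Lemmas \ref{Easier directions}, \ref{Five directions}, \ref{Three non-vanishing terms} and \textsection \ref{Completion of the proof of hull lemma}, and the paper explicitly flags it as the technically hardest part of the article.

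The concrete obstruction you would hit is the symmetric off-diagonal component $c_{23}(f_2\otimes f_3+f_3\otimes f_2)$. As observed at the start of \textsection \ref{Completion of the proof of hull lemma}, laminating in any of the natural directions $(f,g)=(cf_i,df_j)$ produces perturbations of $M$ with $c_{23}=c_{32}=0$ unless $c,d\neq 0$ and $\{i,j\}=\{2,3\}$, so a generic ``span the seven transverse directions'' count fails for this one component; and even in the direction $(cf_2,df_3)$ the coefficient one can generate is a \emph{product} of the lamination parameters. The paper's resolution is the two-stage construction of Lemma \ref{Three non-vanishing terms}, where the parameters are chosen anisotropically ($c\sim\epsilon^{1/2}$, $d\sim\epsilon^{1/4}$, $p\sim\epsilon^{1/4}$, $q\sim\epsilon^{1/2}$) so that $2pcd\sim\epsilon$ matches $c_{23}$ while every intermediate vertex still satisfies the smallness needed to lie in $K^{7,\Lambda}_{r,s}$ via Lemma \ref{Five directions}, together with the wave-cone constraints \eqref{Wave cone condition 1}--\eqref{Wave cone condition 5} coupling all the coefficients. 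Moreover, because each splitting perturbs $\alpha$ and $\beta$, the adapted basis $\{f_1,f_2,f_3\}$ and hence the constraint $c_{23}=c_{32}$ change from vertex to vertex, which is a second difficulty your sketch does not address (the paper handles it by arranging that $\alpha-\beta$ stays parallel to $f_1$ at the relevant vertices). Without carrying out this construction, the lemma is not proved.
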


We indicate why Lemma \ref{Hull lemma} implies Theorem \ref{Theorem on nonempty relative interior}. Suppose that $(u,b,S,a) \cong (\alpha,\beta,\alpha \otimes \beta + \Pi I + N)$ is close to the origin and $a \cdot b = 0$. If $\alpha - \beta \neq 0$, then we write $N = \sum_{i,j=1}^3 c_{ij} f_i \otimes f_j$ and all the assumptions of Lemma \ref{Hull lemma} are satisfied for $\tau = 0$ so that $(u,b,S,a) \in K^{lc,\Lambda}_{r,s}$. If on the other hand $\alpha = \beta$, then there exists an orthonormal basis $\{f_1, f_2, f_3\}$ of $\R^3$ such that condition (iii) of Lemma \ref{Hull lemma} holds. Indeed, condition (iii) reads as $c_{23} - c_{32} = (N-N^T) f_3 \cdot f_2 = 0$, and writing $N - N^T \cong z \in \R^3$ we have $(N-N^T) f_3 = z \times f_3$, so that we get (iii) by choosing $f_3 = z/\abs{z}$ if $z \neq 0$ and $f_3 = (1,0,0)$ if $z = 0$. Thus the assumptions of Lemma \ref{Hull lemma} are satisfied and $(u,b,S,a) \in K^{lc,\Lambda}_{r,s}$. Lemma \ref{Hull lemma} is proved in the following two subsections.

\subsection{The case with $c_{23} = 0$} \label{The case of rank-one matrices}

We prove Lemma \ref{Hull lemma} by gradually weakening the assumptions on $N = \sum_{i,j=1}^3 c_{ij} f_i \otimes f_j$ in condition (iii). In this subsection we cover matrices $N$ with $c_{23} = 0$. First, in Lemma \ref{Restriction on next action} we give a geometric characterization of the pairs $V,W \in K$ such that $V-W \in \Lambda$. By iterating the idea we handle, in Lemma \ref{Easier directions}, the case where $N$ is a rank-one matrix, and in Lemma \ref{Five directions} a more elaborate use of $\Lambda$-convex combinations gives every $N$ with $c_{23} = 0$. The significantly harder case $c_{23} \neq 0$ is proved in \textsection \ref{Completion of the proof of hull lemma}.

\begin{lem} \label{Restriction on next action}
Suppose $\alpha, \beta, \gamma, \delta \in \R^3$ and $\Pi
_1,\Pi_2 \in \R$. Then the following conditions are equivalent.

\renewcommand{\labelenumi}{(\roman{enumi})}
\begin{enumerate}
\item $(\alpha, \beta, \alpha \otimes \beta + \Pi_1 I) - ( \gamma, \delta, \gamma \otimes \delta + \Pi_2 I) \in \Lambda$,

\item the four points $\alpha, \beta, \gamma$ and $\delta$ lie on the same hyperplane, that is, in a set of the form $\{x \in \R^3 \colon \langle x, \xi \rangle + c = 0\}$ with $\xi \neq 0$ and $c \in \R$. Furthermore, $\Pi_1 = \Pi_2$.
\end{enumerate}
\end{lem}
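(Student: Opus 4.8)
The plan is to unwind the definition of the wave cone $\Lambda$ directly in the Els\"asser formalism, where $\alpha$ plays the role of $z^+$ and $\beta$ that of $z^-$. Set $M \defeq \alpha \otimes \beta - \gamma \otimes \delta + (\Pi_1-\Pi_2) I$, so that condition (i) says $(\alpha-\gamma,\beta-\delta,M) \in \Lambda$, i.e.\ there exists $(\xi,c) \in (\R^3 \setminus \{0\}) \times \R$ with $\langle \xi, \alpha - \gamma \rangle = \langle \xi, \beta - \delta \rangle = 0$ and $M\xi + c(\alpha-\gamma) = 0$, $M^T \xi + c(\beta-\delta) = 0$ (see \eqref{Wave cone conditions}). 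The orthogonality relations give $\langle \xi, \alpha \rangle = \langle \xi, \gamma \rangle \eqdef p$ and $\langle \xi, \beta \rangle = \langle \xi, \delta \rangle \eqdef q$, whence a direct computation yields $M\xi = q(\alpha-\gamma) + (\Pi_1-\Pi_2)\xi$ and $M^T\xi = p(\beta-\delta) + (\Pi_1-\Pi_2)\xi$. Thus the two remaining wave-cone equations are equivalent to
\[ (q+c)(\alpha-\gamma) = -(\Pi_1-\Pi_2)\xi, \qquad (p+c)(\beta-\delta) = -(\Pi_1-\Pi_2)\xi. \]

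To prove (i) $\Rightarrow$ (ii), pair the first of these identities with $\xi$: since $\langle \xi, \alpha-\gamma \rangle = 0$, we get $(\Pi_1-\Pi_2)\abs{\xi}^2 = 0$, so $\Pi_1 = \Pi_2$ because $\xi \neq 0$. The two identities then reduce to $(q+c)(\alpha-\gamma) = 0$ and $(p+c)(\beta-\delta) = 0$. If $\alpha \neq \gamma$ and $\beta \neq \delta$, this forces $p = q = -c$, so $\alpha,\beta,\gamma,\delta$ all have inner product $-c$ with the nonzero vector $\xi$ and hence lie on the hyperplane $\{x : \langle x, \xi \rangle + c = 0\}$. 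If instead $\alpha = \gamma$ (the case $\beta = \delta$ being symmetric), then $\{\alpha,\beta,\gamma,\delta\} = \{\alpha,\beta,\delta\}$ has affine hull of dimension at most $2$, so there is a nonzero $\eta$ orthogonal to $\beta-\alpha$ and $\delta-\alpha$, and all four points lie on $\{x : \langle x, \eta \rangle = \langle \alpha, \eta \rangle\}$; in either case (ii) holds.

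For the converse, assume $\Pi_1 = \Pi_2$ and $\langle \alpha, \xi \rangle = \langle \beta, \xi \rangle = \langle \gamma, \xi \rangle = \langle \delta, \xi \rangle = -c$ for some $\xi \neq 0$, $c \in \R$. Then $\langle \xi, \alpha-\gamma \rangle = \langle \xi, \beta-\delta \rangle = 0$, and the formulas for $M\xi$ and $M^T\xi$ above (with $p = q = -c$ and $\Pi_1 - \Pi_2 = 0$) give $M\xi = -c(\alpha-\gamma)$ and $M^T\xi = -c(\beta-\delta)$, so $(\xi,c)$ witnesses $(\alpha-\gamma,\beta-\delta,M) \in \Lambda$, which is (i). The whole argument is elementary substitution; the only point needing a little attention is the degenerate case $\alpha = \gamma$ (or $\beta = \delta$) in the forward direction, where one cannot read off a common normal from the wave-cone data $\xi$ and must instead appeal to the trivial fact that three points in $\R^3$ always lie on some hyperplane.
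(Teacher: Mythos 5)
Your proof is correct and takes essentially the same route as the paper's: both directions are obtained by unwinding the wave cone conditions \eqref{Wave cone conditions} for the difference and reducing to the elementary identities $M\xi = q(\alpha-\gamma)+(\Pi_1-\Pi_2)\xi$ and $M^T\xi = p(\beta-\delta)+(\Pi_1-\Pi_2)\xi$, after which pairing with $\xi$ forces $\Pi_1=\Pi_2$ and the remaining equations force the common hyperplane. The one place you go beyond the paper is the explicit treatment of the degenerate case $\alpha=\gamma$ (or $\beta=\delta$), where the wave-cone normal $\xi$ need not itself serve as the normal in (ii) and one must instead invoke the trivial coplanarity of three points in $\R^3$; the paper's proof passes over this case silently, so your added care is welcome rather than redundant.
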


\begin{proof}[Proof]
Suppose $\alpha, \beta, \gamma, \delta \in \{x \in \R^3 \colon \langle x, \xi \rangle + c = 0\}$, where $\xi \in \R^3 \setminus \{0\}$, and $\Pi_1 = \Pi_2$. Then
\begin{equation} \label{Wave cone A}
\langle \alpha - \gamma, \xi \rangle = \langle \beta - \delta, \xi \rangle = 0,
\end{equation}
\begin{equation} \label{Wave cone B}
(\alpha \otimes \beta - \gamma \otimes \delta + \Pi_1 I - \Pi_2 I) \xi
= \langle \beta, \xi \rangle \alpha - \langle \delta, \xi \rangle \gamma
= \langle \delta, \xi \rangle (\alpha - \gamma) = -c(\alpha-\gamma),
\end{equation}
\begin{equation} \label{Wave cone C}
(\beta \otimes \alpha - \delta \otimes \gamma + \Pi_1 I - \Pi_2 I) \xi
= \langle \alpha, \xi \rangle \beta - \langle \gamma, \xi \rangle \delta
= \langle \gamma, \xi \rangle (\beta - \delta) = -c(\beta-\delta),
\end{equation}
and so $( \alpha, \beta, \alpha \otimes \beta + \Pi_1 I) - ( \gamma, \delta, \gamma \otimes \delta + \Pi_2 I)$ satisfies condition \eqref{Wave cone conditions}.

Conversely, if $( \alpha, \beta, \alpha \otimes \beta + \Pi_1 I) - ( \gamma, \delta, \gamma \otimes \delta + \Pi_2 I)$ satisfies \eqref{Wave cone conditions} with $(\xi,c) \in (\R^3 \setminus \{0\}) \times \R$, then \eqref{Wave cone A} follows immediately, and next $(\alpha \otimes \beta - \gamma \otimes \delta + (\Pi_1 - \Pi_2) I) \xi + c \alpha = 0$ implies $\Pi_1 = \Pi_2$, which in turn implies \eqref{Wave cone B}--\eqref{Wave cone C}, and so $\alpha, \beta, \gamma, \delta \in \{x \in \R^3 \colon \langle x, \xi \rangle + c = 0\}$.
\end{proof}

The following lemma covers the case $N = 0$ and gives us freedom in the selection of the points $\alpha,\beta \in \R^3$ in ensuing arguments.

\begin{lem} \label{Lemma for whole balls}
If $\abs{\alpha} \le r$, $\abs{\beta} \le s$ and $\abs{\Pi} \le rs$, then $(\alpha, \beta, \alpha \otimes \beta + \Pi I) \in K_{r,s}^{2,\Lambda}$.
\end{lem}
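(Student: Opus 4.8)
The statement asserts that every point $(\alpha,\beta,\alpha\otimes\beta+\Pi I)$ with $|\alpha|\le r$, $|\beta|\le s$, $|\Pi|\le rs$ lies in $K_{r,s}^{2,\Lambda}$, i.e.\ can be reached from $K_{r,s}$ by at most two rounds of $\Lambda$-convex combinations. The natural strategy is to first adjust $\Pi$ while keeping $\alpha,\beta$ fixed, then adjust $\alpha$ and $\beta$ (which live in balls, not spheres) in a single further step using Lemma \ref{Restriction on next action}.

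\emph{Step 1: prescribe $\Pi$ at fixed $\alpha,\beta$.} Fix $\alpha,\beta$ with $|\alpha|\le r$, $|\beta|\le s$. I would show that for every $\Pi$ with $|\Pi|\le rs$ there exist points of $K_{r,s}$ with the \emph{same} $\alpha,\beta$ but with pressures $\pm rs$ (or two values straddling $\Pi$), whose difference lies in $\Lambda$. By Lemma \ref{Restriction on next action}(ii), $(\alpha,\beta,\alpha\otimes\beta+\Pi_1 I)-(\alpha,\beta,\alpha\otimes\beta+\Pi_2 I)\in\Lambda$ iff $\Pi_1=\Pi_2$ — so I cannot vary only $\Pi$. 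Instead I must also move $\alpha,\beta$: to get the point $(\alpha,\beta,\alpha\otimes\beta+\Pi I)$ I will realize it as a $\Lambda$-convex combination of two points $(\gamma^{\pm},\delta^{\pm},\gamma^{\pm}\otimes\delta^{\pm}+\Pi I)\in K_{r,s}$ where all four vectors $\gamma^+,\delta^+,\gamma^-,\delta^-$ and also $\alpha,\beta$ lie on a common hyperplane, and where $|\gamma^{\pm}|=r$, $|\delta^{\pm}|=s$, $|\Pi|\le rs$. Concretely, pick a hyperplane $H=\{x:\langle x,\xi\rangle=0\}$ containing $\alpha$ and $\beta$ (possible since $\{\alpha,\beta\}$ spans at most a $2$-plane), and within $H$ choose $\gamma^{\pm}$ on the sphere of radius $r$ and $\delta^{\pm}$ on the sphere of radius $s$ so that $\tfrac12(\gamma^++\gamma^-)=\alpha$, $\tfrac12(\delta^++\delta^-)=\beta$, $\tfrac12(\gamma^+\otimes\delta^++\gamma^-\otimes\delta^-)=\alpha\otimes\beta$; the last identity requires $\gamma^+-\gamma^-$ and $\delta^+-\delta^-$ to be parallel, which can be arranged. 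By Lemma \ref{Restriction on next action} the difference of the two endpoints is in $\Lambda$, so the midpoint lies in $K_{r,s}^{1,\Lambda}$.

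\emph{Step 2: handle the constraint $|\Pi|\le rs$ vs.\ the sign.} The delicate point, and the main obstacle, is matching the tensor-product part $\alpha\otimes\beta$ \emph{and} the pressure $\Pi$ simultaneously while keeping $|\gamma^{\pm}|=r$, $|\delta^{\pm}|=s$ and $|\Pi|\le rs$. When $|\Pi|=rs$ the endpoints are forced ($\gamma^{\pm}$ and $\delta^{\pm}$ parallel, pressure extremal), and one may need a second layer of convex combination — hence the exponent $2$ in $K_{r,s}^{2,\Lambda}$ rather than $1$. I would first treat the easy regime $|\Pi|<rs$ (or $\alpha,\beta$ not colinear), getting the point in one step as above, and then treat the boundary cases by a preliminary split: write the desired point as a midpoint of two points with pressures $\Pi'$, $\Pi''$ strictly inside $(-rs,rs)$ whose difference is in $\Lambda$ — which by Lemma \ref{Restriction on next action} again forces a simultaneous move of $\alpha,\beta$ on a hyperplane — and then apply Step 1 to each. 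Careful bookkeeping shows two layers suffice, giving membership in $K_{r,s}^{2,\Lambda}$. The geometry is entirely elementary (choosing vectors of prescribed length on a $2$-plane with prescribed averages and a parallelism condition); the only thing to watch is that the parallelism forces the product of the averaged tensor to come out right, which is where a short computation with $\gamma^{\pm}=\alpha\pm t p$, $\delta^{\pm}=\beta\pm t q$ and $p\parallel q$ pins down the admissible $t$.
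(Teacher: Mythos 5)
There is a genuine gap, and it sits exactly at the point you flag as "a short computation": moving $\alpha$ and $\beta$ simultaneously along one $\Lambda$-segment cannot reproduce the tensor product $\alpha\otimes\beta$. If $\gamma^{\pm}=\alpha\pm tp$, $\delta^{\pm}=\beta\pm tq$ and you take the convex combination with weights $\lambda,\mu$ ($\lambda+\mu=1$, endpoints scaled accordingly), the third component averages to
\[
\alpha\otimes\beta+\lambda\mu\, t^2\, p\otimes q+\Pi I ,
\]
and the rank-one error $p\otimes q$ is nonzero whenever both $p\neq0$ and $q\neq0$; it is never a multiple of $I$, so it cannot be hidden in the pressure. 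Making $p$ and $q$ \emph{parallel} does not kill it -- only $p=0$ or $q=0$ does. But if $|\alpha|<r$ and $|\beta|<s$ strictly, the sphere constraints $|\alpha\pm tp|=r$ and $|\beta\pm tq|=s$ force both $p\neq0$ and $q\neq0$, so your Step 1 cannot produce $(\alpha,\beta,\alpha\otimes\beta+\Pi I)$ in a single lamination step. Your Step 2 is also aimed at a non-issue: in the definition of $K_{r,s}$ the pressure is a free parameter subject only to $|\Pi|\le rs$, independently of $z^{\pm}$, so $\Pi$ never needs to be adjusted and there is no degeneration at $|\Pi|=rs$; by Lemma \ref{Restriction on next action} the pressure must simply be held fixed along every segment, which is harmless.

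The fix -- and the paper's proof -- is to move only one of the two vectors per step, which keeps the tensor product \emph{linear} in the variable being moved. First hold $z^{+}=r\alpha/|\alpha|$ on its sphere and average the two points $(r\alpha/|\alpha|,\pm s\beta/|\beta|, r\alpha/|\alpha|\otimes(\pm s\beta/|\beta|)+\Pi I)\in K_{r,s}$, whose difference is of the form $(0,\beta',f\otimes\beta')\in\Lambda$ by Lemma \ref{Wave cone proposition 3}; the weights are chosen so that the second slot averages to $\beta$, and since the first slot is constant the matrix slot automatically averages to $r\alpha/|\alpha|\otimes\beta+\Pi I$. This gives $(\pm r\alpha/|\alpha|,\beta,\pm r\alpha/|\alpha|\otimes\beta+\Pi I)\in K^{1,\Lambda}_{r,s}$. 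Then average this pair, whose difference $(\alpha',0,\alpha'\otimes\beta)$ lies in $\Lambda$ by Lemma \ref{Wave cone proposition 2} (case $k=0$), to land on $(\alpha,\beta,\alpha\otimes\beta+\Pi I)\in K^{2,\Lambda}_{r,s}$. So two steps are indeed needed, but for the bilinearity reason above, not because of any boundary behaviour of $\Pi$.
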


\begin{proof}
We prove the case $0 < \abs{\alpha} \le r$, $0 < \abs{\beta} \le s$, the remaining cases being similar. By Lemma \ref{Wave cone proposition 3}, the difference of the triples $(r \alpha/\abs{\alpha}, \pm s \beta/\abs{\beta}, r \alpha/\abs{\alpha} \otimes \pm s \beta/\abs{\beta} + \Pi I) \in K_{r,s}$ is in $\Lambda$, and so taking a suitable convex combination we get $(r \alpha/\abs{\alpha}, \beta, r \alpha/\abs{\alpha} \otimes \beta + \Pi I) \in K^{1,\Lambda}_{r,s}$. Similarly, $(-r \alpha/\abs{\alpha}, \beta, -r \alpha/\abs{\alpha} \otimes \beta + \Pi I) \in K^{1,\Lambda}_{r,s}$, and taking a $\Lambda$-convex combination again we obtain $(\alpha, \beta, \alpha \otimes \beta + \Pi I) \in K_{r,s}^{2,\Lambda}$.
\end{proof}

We next prove the case of rank-one matrices $N = c_{ij} f_i \otimes f_j$ with $\{i,j\} \neq \{2,3\}$ -- in fact, we prove a slightly more general statement. Recall that $f_1 = (\alpha-\beta)/\abs{\alpha-\beta}$ when $\alpha \neq \beta$.

\begin{lem} \label{Easier directions}
There exists a constant $c_{\tau,r,s}^\prime \in ]0,1[$ with the following property. If $\abs{\Pi} \le rs$, $\max\{\abs{\abs{\alpha} - \tau r}, \abs{\abs{\beta} - \tau s}, \abs{a}\} \le c_{\tau,r,s}^1$ and $e \in S^2$, then
\[(\alpha,\beta,\alpha \otimes \beta + a e \otimes e + \Pi I), (\alpha,\beta,\alpha \otimes \beta + a f_1 \otimes e + \Pi I), (\alpha,\beta,\alpha \otimes \beta + a e \otimes f_1 + \Pi I) \in K_{r,s}^{3,\Lambda}.\]
\end{lem}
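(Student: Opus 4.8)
The plan is to realise each of the three triples as the midpoint of a single $\Lambda$-segment whose two endpoints already lie in $K^{2,\Lambda}_{r,s}$, so that the midpoint lands in $K^{3,\Lambda}_{r,s}$. Write the desired rank-one perturbation uniformly as $a\,v\otimes w$, where $(v,w)$ is $(e,e)$, $(f_1,e)$ or $(e,f_1)$; it suffices to treat a general such pair. First I would pick $\epsilon_1,\epsilon_2\in\R$ with $\abs{\epsilon_1}=\abs{\epsilon_2}=\abs{a}^{1/2}$ and signs chosen so that $\epsilon_1\epsilon_2=a$, and set $\alpha^\pm\defeq\alpha\pm\epsilon_1 v$ and $\beta^\pm\defeq\beta\pm\epsilon_2 w$. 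Expanding the tensor products, the cross terms cancel and one gets
\[
\tfrac12(\alpha^+,\beta^+,\alpha^+\otimes\beta^++\Pi I)+\tfrac12(\alpha^-,\beta^-,\alpha^-\otimes\beta^-+\Pi I)=(\alpha,\beta,\alpha\otimes\beta+a\,v\otimes w+\Pi I).
\]

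Next I would verify that the two endpoints differ by a wave-cone direction. By Lemma \ref{Restriction on next action} this reduces to exhibiting a hyperplane $\{x:\langle x,\xi\rangle+c=0\}$, $\xi\neq0$, containing $\alpha^+,\beta^+,\alpha^-,\beta^-$ (the pressures already agree). Since $\alpha^\pm=\alpha\pm\epsilon_1 v$ and $\beta^\pm=\beta\pm\epsilon_2 w$, such a $\xi$ works exactly when $\xi\perp v$, $\xi\perp w$ and $\xi\perp(\alpha-\beta)$, with $c$ then taken to be $-\langle\alpha,\xi\rangle$. As $\alpha-\beta$ is a scalar multiple of $f_1$, and in each of the three cases $\{v,w,f_1\}$ spans a subspace of $\R^3$ of dimension at most two (either $v=w$, or $f_1\in\{v,w\}$), the orthogonal complement of $\operatorname{span}\{v,w,f_1\}$ is non-trivial and a suitable $\xi\neq0$ exists. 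This dimension count is the one genuinely non-routine point of the argument; everything else is quantitative bookkeeping.

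Finally I would choose $c'_{\tau,r,s}\in ]0,1[$ small enough that $c'_{\tau,r,s}+(c'_{\tau,r,s})^{1/2}<(1-\tau)\min\{r,s\}$, which is possible precisely because $\tau<1$. Under the hypotheses $\max\{\abs{\abs{\alpha}-\tau r},\abs{\abs{\beta}-\tau s},\abs{a}\}\le c'_{\tau,r,s}$ and $\abs{\Pi}\le rs$, one then has $\abs{\alpha^\pm}\le\abs{\alpha}+\abs{a}^{1/2}\le\tau r+c'_{\tau,r,s}+(c'_{\tau,r,s})^{1/2}<r$ and likewise $\abs{\beta^\pm}<s$, so Lemma \ref{Lemma for whole balls} gives $(\alpha^\pm,\beta^\pm,\alpha^\pm\otimes\beta^\pm+\Pi I)\in K^{2,\Lambda}_{r,s}$. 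The $\Lambda$-convex combination displayed above therefore lies in $K^{3,\Lambda}_{r,s}$. Carrying this out with $(v,w)=(e,e)$, $(f_1,e)$ and $(e,f_1)$ in turn yields the three asserted memberships, with $c'_{\tau,r,s}$ depending only on $\tau,r,s$.
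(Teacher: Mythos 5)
Your proposal is correct and follows essentially the same route as the paper: realise $(\alpha,\beta,\alpha\otimes\beta+a\,v\otimes w+\Pi I)$ as the midpoint of a $\Lambda$-segment between $(\alpha\pm\epsilon_1 v,\beta\pm\epsilon_2 w,\cdot)$, use Lemma \ref{Lemma for whole balls} to place the endpoints in $K^{2,\Lambda}_{r,s}$, and use Lemma \ref{Restriction on next action} together with the observation that $\operatorname{span}\{v,w,f_1\}$ is at most two-dimensional to check the wave-cone condition. The only (immaterial) difference is your symmetric splitting $\abs{\epsilon_1}=\abs{\epsilon_2}=\abs{a}^{1/2}$ versus the paper's choice of factors proportional to $r$ and $s$, which merely changes the explicit constant $c'_{\tau,r,s}$.
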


\begin{proof}[Proof]
We first prove the case $N = a e \otimes e$. Choose $c_{\tau,r,s}^\prime \defeq (1-\tau)^2 rs/4(r+s+1)^2$. If $\max\{\abs{\abs{\alpha} - \tau r}, \abs{\abs{\beta} - \tau s}, \abs{a}\} \le c_{\tau,r,s}^\prime$ and $e \in S^2$, write $a = k c_{\tau,r,s}^\prime$, where $\abs{k} \le 1$. Then choose $b = k (1-\tau) r/2(r+s+1)$ and $c = d (1-\tau) s/2(r+s+1)$ so that $b c = a$. Now $\abs{\alpha \pm b e} \le r$ and $\abs{\beta \pm ce} \le s$, and so Lemma \ref{Lemma for whole balls} gives
\[V^\pm \defeq \left( \alpha \pm b e, \beta \pm c e, \left( \alpha \pm b e\right) \otimes (\beta \pm c e) + \Pi I \right) \in K_{r,s}^{2,\Lambda}.\]
The points $\alpha \pm b e$, $\beta \pm c e \in \R^3$ all belong to the same hyperplane (or straight) $\beta + \text{span} \{f_1,e\}$, and thus, by Lemma \ref{Restriction on next action}, $(V^++V^-)/2 = \left( \alpha,\beta, \alpha \otimes \beta + a e \otimes e + \Pi I \right) \in K_{r,s}^{3,\Lambda}$.

The case $N = a f_1 \otimes e$ is proved by setting
\[V^\pm \defeq \left( \alpha \pm b f_1, \beta \pm c e, \left( \alpha \pm b f_1\right) \otimes (\beta \pm c e) + \Pi I \right) \in K_{r,s}^{2,\Lambda}\]
and repeating the argument above. The case $N = a e \otimes f_1$ is similar.
\end{proof}

With the conclusion of Lemma \ref{Hull lemma} demonstrated for rank-one matrices $N$, we now prove the general case where $c_{23} = 0$. We present the result in an equivalent form that is easier to use in \textsection \ref{Completion of the proof of hull lemma}.

\begin{lem} \label{Five directions}
There exists $c_{\tau,r,s}^{\prime \prime} \in ]0,1[$ with the following property. Whenever $g_1,\ldots,g_5 \in \mathbb{S}^2$, $d_1,\ldots,d_5 \in \R$, $\max\{\abs{\abs{\alpha} - \tau r}, \abs{\abs{\beta} - \tau s}, \abs{d_1},\ldots, \abs{d_5}\} \le c_{\tau,r,s}^{\prime \prime}$ and $\abs{\Pi} \le rs$, we have 
\[(\alpha,\beta,\alpha \otimes \beta + d_1 g_1 \otimes g_1 + d_2 g_2 \otimes g_2 + d_3 g_3 \otimes g_3 + d_4 f_1 \otimes g_4 + d_5 g_5 \otimes f_1 + \Pi I) \in K_{r,s}^{7,\Lambda}.\]
\end{lem}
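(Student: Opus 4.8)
The plan is to strip off the five rank-one perturbations by a tree of $\Lambda$-convex combinations of depth $5$ whose leaves land in $K_{r,s}^{2,\Lambda}$ by Lemma \ref{Lemma for whole balls}, giving $2+5=7$ levels. Write $N \defeq d_1 g_1 \otimes g_1 + d_2 g_2 \otimes g_2 + d_3 g_3 \otimes g_3 + d_4 f_1 \otimes g_4 + d_5 g_5 \otimes f_1$ and decompose it into its symmetric and antisymmetric parts $N = \Sigma + A$. A short computation gives that $\Sigma$ is symmetric with $\norm{\Sigma} \le \sum_i \abs{d_i}$ and that $A = f_1 \otimes h - h \otimes f_1$ with $h \defeq (d_4 g_4 - d_5 g_5)/2$; replacing $h$ by its component orthogonal to $f_1$ leaves $A$ unchanged, so we may assume $h \perp f_1$ and write $A = \eta(f_1 \otimes e - e \otimes f_1)$ with $e \defeq h/\abs{h} \in \mathbb{S}^2$, $e \perp f_1$, $\eta \defeq \abs{h}$ (if $h=0$ then $A=0$ and the construction below collapses). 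All vectors added to $\alpha$ and $\beta$ below have norm $O(\sqrt{c_{\tau,r,s}^{\prime\prime}})$, so by choosing $c_{\tau,r,s}^{\prime\prime}$ small in terms of $1-\tau$, $r$, $s$ one keeps every auxiliary $z^+,z^-$ inside $\abs{z^+}\le r$, $\abs{z^-}\le s$; this is the only size bookkeeping, and I suppress it.

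\textbf{The symmetric step.} I would first record: if $\gamma,\delta\in\R^3$ have $\abs{\gamma},\abs{\delta}$ close enough to $\tau r,\tau s$, if $\abs{\Pi}\le rs$, and if $\Sigma'$ is symmetric with $\norm{\Sigma'}$ small, then $(\gamma,\delta,\gamma\otimes\delta+\Sigma'+\Pi I)\in K_{r,s}^{5,\Lambda}$. This follows by diagonalising $\Sigma' = \sum_{i=1}^{3}\lambda_i e_i\otimes e_i$ (orthonormal $e_i$, $\abs{\lambda_i}\lesssim\norm{\Sigma'}$) and peeling the three terms one after another exactly as in the proof of Lemma \ref{Easier directions}: writing $\lambda_i=b_ic_i$ with $\abs{b_i}=\abs{c_i}=\abs{\lambda_i}^{1/2}$, the two points $(\gamma\pm b_ie_i,\ \delta\pm c_ie_i,\ (\gamma\pm b_ie_i)\otimes(\delta\pm c_ie_i)+\cdots+\Pi I)$ differ by an element of $\Lambda$ because the four endpoints lie on two lines parallel to $e_i$, hence on a common plane (Lemma \ref{Restriction on next action}); iterating down to Lemma \ref{Lemma for whole balls} costs $2+3=5$ levels.

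\textbf{The antisymmetric step.} The crux is removing $A=\eta(f_1\otimes e-e\otimes f_1)$ at the cost of only two levels, by a nested pair of midpoints. For the outer split set
\[T^\pm \defeq \bigl(\alpha \pm \sqrt{\eta}\,f_1,\ \beta \pm \sqrt{\eta}\,e,\ (\alpha \pm \sqrt{\eta}\,f_1)\otimes(\beta \pm \sqrt{\eta}\,e) + \Sigma - \eta\,e\otimes f_1 + \Pi I\bigr),\]
so the matrix of $\tfrac12(T^++T^-)$ is $\alpha\otimes\beta+\eta\,f_1\otimes e+\Sigma-\eta\,e\otimes f_1+\Pi I=\alpha\otimes\beta+N+\Pi I$; since $\alpha-\beta\in\R f_1$, the four points $\alpha\pm\sqrt{\eta}\,f_1$, $\beta\pm\sqrt{\eta}\,e$ lie on the plane $\beta+\operatorname{span}\{f_1,e\}$, so $T^+-T^-\in\Lambda$ by Lemma \ref{Restriction on next action}. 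For the inner split, with $\alpha^\pm\defeq\alpha\pm\sqrt{\eta}\,f_1$ and $\beta^\pm\defeq\beta\pm\sqrt{\eta}\,e$, set
\[T^{\pm,\pm} \defeq \bigl(\alpha^\pm \pm \sqrt{\eta}\,e,\ \beta^\pm \mp \sqrt{\eta}\,f_1,\ (\alpha^\pm \pm \sqrt{\eta}\,e)\otimes(\beta^\pm \mp \sqrt{\eta}\,f_1) + \Sigma + \Pi I\bigr),\]
whose midpoint has matrix $\alpha^\pm\otimes\beta^\pm-\eta\,e\otimes f_1+\Sigma+\Pi I$, the matrix of $T^\pm$; the key observation is that $\alpha^\pm-\beta^\pm=(\alpha-\beta)\pm\sqrt{\eta}(f_1-e)\in\operatorname{span}\{f_1,e\}$, so the four endpoints of the inner split again lie on a translate of $\operatorname{span}\{f_1,e\}$ and $T^{\pm,+}-T^{\pm,-}\in\Lambda$ by Lemma \ref{Restriction on next action}. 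Each leaf $T^{\pm,\pm}$ has the form $(\gamma,\delta,\gamma\otimes\delta+\Sigma+\Pi I)$ with $\abs{\gamma},\abs{\delta}$ within $O(\sqrt{\eta})$ of $\tau r,\tau s$, so the symmetric step places it in $K_{r,s}^{5,\Lambda}$; the two nested splits then give $T^\pm\in K_{r,s}^{6,\Lambda}$ and finally $(\alpha,\beta,\alpha\otimes\beta+N+\Pi I)\in K_{r,s}^{7,\Lambda}$. (When $\alpha=\beta$ the same computation works with $f_1$ any fixed unit vector orthogonal to $e$.)

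\textbf{Main obstacle.} The difficulty — and the reason the two off-diagonal terms are grouped with $f_1$ rather than treated like the diagonal ones — is the coplanarity requirement in Lemma \ref{Restriction on next action}: a single midpoint absorbs only a rank-one cross term, and to absorb one proportional to $f_1\otimes e$ or $e\otimes f_1$ the four endpoints must be coplanar, which forces a split direction to be $\alpha-\beta$; peeling such a term destroys the alignment $\alpha-\beta\parallel f_1$, so a naive one-term-at-a-time induction breaks down. The device above works precisely because after removing $\eta\,f_1\otimes e$ the displacement $\alpha^\pm-\beta^\pm$ still lies in the \emph{two}-dimensional space $\operatorname{span}\{f_1,e\}$, which is exactly the plane needed to absorb the remaining $-\eta\,e\otimes f_1$. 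The verification that $c_{\tau,r,s}^{\prime\prime}$ may be chosen uniformly so that all intermediate $z^\pm$ stay in the relevant balls is routine and is omitted from this sketch.
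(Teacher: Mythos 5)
Your argument is correct (modulo the size bookkeeping you explicitly defer, which is indeed routine) and lands in $K^{7,\Lambda}_{r,s}$ as required, but it takes a genuinely different route from the paper's. The paper keeps the five given rank-one summands intact: it first splits in a direction $(0,2d_6 f_1,\,\cdot\,)\in\Lambda$ supplied by Lemma \ref{Wave cone proposition 3}, placing $2d_4\, f_1\otimes g_4$ and $2d_5\, g_5\otimes f_1$ on separate branches, and then peels the remaining rank-one terms on each branch one at a time with Lemma \ref{Easier directions}. You instead decompose $N$ into symmetric plus antisymmetric parts, diagonalise the symmetric part and peel three eigendirections (a five-level step that uses only the parallel-lines case of the coplanarity criterion of Lemma \ref{Restriction on next action}, and hence --- importantly --- works for arbitrary base points $\gamma,\delta$ rather than only those with $\gamma-\delta$ parallel to $f_1$), and absorb the antisymmetric part $\eta(f_1\otimes e-e\otimes f_1)$ by nested midpoints in the directions $(f_1,e)$ and $(e,-f_1)$, the point being that $\alpha^\pm-\beta^\pm$ remains in $\operatorname{span}\{f_1,e\}$ so that coplanarity survives the inner split. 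I checked both midpoint identities and both coplanarity claims; they are right, and the level count $2+3+1+1=7$ matches. What your version buys is a structural explanation of the hypothesis $c_{23}=c_{32}$ in Lemma \ref{Hull lemma}: the antisymmetric part of any admissible perturbation is forced to be $f_1\otimes h-h\otimes f_1$ with $h$ (after projection) orthogonal to $f_1$, which is exactly what your two-level device digests. What the paper's version buys is the avoidance of diagonalisation and intermediate points whose explicit form feeds directly into the constants of Lemma \ref{Three non-vanishing terms}. The one point to be careful about in a full write-up is the order of quantifiers: the constant of your symmetric step must be fixed first, and $c_{\tau,r,s}^{\prime\prime}$ then chosen so small that the leaves of the antisymmetric split, which sit only within $O(\sqrt{c_{\tau,r,s}^{\prime\prime}})$ of $(\alpha,\beta)$, still satisfy its hypotheses.
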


\begin{proof}
First assume $d_3 = d_4 = d_5 = 0$ and $\max\{\abs{\abs{\alpha} - \tau r}, \abs{\abs{\beta} - \tau s}, \abs{d_1},\abs{d_2}\} \le (c_{\tau,r,s}^\prime)^2/2$. Writing $b = \sqrt{\abs{d_2}}$ and $c = \text{sgn}(d_2) \sqrt{\abs{d_2}}$, Lemma \ref{Easier directions} gives
\[V^{\pm} \defeq (\alpha \pm b g_2, \beta \pm c g_2, (\alpha \pm b g_2) \otimes (\beta \pm c g_2) + d_1 g_1 \otimes g_1 + \Pi I) \in K^{3,\Lambda}_{r,s}\]
and Lemma \ref{Wave cone proposition 2} gives $V^+-V^- \in \Lambda$, so that $(\alpha, \beta, \alpha \otimes \beta + d_1 g_1 \otimes g_1 + d_2 g_2 \otimes g_2 + \Pi I) = (V^+ + V^-)/2 \in K^{4,\Lambda}_{r,s}$.

We next consider the general case. Set $c_{\tau,r,s}^{\prime \prime} \defeq (c_{\tau,r,s}^\prime)^2/16$ and suppose now that we have $\max\{\abs{\abs{\alpha} - \tau r}, \abs{\abs{\beta} - \tau s}, \abs{d_1},\ldots, \abs{d_5}\} \le c_{\tau,r,s}^{\prime \prime}$. By choosing $d_6 \neq 0$ with $\abs{d_6} \le c_{\tau,r,s}^{\prime \prime}$ and repeatedly using Lemma \ref{Easier directions} as above,
\[V \defeq (\alpha,\beta + d_6 f_1, \alpha \otimes (\beta + d_6 f_1) + d_1 g_1 \otimes g_1 + d_2 g_2 \otimes g_2 + d_3 g_3 \otimes g_3 + 2 d_4 f_1 \otimes g_4 + \Pi I),\]
\[W = (\alpha,\beta - d_6 f_1, \alpha \otimes (\beta - d_6 f_1) + d_1 g_1 \otimes g_1 + d_2 g_2 \otimes g_2 + d_3 g_3 \otimes g_3 + 2 d_5 g_5 \otimes f_1 + \Pi I)\]
belong to $K^{6,\Lambda}_{r,s}$. (Here we used the fact that $\alpha-(\beta \pm d_6 f_1)$ are parallel to $f_1$.) Lemma \ref{Wave cone proposition 3} gives
\[V-W = \left( 0, 2 d_6 f_1, \left( \alpha - \frac{d_5}{d_6} g_5 \right) \otimes 2 d_6 f_1 + 2 d_6 f_1 \otimes \frac{d_4}{d_6} g_4 \right) \in \Lambda,\]
and the claim follows by taking the average of $V$ and $W$.
\end{proof}

\subsection{Completion of the proof of Lemma \ref{Hull lemma}} \label{Completion of the proof of hull lemma}

One of the difficulties in proving the case $N = \sum_{i,j=1}^3 c_{ij} f_i \otimes f_j$ with $c_{23} = c_{32} \neq 0$ is the fact that the basis $\{f_1,f_2,f_3\}$ of $\R^3$ depends on $\alpha$ and $\beta$. We wish to use Lemma \ref{Five directions} to write
\[\begin{array}{lcl}
& & \displaystyle \left( \alpha,\beta,\alpha \otimes \beta + \sum_{i,j=1}^3 c_{ij} f_i \otimes f_j + \Pi I \right) \\
&=& \displaystyle \lambda V_1 + \mu V_2 \\
&\defeq& \displaystyle \lambda \left( \alpha + \mu f,\beta + \mu g,(\alpha + \mu f) \otimes (\beta + \mu g) + \sum_{i,j=1}^3 c_{ij}' f_i' \otimes f_j' + \Pi I \right) \\
&+& \displaystyle \mu \left( \alpha - \lambda f,\beta - \lambda g,(\alpha - \lambda f) \otimes (\beta - \lambda g) + \sum_{i,j=1}^3 c_{ij}^{\prime \prime} f_i^{\prime \prime} \otimes f_j^{\prime \prime} + \Pi I \right),
\end{array}\]
where $f_1'$ and $\alpha + \mu f - (\beta + \mu g)$ are parallel, $c_{23}' = c_{32}' = 0$, similar conditions hold for $V_2$ and furthermore $0 < \lambda < 1$, $\lambda + \mu = 1$ and $V_1 - V_2 \in \Lambda$. This leads, by necessity, to much more complicated computations than the ones done in \textsection \ref{The case of rank-one matrices}.

In particular, when $\alpha \neq \beta$, natural directions of the form $(f,g) = (c f_i, d f_j)$ with $c,d \in \R$ and $i,j \in \{1,2,3\}$ always lead to $c_{23} = c_{32} = 0$ unless $c,d \neq 0$ and $\{i,j\} = \{2,3\}$. With this specific choice of $(f,g)$ we are, however, eventually able to achieve $c_{23} = c_{32} \neq 0$. We finish the proof of Lemma \ref{Hull lemma} in two steps, starting with the following lemma.

\begin{lem} \label{Three non-vanishing terms}
There exists a constant $c_{\tau,r,s}^{\prime \prime \prime} \in ]0,1[$ such that if at most one of $c_{12}, c_{13}, c_{21}$ and $c_{31}$ nonzero and $\max\{\abs{\abs{\alpha} - \tau r}, \abs{\abs{\beta} - \tau s}, \abs{c_{ij}}, \abs{\Pi} - \tau^2 rs\} \le c_{\tau,r,s}^{\prime \prime \prime}$, then we have $(\alpha, \beta, \alpha \otimes \beta + \sum_{i,j=1}^3 c_{ij} f_i \otimes f_j + \Pi I) \in K^{9,\Lambda}_{r,s}$.
\end{lem}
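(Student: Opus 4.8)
The plan is to reduce Lemma~\ref{Three non-vanishing terms} to Lemma~\ref{Five directions} by rewriting $N=\sum_{i,j=1}^3 c_{ij}f_i\otimes f_j$ in the shape admitted there; no new lamination is needed at this stage. We are in the setting of Lemma~\ref{Hull lemma}, so $c_{23}=c_{32}\eqdef e$, and the block $B\defeq c_{22}f_2\otimes f_2+c_{33}f_3\otimes f_3+e(f_2\otimes f_3+f_3\otimes f_2)$ is a symmetric matrix supported on $\operatorname{span}\{f_2,f_3\}$. The spectral theorem therefore produces orthonormal $v_+,v_-$ in $\operatorname{span}\{f_2,f_3\}\subset\R^3$ and reals $\lambda_\pm$ with $B=\lambda_+v_+\otimes v_++\lambda_-v_-\otimes v_-$ and $\max\{\abs{\lambda_+},\abs{\lambda_-}\}\le\sqrt{c_{22}^2+c_{33}^2+2e^2}\le 2c_{\tau,r,s}^{\prime\prime\prime}$. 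The five entries of $N$ that carry the index $1$ are grouped as one ``$f_1$-row'' and one ``$f_1$-column'': with $p\defeq c_{12}f_2+c_{13}f_3$ and $q\defeq c_{21}f_2+c_{31}f_3$ one has $N=c_{11}f_1\otimes f_1+B+f_1\otimes p+q\otimes f_1$. Writing $p=d_4g_4$ and $q=d_5g_5$ with $g_4,g_5\in\mathbb{S}^2$ and $d_4=\abs{p}$, $d_5=\abs{q}$ (and $d_4=0$, resp.\ $d_5=0$, with $g_4$, resp.\ $g_5$, an arbitrary unit vector, when $p=0$, resp.\ $q=0$), the hypothesis that at most one of $c_{12},c_{13},c_{21},c_{31}$ is nonzero gives $\abs{d_4},\abs{d_5}\le c_{\tau,r,s}^{\prime\prime\prime}$.

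Assembling the pieces,
\[\alpha\otimes\beta+N+\Pi I=\alpha\otimes\beta+\lambda_+v_+\otimes v_++\lambda_-v_-\otimes v_-+c_{11}f_1\otimes f_1+d_4f_1\otimes g_4+d_5g_5\otimes f_1+\Pi I,\]
which is precisely the expression occurring in Lemma~\ref{Five directions} with $(g_1,g_2,g_3)=(v_+,v_-,f_1)$ and $(d_1,d_2,d_3)=(\lambda_+,\lambda_-,c_{11})$. All the coefficients $d_1,\dots,d_5$ are bounded by $2c_{\tau,r,s}^{\prime\prime\prime}$, so choosing $c_{\tau,r,s}^{\prime\prime\prime}\defeq\min\{\tfrac12 c_{\tau,r,s}^{\prime\prime},(1-\tau^2)rs\}$ guarantees $\max\{\abs{\abs{\alpha}-\tau r},\abs{\abs{\beta}-\tau s},\abs{d_1},\dots,\abs{d_5}\}\le c_{\tau,r,s}^{\prime\prime}$ and $\abs{\Pi}\le\tau^2rs+c_{\tau,r,s}^{\prime\prime\prime}\le rs$. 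Hence Lemma~\ref{Five directions} applies and yields $(\alpha,\beta,\alpha\otimes\beta+N+\Pi I)\in K_{r,s}^{7,\Lambda}\subseteq K_{r,s}^{9,\Lambda}$.

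I do not expect a genuine obstacle in this particular lemma: once one observes that the off-diagonal coefficient of the $\{2,3\}$-block can be rotated away by the spectral theorem (this is where $c_{23}=c_{32}$ is used) and that the index-$1$ entries collapse into a single rank-one row plus a single rank-one column, the statement is a direct appeal to Lemma~\ref{Five directions}; the only things requiring a little care are the degenerate cases $B=0$, $p=0$ or $q=0$ and the elementary bound on $\lambda_\pm$. The real difficulty is deferred to the remaining step, which treats $N$ with all of $c_{12},c_{13},c_{21},c_{31}$ potentially nonzero and is, as indicated in the discussion preceding this lemma, forced to build the needed lamination from the special directions $(f,g)=(cf_2,df_3)$.
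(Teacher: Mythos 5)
Your proof is correct, but it is a genuinely different and substantially shorter route than the paper's. The paper treats the symmetric off-diagonal entry $c_{23}=c_{32}\neq 0$ as the hard part: guided by the observation that basis-aligned directions $(cf_i,df_j)$ can only produce $f_2\otimes f_3$ and $f_3\otimes f_2$ with equal coefficients when $\{i,j\}=\{2,3\}$ and $c,d\neq 0$, it builds the term $c_{23}(f_2\otimes f_3+f_3\otimes f_2)$ by hand through a two-stage lamination: a pair $V,W\in K^{7,\Lambda}_{r,s}$ shifted in the directions $(cf_2,df_3)$, with free coefficients $p,q$ chosen to satisfy the wave-cone compatibility conditions \eqref{Wave cone condition 1}--\eqref{Wave cone condition 5}, followed by a further $\Lambda$-combination of two intermediate matrices $N'$ and $N''$ to kill the unwanted $f_1\otimes f_2$ and $f_1\otimes f_3$ residue; this lands in $K^{9,\Lambda}_{r,s}$. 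You instead observe that the $\{2,3\}$-block is symmetric precisely because $c_{23}=c_{32}$, diagonalize it, and feed the eigenvectors $v_\pm$ into Lemma \ref{Five directions} as $g_1,g_2$; a rank-one perturbation $bc\,v\otimes v$ with $v\in\operatorname{span}\{f_2,f_3\}\setminus\{\pm f_2,\pm f_3\}$ automatically carries equal $f_2\otimes f_3$ and $f_3\otimes f_2$ components, which is exactly the loophole the paper's basis-aligned analysis does not exploit. Your argument is legitimate as written: Lemma \ref{Five directions} is stated for arbitrary $g_1,\dots,g_5\in\mathbb{S}^2$, its proof (Lemma \ref{Wave cone proposition 2} applied to differences $(2bg,2cg,\,2c\,\alpha\otimes g+2bg\otimes\beta)$, plus Lemma \ref{Restriction on next action} for the $f_1$-terms, added last) never uses any alignment of the $g_i$ with the $f$-basis, and the paper itself invokes that lemma at shifted base points where the $g_i$ are not adapted to the new difference vector, so you are on the same footing. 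Your route lands in $K^{7,\Lambda}_{r,s}\subset K^{9,\Lambda}_{r,s}$, and --- worth noting --- it never actually uses the hypothesis that at most one of $c_{12},c_{13},c_{21},c_{31}$ is nonzero, since $f_1\otimes p$ and $q\otimes f_1$ absorb all four entries (with $\abs{p},\abs{q}\le\sqrt{2}\,c_{\tau,r,s}^{\prime\prime\prime}$); if this is right, the same paragraph proves Lemma \ref{Hull lemma} in full and the completion step in \textsection\ref{Completion of the proof of hull lemma} becomes redundant. That is a strong enough consequence that you should state it explicitly and double-check Lemma \ref{Five directions} once more before relying on it, but I see no gap: the only points needing care are the degenerate cases $p=0$, $q=0$ or $B=0$ and the elementary eigenvalue bound, both of which you handle.
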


\begin{proof}
We will find a constant $c_{\tau,r,s}^{\prime \prime \prime}$ for the case where $c_{13} = c_{21} = c_{31} = 0$; the modifications required for the other three cases are rather obvious. We will write $N = 2 N^\prime/3 + N^{\prime \prime}/3$, where $N^\prime \defeq \sum_{i,j=1}^3 c_{ij}' f_i \otimes f_j$ satisfies $c_{ij}' = c_{ij}$ for all $(i,j) \notin \{(1,2),(1,3)\}$ and the same condition holds for $N^{\prime \prime} \defeq \sum_{i,j=1}^3 c_{ij}^{\prime \prime}$, so that $(0,0,N'-N^{\prime \prime}) \in \Lambda$. In turn, we will write
\[(\alpha,\beta,\alpha \otimes \beta + N^\prime + \Pi I)
= \frac{1}{2} (V+W),\]
and similarly for $N^{\prime \prime}$, where $V,W \in K^{7,\Lambda}_{r,s}$ satisfy $V-W \in \Lambda$, fall under the scope of Lemma \ref{Five directions} and will be specified below.

We initially work under the full generality allowed by Lemma \ref{Five directions}. Recall that $\alpha-\beta = \abs{\alpha-\beta} f_1$. By Lemma \ref{Five directions}, whenever all the coefficients of $V$ and $W$ defined below are small enough, we have
\begin{align*}
V
&= (\alpha + c f_2, \beta + d f_3, (\alpha + c f_2) \otimes (\beta + d f_3) \\
&+ v_{f_1 \otimes f_1} f_1 \otimes f_1 + v_{f_2 \otimes f_2} f_2 \otimes f_2 + v_{f_3 \otimes f_3} f_3 \otimes f_3 \\
&+ (\abs{\alpha-\beta} f_1 + c f_2 - d f_3) \otimes (a_1 f_1 + a_2 f_2 + a_3 f_3) \\
&+ (b_1 f_1 + b_2 f_2 + b_3 f_3) \otimes (\abs{\alpha-\beta} f_1 + c f_2 - d f_3) + \Pi I) \in K^{7,\Lambda}_{r,s},
\end{align*}
\begin{align*}
W
&= (\alpha - c f_2, \beta - d f_3, (\alpha - c f_2) \otimes (\beta - d f_3) \\
&+ w_{f_1 \otimes f_1} f_1 \otimes f_1 + w_{f_2 \otimes f_2} f_2 \otimes f_2 + w_{f_3 \otimes f_3} f_3 \otimes f_3 \\
&+ (\abs{\alpha-\beta} f_1 - c f_2 + d f_3) \otimes (c_1 f_1 + c_2 f_2 + c_3 f_3) \\
&+ (d_1 f_1 + d_2 f_2 + d_3 f_3) \otimes (\abs{\alpha-\beta} f_1 - c f_2 + d f_3) + \Pi I) \in K^{7,\Lambda}_{r,s}.
\end{align*}
We will choose $c,d \neq 0$, and so, by Lemma \ref{Wave cone proposition 1} and the fact that
\[\alpha \otimes 2 d f_3 +  2 c f_2 \otimes \beta = \beta \otimes 2 d f_3 + 2 c f_2 \otimes \beta + \abs{\alpha-\beta} f_1 \otimes 2 d f_3,\]
the condition $V - W \eqdef (2c f_2,2d f_3, d_{f_1 \otimes f_1} f_1 \otimes f_1 + d_{f_1 \otimes f_2} f_1 \otimes f_2 + \cdots + d_{f_3 \otimes f_3} f_3 \otimes f_3) \in \Lambda$ reads as
\begin{align}
 d_{f_1 \otimes f_1} &= \abs{\alpha-\beta} (a_1 + b_1 - c_1 - d_1) + v_{f_1 \otimes f_1} - w_{f_1 \otimes f_1} = 0, \label{Wave cone condition 1} \\
d_{f_1 \otimes f_2} &= \abs{\alpha-\beta} (a_2-c_2) + c (b_1 + d_1) = 0, \label{Wave cone condition 2} \\
d_{f_3 \otimes f_1} &= \abs{\alpha-\beta} (b_3-d_3) - d(a_1 + c_1) = 0, \label{Wave cone condition 3} \\
d \cdot d_{f_2 \otimes f_1} &= cd(a_1 + c_1) + \abs{\alpha-\beta} d(b_2 - d_2) + 2 cd \langle \beta, f_1 \rangle \label{Wave cone condition 4} \\
&= c \cdot \abs{\alpha-\beta}(a_3-c_3+2d) -cd (b_1 + d_1) + 2 cd \langle \beta, f_1 \rangle = c \cdot d_{f_1 \otimes f_3} \label{Wave cone condition 5}
\end{align}
(recall that the relative lengths of the vectors $2c f_2$ and $2d f_3$ matter in condition \eqref{Wave cone condition 4}--\eqref{Wave cone condition 5}).

On the other hand, the sought equation $(\alpha,\beta,\alpha \otimes \beta + N' + \Pi I) = (V+W)/2$ can be written as
\[\begin{array}{lcl}
& & 2 \sum_{i,j=1}^3 c_{ij} f_i \otimes f_j + 2 (c_{12}' - c_{12}) f_1 \otimes f_2 + 2 (c_{13}'-c_{13}) f_1 \otimes f_3 \\
&=& [c (a_3-c_3) + d (d_2-b_2) + 2 cd] f_2 \otimes f_3 \\
&+& [d(c_2-a_2) + c(b_3-d_3)] f_3 \otimes f_2 \\
&+& [c (b_1-d_1) + \abs{\alpha-\beta} (a_2 + c_2)] f_1 \otimes f_2 \\
&+& [c (a_1-c_1) + \abs{\alpha-\beta} (b_2 + d_2)] f_2 \otimes f_1 \\
&+& [- d (b_1-d_1) + \abs{\alpha-\beta} (a_3 + c_3)] f_1 \otimes f_3 \\
&+& [- d (a_1-c_1) + \abs{\alpha-\beta} (b_3 + d_3)] f_3 \otimes f_1 \\
&+& [\abs{\alpha-\beta} (a_1 + b_1 + c_1 + d_1) + v_{f_1 \otimes f_1} + w_{f_1 \otimes f_1}] f_1 \otimes f_1 \\
&+& [c (a_2 + b_2 - c_2 - d_2) + v_{f_2 \otimes f_2} + w_{f_2 \otimes f_2}] f_2 \otimes f_2 \\
&+& [d (-a_3 - b_3 + c_3 + d_3) + v_{f_3 \otimes f_3} + w_{f_3 \otimes f_3}] f_3 \otimes f_3.
  \end{array}\]
Despite conditions \eqref{Wave cone condition 1}--\eqref{Wave cone condition 5} there is some freedom in the choice of the coefficients. We choose
\begin{align*}
V
&= (\alpha + c f_2, \beta + d f_3, (\alpha + c f_2) \otimes (\beta + d f_3) \\
&+ (c_{11} -q \abs{\alpha-\beta} - 2 p \abs{\alpha-\beta}^2) f_1 \otimes f_1 \\
&+ (c_{22} + c^2 p) f_2 \otimes f_2 + (c_{33} + (3 p - 1) d^2) f_3 \otimes f_3 \\
&+ (\abs{\alpha-\beta} f_1 + c f_2 - d f_3) \otimes (p \abs{\alpha-\beta} f_1 - c p f_2 + d (2 p - 1) f_3) \\
&+ ((p \abs{\alpha-\beta} + q) f_1 + d p f_3) \otimes (\abs{\alpha-\beta} f_1 + c f_2 - d f_3) + \Pi I) \\
&\in K^{7,\Lambda}_{r,s},
\end{align*}
\begin{align*}
W
&= (\alpha - c f_2, \beta - d f_3, (\alpha - c f_2) \otimes (\beta - d f_3) \\
&+ (c_{11} + q \abs{\alpha-\beta} - 2 p \abs{\alpha-\beta}^2) f_1 \otimes f_1 \\
&+ (c_{22} + c^2 p) f_2 \otimes f_2 + (c_{33} + (3 p - 1) d^2) f_3 \otimes f_3 \\
&+ (\abs{\alpha-\beta} f_1 - c f_2 + d f_3) \otimes (p \abs{\alpha-\beta} f_1 + c p f_2 - d (2 p - 1) f_3) \\
&+ ((p \abs{\alpha-\beta}-q) f_1 - d p f_3) \otimes (\abs{\alpha-\beta} f_1 - c f_2 + d f_3) + \Pi I)  \\
&\in K^{7,\Lambda}_{r,s},
\end{align*}
where the coefficients $c,d,p,q \in \R$ will be specified at the end of the proof. The wave cone conditions \eqref{Wave cone condition 1}--\eqref{Wave cone condition 5} are satisfied and thus
\begin{align*}
   \frac{V+W}{2}
&= (\alpha, \beta, \alpha \otimes \beta + c_{11} f_1 \otimes f_1 + c q f_1 \otimes f_2 - d q f_1 \otimes f_3 + c_{22} f_2 \otimes f_2 \\
&+ 2 p cd (f_2 \otimes f_3 + f_3 \otimes f_2) + c_{33} f_3 \otimes f_3 + \Pi I) \\
&\eqdef (\alpha,\beta,\alpha \otimes \beta + N' + \Pi I) \in K^{8,\Lambda}_{r,s}.
\end{align*}
We repeat the argument with $c$ and $d$ replaced by $-c/2$ and $-2d$ to form
\[N^{\prime \prime} = c_{11} f_1 \otimes f_1 - \frac{c}{2} q f_1 \otimes f_2 + 2 d q f_1 \otimes f_3 + c_{22} f_2 \otimes f_2
+ 2 p c d (f_2 \otimes f_3 + f_3 \otimes f_2) + c_{33} f_3 \otimes f_3,\]
and then $(0,0,N' - N^{\prime \prime}) = (0,0,(3cq/2) f_1 \otimes f_2 - 3 d q f_1 \otimes f_3) \in \Lambda$ implies
\[\begin{array}{lcl}
& & \displaystyle \frac{2}{3} (\alpha, \beta, \alpha \otimes \beta + N' + \Pi I)
+ \frac{1}{3} (\alpha, \beta, \alpha \otimes \beta + N^{\prime \prime} + \Pi I) \\
&=& \displaystyle (\alpha, \beta, \alpha \otimes \beta + c_{11} f_1 \otimes f_1 + \frac{c}{2} q f_1 \otimes f_2 + c_{22} f_2 \otimes f_2 \\
&+& 2 p cd (f_2 \otimes f_3 + f_3 \otimes f_2) + c_{33} f_3 \otimes f_3 + \Pi I) \in K^{9,\Lambda}_{r,s}.
  \end{array}\]

We are now ready to specify all the parameters. Set $c_{\tau,r,s}^{\prime \prime \prime} \defeq (c_{\tau,r,s}^{\prime \prime})^4/1000(r+s+1)^8$ and denote $c_{12} = k c_{\tau,r,s}^{\prime \prime \prime}$ with $\abs{k} \le 1$ and $c_{23} = l c_{\tau,r,s}^{\prime \prime \prime}$ with $\abs{l} \le 1$. Choose $c = (c_{\tau,r,s}^{\prime \prime \prime})^{1/2}$, $d = (c_{\tau,r,s}^{\prime \prime \prime})^{1/4}$, $p = l(c_{\tau,r,s}^{\prime \prime \prime})^{1/4}/2$ and $q = 2 k (c_{\tau,r,s}^{\prime \prime \prime})^{1/2}$, so that $c q/2 = c_{12}$, $2 p cd = c_{23}$. This proves the case $c_{13} = c_{21} = c_{31} = 0$. When forming $N^{\prime \prime} \in \R^{3 \times 3}$, if we replace $c$ and $d$ by $-2c$ and $-d/2$ (instead of $-c/2$ and $-2d$ as above) and modifying $p$ and $q$ accordingly, we get the case $c_{13} \neq 0$. The cases $c_{21} \neq 0$ and $c_{31} \neq 0$ are similar by symmetry.
\end{proof}

We finish the proof of Lemma \ref{Hull lemma} by reducing the general case $N = \sum_{i,j=1}^3 c_{ij} f_i \otimes f_j$ with $c_{23} = c_{32}$ to the situation of Lemma \ref{Three non-vanishing terms}.

\begin{proof}[Completion of the proof of Lemma \ref{Hull lemma}]
Suppose $(\alpha,\beta,\alpha \otimes \beta + \sum_{i,j=1}^3 c_{ij} f_i \otimes f_j + \Pi I)$ satisfies the assumptions of Lemma \ref{Hull lemma}, where $c_{\tau,r,s} \defeq c_{\tau,r,s}^{\prime \prime \prime}/8$. We write
\[\sum_{i,j=1}^3 c_{ij} f_i \otimes f_j \eqdef N' + c_{12} f_1 \otimes f_2 + c_{13} f_1 \otimes f_3 + c_{21} f_2 \otimes f_1 + c_{31} f_3 \otimes f_1\]
with the intention of using Lemma \ref{Three non-vanishing terms} and convex combinations in directions of the form $(0,0,M) \in \Lambda$ to prove that $(\alpha,\beta,\alpha \otimes \beta + \sum_{i,j=1}^3 c_{ij} f_i \otimes f_j + \Pi I) \in K^{12,\Lambda}_{r,s}$. We first form a $\Lambda$-convex combination via
\[\begin{array}{lcl}
& & c_{12} f_1 \otimes f_2 + c_{13} f_1 \otimes f_3 + c_{21} f_2 \otimes f_1 + c_{31} f_3 \otimes f_1 \\
&=& \displaystyle \frac{1}{2} (2 c_{12} f_1 \otimes f_2 + c_{21} f_2 \otimes f_1 + c_{31} f_3 \otimes f_1) \\
&+& \displaystyle \frac{1}{2} (2 c_{13} f_1 \otimes f_2 + c_{21} f_2 \otimes f_1 + c_{31} f_3 \otimes f_1) \\
&=& \displaystyle \frac{1}{2} \left( \frac{1}{2} (4 c_{12} f_1 \otimes f_2 + c_{31} f_3 \otimes f_1) + \frac{1}{2} (2 c_{21} f_2 \otimes f_1 + c_{31} f_3 \otimes f_1) \right) \\
&+& \displaystyle \frac{1}{2} \left( \frac{1}{2} (4 c_{13} f_1 \otimes f_3 + c_{21} f_2 \otimes f_1) + \frac{1}{2} (c_{21} f_2 \otimes f_1 + 2 c_{31} f_3 \otimes f_1) \right).
\end{array}\]
Note that $(\alpha,\beta, \alpha \otimes \beta + N' + 2 c_{21} f_2 \otimes f_1 + c_{31} f_3 \otimes f_1 + \Pi I)$ is, furthermore, a $\Lambda$-convex combination of $(\alpha,\beta, \alpha \otimes \beta + N' + 4 c_{21} f_2 \otimes f_1 + \Pi I) \in K^{9,\Lambda}_{r,s}$ and $(\alpha,\beta, \alpha \otimes \beta + N' + 2 c_{31} f_3 \otimes f_1 + \Pi I) \in K^{9,\Lambda}_{r,s}$, and a similar argument works on $c_{21} f_2 \otimes f_1 + 2 c_{31} f_3 \otimes f_1$.

For the matrix $4 c_{12} f_1 \otimes f_2 + c_{31} f_3 \otimes f_1$ we get
\[(\alpha,\beta,\alpha \otimes \beta + N' + 4 c_{12} f_1 \otimes f_2 + c_{31} f_3 \otimes f_1 + \Pi I)
= \frac{V+W}{2} \in K^{10,\Lambda}_{r,s}\]
where, for a small enough  $d \in \mathbb{R} \setminus \{0\}$, we have
\begin{align*}
V &\defeq (\alpha,\beta + d f_1,\alpha \otimes (\beta + d f_1) + N' + 8 c_{12} f_1 \otimes f_2 + \Pi I) \in K^{9,\Lambda}_{r,s}, \\
W &\defeq (\alpha,\beta - d f_1,\alpha \otimes (\beta - d f_1) + N' + 2 c_{31} f_3 \otimes f_1 + \Pi I) \in K^{9,\Lambda}_{r,s}
\end{align*}
by Lemma \ref{Three non-vanishing terms} and $V-W \in \Lambda$ by Lemma \ref{Wave cone proposition 3}. Analogous reasoning shows that $(\alpha,\beta,\alpha \otimes \beta + N' + 4 c_{13} f_1 \otimes f_3 + c_{21} f_2 \otimes f_1 + \Pi I) \in K^{10,\Lambda}_{r,s}$, and as a result, $(\alpha,\beta,\alpha \otimes \beta + \sum_{i,j=1}^3 c_{ij} f_i \otimes f_j + \Pi I) \in K^{12,\Lambda}_{r,s}$.
\end{proof}

We have now proved Lemma \ref{Hull lemma} and, thereby, Theorems \ref{Non-empty relative interior theorem}, \ref{Subsolution theorem} and \ref{Theorem on nonempty relative interior}. We finish the article by making some brief remarks on the convex integration solutions of \cite{BLFNL}.

\subsection{Discussion of the solutions of Bronzi \& al.} \label{Discussion of the solutions of Bronzi & al.}

As mentioned in the Introduction, Bronzi, Lopes Filho and Nussenzweig Lopes showed in \cite{BLFNL} the existence of bounded weak solutions of 3D MHD that are compactly supported in time. In this subsection we briefly discuss their construction.

Bronzi \& al$.$ studied two-dimensional incompressible Euler equations with a passive tracer,
\begin{align}
& \partial_t v + \dive (v \otimes v) + \nabla p = 0, \label{Passive tracer 1} \\
& \partial_t b + \dive (b v) = 0, \label{Passive tracer 2} \\
& \dive v = 0, \label{Passive tracer 3}
\end{align}
where $v \colon \R^2 \times \R \to \R^2$ is the velocity field, $b \colon \R^2 \times \R \to \R$ is the tracer and $p \colon \R^2 \times \R \to \R$ is the pressure. Their main result, proved by adapting arguments of \cite{DLS09} on Euler equations, reads as follows:

\begin{thm}[\cite{BLFNL}] \label{Theorem of the Brazilians}
Given a bounded domain $\Omega \subset \R^2 \times \R$, there exists a weak solution $(v,b) \in L^\infty(\R^2 \times \R; \R^2 \times \R)$ of \eqref{Passive tracer 1}--\eqref{Passive tracer 3} with the following properties:
\renewcommand{\labelenumi}{(\roman{enumi})}
\begin{enumerate}
\item $\abs{v(x,t)} = 1$ and $\abs{b(x,t)} = 1$ for almost every $(x,t) \in \Omega$,

\item $v(x,t) = 0$, $b(x,t) = 0$ and $p(x,t) = 0$ for almost every $(x,t) \in (\R^2 \times \R) \setminus \Omega$.
\end{enumerate}

\end{thm}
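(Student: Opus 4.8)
The plan is to run the De Lellis--Sz\'ekelyhidi convex integration scheme for the Euler equations (\cite{DLS09}) in the Tartar framework, enlarging the state space so that the passive tracer $b$ is carried along by the oscillations of $v$.

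\textbf{Step 1: linearization.} Write $S$ for the trace-free part of $v\otimes v$, so that $S = v\otimes v - \tfrac12\abs{v}^2 I$, and introduce the flux $m \defeq bv \in \R^2$. Then \eqref{Passive tracer 1}--\eqref{Passive tracer 3} is equivalent to the \emph{linear} system
\[\dive v = 0,\qquad \partial_t v + \dive S + \nabla q = 0,\qquad \partial_t b + \dive m = 0\]
(with $q$ a free scalar, from which the original pressure is recovered a posteriori) together with the pointwise relations $S = v\otimes v - \tfrac12\abs{v}^2 I$ and $m = bv$. Since we want $\abs{v}=\abs{b}=1$ on $\Omega$, the relevant constraint set is
\[K \defeq \{(v,b,S,m) : \abs{v}=1,\ \abs{b}=1,\ S = v\otimes v - \tfrac12 I,\ m = bv\},\]
and a solution of \eqref{Passive tracer 1}--\eqref{Passive tracer 3} with the desired support is precisely a solution of the linear system taking values in $K$ a.e.\ on $\Omega$ and vanishing off $\Omega$.

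\textbf{Step 2: wave cone and hull.} A plane wave $(x,t)\mapsto h((x,t)\cdot(\xi,c))(\bar v,\bar b,\bar S,\bar q,\bar m)$ solves the linear system exactly when
\[\bar v\cdot\xi = 0,\qquad c\bar v + \bar S\xi + \bar q\,\xi = 0,\qquad c\bar b + \bar m\cdot\xi = 0,\]
so the wave cone $\Lambda$ is large: for a unit $\xi$ and any $\bar v\perp\xi$ one solves for $\bar S$ and $\bar q$, while $\bar b,\bar m$ satisfy only the single scalar relation $c\bar b + \bar m\cdot\xi = 0$ and are otherwise free. Because no equality constraint ties the relaxed variables together (in contrast to the constraint $a\cdot b=0$ of 3D MHD), the lamination convex hull $K^{lc,\Lambda}$ has nonempty interior; one checks, exactly as for 2D Euler but simultaneously relaxing the scalar $b$, that $K^{lc,\Lambda}$ contains an open neighbourhood $\mathcal{U}$ of the set of states with $\abs{v}<1$, $\abs{b}<1$ and $S$, $m$ suitably close to $v\otimes v - \tfrac12 I$ and $bv$. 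Moreover, since $\abs{v}^2$ and $\abs{b}^2$ are convex, hence $\Lambda$-convex, one has $\abs{v}\le 1$ and $\abs{b}\le 1$ throughout $K^\Lambda$, with equality only on $K$.

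\textbf{Step 3: subsolutions and Baire category.} Call $(v,b,S,m)$ a \emph{subsolution} if it solves the linear system, is supported in $\Omega$, and takes values in $\mathcal{U}$ a.e.; the zero map is one. Metrize the set $X_0$ of subsolutions with values in a fixed bounded set by a metric inducing the weak-$*$ $L^\infty$ topology, and let $X$ be its completion. On $X$ consider $\mathcal{I}(v,b,S,m) \defeq \int_\Omega (\abs{v}^2 + \abs{b}^2)$, which by Step 2 satisfies $\mathcal{I}\le 2\abs{\Omega}$, with equality only if $(v,b,S,m)\in K$ a.e.\ on $\Omega$. The key perturbation lemma: any subsolution with $\mathcal{I} < 2\abs{\Omega}$ can be modified by adding a highly oscillatory, $\Omega$-localized perturbation built from finitely many plane waves with amplitudes in $\Lambda$ — staying a subsolution — so as to increase $\mathcal{I}$ by at least a fixed increasing function of the deficit $2\abs{\Omega}-\mathcal{I}$; this uses the openness of $\mathcal{U}$ together with the explicit potential operators of \cite{DLS09} to realize the oscillations exactly. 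Hence $\mathcal{I}$ is a Baire-one functional on the complete metric space $X$, so its points of continuity form a residual set, and at each such point necessarily $\mathcal{I} = 2\abs{\Omega}$, i.e.\ $\abs{v} = \abs{b} = 1$ a.e.\ on $\Omega$. Since $K$ is exactly the set of points of $K^\Lambda$ with $\abs{v}=\abs{b}=1$, every such element of $X$ lies in $K$ a.e.\ on $\Omega$ and vanishes off $\Omega$, hence is a weak solution of \eqref{Passive tracer 1}--\eqref{Passive tracer 3} with properties (i)--(ii).

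\textbf{Main obstacle.} The technical heart is the quantitative, localized perturbation lemma of Step 3: one must add oscillations that increase $\int(\abs{v}^2+\abs{b}^2)$ by a controlled amount while keeping $v$, $S$, $q$, $b$, $m$ inside $\mathcal{U}$ and the support inside $\Omega$, now with the extra scalar unknown $b$ present and coupled through $m=bv$. Verifying in Step 2 that $\mathcal{U}$ is genuinely open — so that $b$ does not over-constrain the hull — is comparatively routine once the wave cone is computed, precisely because no nonlinear equality constraint links the relaxed variables.
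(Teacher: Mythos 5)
This theorem is not proved in the paper at all: it is quoted verbatim from \cite{BLFNL}, and the paper only records that it was ``proved by adapting arguments of \cite{DLS09} on Euler equations,'' which is exactly the strategy you outline (linearize with $S$ and $m=bv$, compute the wave cone, check the hull is genuinely open since no $\Lambda$-affine constraint analogous to $a\cdot b=0$ survives, then run the Baire category/perturbation argument). Your sketch is consistent with that route and with the paper's surrounding discussion in \textsection\ref{Discussion of the solutions of Bronzi & al.}; the only caveat is that, as you acknowledge, the substantive content (the openness of $\mathcal{U}$, the rigidity statement that $\abs{v}=\abs{b}=1$ on $K^\Lambda$ forces membership in $K$, and the quantitative localized perturbation lemma) is asserted rather than carried out, so this is a correct strategy outline rather than a complete proof.
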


Bronzi \& al$.$ obtained bounded weak solutions of 3D MHD from Theorem \ref{Theorem of the Brazilians} as follows: when $u,b \colon \R^3 \times \R \to \R^3$ are of the symmetry reduced forms
\begin{equation} \label{Symmetry restrictions}
u(x_1,x_2,x_3,t) \defeq (u_1(x_1,x_2,t), u_2(x_1,x_2,t),0), \; b(x_1,x_2,x_3,t) = (0,0,b_3(x_1,x_2,t)),
\end{equation}
the 3D MHD equations \eqref{MHD}--\eqref{MHD3} are reduced to \eqref{Passive tracer 1}--\eqref{Passive tracer 3}, and so Theorem \ref{Theorem of the Brazilians} yields solutions of \eqref{MHD}--\eqref{MHD3}. However, as mentioned in the Introduction, solutions $u$ and $b$ obtained in this way are not compactly supported in space.

The main obstacle to finding compactly supported solutions of 3D MHD is the non-linear constraint $a \cdot b = 0$. By using Lemma \ref{Lemma on 1D subsolutions} one may show that a single localized 1D wave can take values in $\mathcal{U}_{r,s}$, but the localization makes it difficult to superimpose waves while satisfying the condition $a \cdot b = 0$. In \cite{BLFNL} this issue did not arise simply because Bronzi \& al$.$ imposed the symmetry restrictions \eqref{Symmetry restrictions} which lead (in this article's notation) to $a$ and $b$ taking values in orthogonal subspaces of $\R^3$. It seems that going beyond the symmetry reduced solutions of \cite{BLFNL} and attaining compact support in space will require very careful analysis.

Note also that the symmetry restrictions \eqref{Symmetry restrictions} ensure that the cross helicity $\int_{\R^3} u(x,t) \cdot b(x,t)$ vanishes identically for the solutions of \cite{BLFNL}. It is therefore still an open question whether cross helicity is conserved by weak solutions of 3D MHD.

\bigskip
\footnotesize
\noindent\textit{Acknowledgments.}
The authors thank Diego C\'{o}rdoba for suggesting the problem of studying convex integration in MHD. We also thank \'{A}ngel Castro, Sara Daneri and L\'{a}szl\'{o} Sz\'{e}kelyhidi Jr. for discussions on the article.


\begin{thebibliography}{SK}




\normalsize
\baselineskip=17pt


\bibitem[BLFNL]{BLFNL}
Bronzi, A. C., Lopes Filho, M. C., Nussenzveig Lopes, H. J.:
Wild solutions for 2{D} incompressible ideal flow with passive tracer.
Commun. Math. Sci. \textbf{13}, 1333--1343 (2015)

\bibitem[Buc]{Buckmaster}
Buckmaster, T.:
Onsager's Conjecture Almost Everywhere in Time.
Comm. Math. Phys. \textbf{33}, no.~3, 1175--1198 (2015)

\bibitem[BDLIS]{BDLIS15}
Buckmaster, T., De Lellis, C., Isett, P., Sz\'{e}kelyhidi Jr., L.:
Anomalous dissipation for 1/5-H\"{o}lder Euler flows.
Ann. of Math. (2) \textbf{182}, no.~1, 127--172 (2015)

\bibitem[BDLS13]{BDLS13}
Buckmaster, T., De Lellis, C., Sz\'{e}kelyhidi Jr., L.:
Transporting microstructure and dissipative Euler flows.
arXiv:1302.2815

\bibitem[BDLS16]{BDLS16}
Buckmaster, T., De Lellis, C., Sz\'{e}kelyhidi Jr., L.:
Dissipative Euler flows with Onsager-critical spatial regularity.
Comm. Pure Appl. Math. \textbf{69}, no.~9, 1613--1670 (2016)

\bibitem[BDLSV]{BDLSV}
Buckmaster, T., De Lellis, C., Sz\'{e}kelyhidi Jr., L., Vicol, V.:
Onsager's conjecture for admissible weak solutions.
arXiv:1701.08678

\bibitem[BSV]{BSV}
Buckmaster, T., Schkoller, S., Vicol, V.:
Nonuniqueness of weak solutions to the SQG equation.
arXiv:1610.00676

\bibitem[BV]{BV}
Buckmaster, T., Vicol, V.:
Nonuniqueness of weak solutions to the Navier-Stokes equation.
arXiv:1709.10033

\bibitem[CKS]{CKS}
Caflisch, R.E., Klapper, I., Steele, G.:
Remarks on singularities, dimension and energy dissipation for ideal hydrodynamics and MHD.
Comm. Math. Phys. \textbf{184}, no.~2, 443--455 (1997) 

\bibitem[CCF]{CCF}
Castro, A., C\'{o}rdoba, D., Faraco, D.:
Mixing solutions for the Muskat problem.
arXiv:1605.04822v1

\bibitem[Chi]{Chiodaroli}
Chiodaroli, E.:
A counterexample to well-posedness of entropy solutions to the compressible Euler system.
J. Hyperbolic Differ. Equ. \textbf{11}, no.~3, 493--519 (2014) 

\bibitem[CDLK]{CDLK}
Chiodaroli, E., De Lellis, C., Kreml, O.:
Global ill-posedness of the isentropic system of gas dynamics.
Comm. Pure Appl. Math. \textbf{68}, no.~7, 1157--1190 (2015)

\bibitem[CFK]{CFK}
Chiodaroli, E., Feireisl, E., Kreml, O.:
On the weak solutions to the equations of a compressible heat conducting gas.
Ann. Inst. H. Poincar\'{e} Anal. Non Lin\'{e}aire \textbf{32}, no.~1, 225--243 (2015) 

\bibitem[CM]{CM}
Chiodaroli, E., Mich\'{a}lek, M.:
Existence and non-uniqueness of global weak solutions to inviscid primitive and Boussinesq equations.
Comm. Math. Phys. \textbf{353}, no.~3, 1201--1216 (2017)

\bibitem[CS]{CS}
Choffrut, A., Sz\'{e}kelyhidi, L., Jr.:
Weak solutions to the stationary incompressible Euler equations.
SIAM J. Math. Anal. \textbf{46}, no.~6, 4060--4074 (2014) 

\bibitem[CLMS]{CLMS}
Coifman, R., Lions, P.-L., Meyer, Y., Semmes, S.:
Compensated compactness and Hardy spaces.
J. Math. Pures Appl. (9) \textbf{72}, no.~3, 247--286 (1993) 

\bibitem[CET]{CET}
Constantin, P., E, W., Titi, E.S.:
Onsager's conjecture on the energy conservation for solutions of Euler's equation.
Comm. Math. Phys. \textbf{165}, no.~1, 207-209 (1994)

\bibitem[CV]{CV}
Constantin, P., Vicol, V.:
Remarks on high Reynolds numbers hydrodynamics and the inviscid limit.
arXiv:1708.03225

\bibitem[CFG]{CFG}
C\'{o}rdoba, D., Faraco, D., Gancedo, F.:
Lack of uniqueness for weak solutions of the incompressible porous media equation.
Arch. Ration. Mech. Anal. \textbf{200}, no.~3, 725--746 (2011)

\bibitem[DS]{DS}
Daneri, S., Sz\'{e}kelyhidi Jr., L.:
Non-uniqueness and h-principle for H\"{o}lder-continuous weak solutions of the Euler equations.
Arch. Ration. Mech. Anal. \textbf{224}, no.~2, 471--514 (2017)

\bibitem[Dan]{Dan}
Daneri, S.:
Cauchy problem for dissipative H\"{o}lder solutions to the incompressible Euler equations.
Comm. Math. Phys. \textbf{329}, no.~2, 745--786 (2014)

\bibitem[Dav]{Davidson}
Davidson, P. A.:
An introduction to magnetohydrodynamics.
Cambridge University Press, Cambridge, 2001.

\bibitem[DLS09]{DLS09}
De Lellis, C., Sz\'{e}kelyhidi Jr., L.:
The Euler equations as a differential inclusion.
Ann. of Math. (2) \textbf{170}, no.~3, 1417--1436 (2009)

\bibitem[DLS10]{DLS10}
De Lellis, C., Sz\'{e}kelyhidi Jr., L.:
On admissibility criteria for weak solutions of the Euler equations.
Arch. Ration. Mech. Anal. \textbf{195}, no.~1, 225--260 (2010) 

\bibitem[DLS12]{DLS12}
De Lellis, C., Sz\'{e}kelyhidi Jr., L.:
The h-principle and the equations of fluid dynamics.
Bull. Amer. Math. Soc. (N.S.) \textbf{49}, no.~3, 347--375 (2012) 

\bibitem[DLS13]{DLS13}
De Lellis, C., Sz\'{e}kelyhidi Jr., L.:
Dissipative continuous Euler flows.
Invent. Math. \textbf{193}, no.~2, 377--407 (2013)

\bibitem[DLS14]{DLS14}
De Lellis, C., Sz\'{e}kelyhidi Jr., L.:
Dissipative Euler flows and Onsager's conjecture.
J. Eur. Math. Soc. \textbf{16}, no.~7, 1467--1505 (2014)

\bibitem[DLS16]{DLS16}
De Lellis, C., Sz\'{e}kelyhidi Jr., L.:
High dimensionality and h-principle in PDE.
Bull. Amer. Math. Soc. \textbf{54}, no.~2, 247--282 (2017)

\bibitem[Eyi]{Eyi}
Eyink, G.L.:
Dissipative anomalies in singular Euler flows.
Phys. D \textbf{237}, no.~14--17, 1956--1968 (2008)

\bibitem[FS]{FS}
Fefferman, C., Stein, E.:
$H^p$ spaces of several variables.
Acta Math. \textbf{129}, no.~3--4, 137--193 (1972)

\bibitem[FMS]{FMS}
Freire, A., M\"uller, S., Struwe, M.:
Weak convergence of wave maps from {$(1+2)$}-dimensional
              {M}inkowski space to {R}iemannian manifolds.
Invent. Math. \textbf{130}, 589--617 (1997)

\bibitem[Gal]{Gal}
Galdi, G.P.:
An introduction to the Navier-Stokes initial-boundary value problem.
Fundamental directions in mathematical fluid mechanics, 1--70,
Adv. Math. Fluid Mech., Birkhäuser, Basel (2000)

\bibitem[HVNVW]{HVNVW}
Hyt\"{o}nen, T., van Neerven, J., Veraar, M., Weis, L.:
Analysis in Banach spaces. Vol. I. Martingales and Littlewood-Paley theory.
Ergebnisse der Mathematik und ihrer Grenzgebiete. 3. Folge. A Series of Modern Surveys in Mathematics [Results in Mathematics and Related Areas. 3rd Series. A Series of Modern Surveys in Mathematics], 63. Springer, Cham (2016)

\bibitem[Ise13]{Ise13}
Isett, P.:
Holder continuous Euler flows with compact support in time. ProQuest LLC, Ann Arbor, MI, 2013. Thesis (Ph.D.)--Princeton University.

\bibitem[Ise16]{Ise16}
Isett, P.:
A Proof of Onsager's Conjecture.
arXiv:1608.08301.

\bibitem[Ise17]{Ise17}
Isett, P.:
Nonuniqueness and existence of continuous, globally dissipative Euler flows.
arXiv:1710.11186

\bibitem[IO]{IO16}
Isett, P., Oh, S.-J.:
On nonperiodic Euler flows with H\"{o}lder regularity.
Arch. Ration. Mech. Anal. \textbf{221}, no.~2, 725--804 (2016)

\bibitem[IV]{IV}
Isett, P., Vicol, V.:
H\"{o}lder continuous solutions of active scalar equations.
Ann. PDE \textbf{1}, no.~1, 77 pp (2015)

\bibitem[KL]{KL}
Kang, E., Lee, J.:
Remarks on the magnetic helicity and energy conservation for ideal magneto-hydrodynamics.
Nonlinearity \textbf{20}, no.~11, 2681--2689 (2007) 

\bibitem[KY]{KY}
Kim, S., Yan, B.:
Convex integration and infinitely many weak solutions to the Perona-Malik equation in all dimensions.
SIAM J. Math. Anal. \textbf{47}, no.~4, 2770--2794 (2015)

\bibitem[Kir]{Kir}
Kirchheim, B.:
Rigidity and geometry of microstructures.
Habilitation thesis, Universit\"{a}t Leipzig (2003)

\bibitem[LLX]{LLX}
Luo, T., Xie, C., Xin, Z.:
Non-uniqueness of admissible weak solutions to compressible Euler systems with source terms.
Adv. Math. \textbf{291}, 542--583 (2016)

\bibitem[MS99]{MS99}
M\"{u}ller, S., \v{S}ver\'{a}k, V.:
Convex integration with constraints and applications
to phase transitions and partial differential equations,
J. Eur. Math. Soc. \textbf{1}, 393--422 (1999)

\bibitem[MS03]{MS}
M\"{u}ller, S., \v{S}ver\'{a}k, V.:
Convex integration for Lipschitz mappings and counterexamples to regularity.
Ann. of Math. (2) \textbf{157}, no.~3, 715--742 (2003)

\bibitem[Ons]{Onsager}
Onsager, L.:
Statistical hydrodynamics.
Nuovo Cimento (9) \textbf{6}. Supplemento, no. 2 (Convegno Internazionale di Meccanica Statistica), 279--287 (1949)

\bibitem[RRS]{RRS}
Robinson, J.C., Rodrigo, J.L., Sadowski, W.:
The three-dimensional Navier-Stokes equations. Classical theory.
Cambridge University Press, Cambridge (2016)

\bibitem[Rou]{Rou}
Roub\'{i}\v{c}ek, T.: 
Nonlinear partial differential equations with applications.
Second edition. International Series of Numerical Mathematics, 153. Birkh\"{a}user/Springer Basel AG, Basel (2013)

\bibitem[Sch]{Scheffer}
Scheffer, V.:
An inviscid flow with compact support in space-time.
J. Geom. Anal. \textbf{3}, no.~4, 343--401 (1993)

\bibitem[ST]{ST}
Sermange, M., Temam, R.:
Some mathematical questions related to the MHD equations.
Comm. Pure Appl. Math. \textbf{36}, no.~5, 635--664 (1983)

\bibitem[Shn]{Shnirelman}
Shnirelman, A.:
On the nonuniqueness of weak solution of the Euler equation.
Comm. Pure Appl. Math. \textbf{50}, no.~12, 1261--1286 (1997) 

\bibitem[Shv1]{Shvydkoy Onsager}
Shvydkoy, R.:
Lectures on the Onsager conjecture.
Discrete Contin. Dyn. Syst. Ser. S \textbf{3}, no. 3, 473--496 (2010)

\bibitem[Shv2]{Shvydkoy}
Shvydkoy, R.:
Convex integration for a class of active scalar equations.
J. Amer. Math. Soc. \textbf{24}, no.~4, 1159--1174 (2011) 

\bibitem[Sze]{Szekelyhidi}
Sz\'{e}kelyhidi Jr., L.:
Relaxation of the incompressible porous media equation.
Ann. Sci. \'{E}c. Norm. Sup'{e}r. (4) \textbf{45}, no.~3, 491--509 (2012)

\bibitem[Tar79]{Tartar79}
Tartar, L.:
Compensated compactness and applications to partial differential equations.
Nonlinear Analysis and Mechanics: Heriot-Watt Symposium, Vol. IV. Vol. 39 of
Res. Notes in Math. Pitman, Boston, 136--212 (1979)

\bibitem[Tar83]{Tartar83}
Tartar, L.:
The compensated compactness method applied to systems of conservation
laws.
Systems of Nonlinear Partial Differential Equations (Oxford, 1982). Vol. 111 of
NATO Adv. Sci. Inst. Ser. C Math. Phys. Sci. Reidel, Dordrecht, 263--285 (1983)

\bibitem[TZ]{TZ}
Tao, T., Zhang, L.:
H\"{o}lder Continuous Solutions Of Boussinesq Equation with compact support.
arXiv:1512.06576.

\bibitem[VN]{VN}
von Neumann, J.:
Collected works. Vol. VI: Theory of games, astrophysics, hydrodynamics and meteorology. General editor: A. H. Taub. Pergamon Press, New York (1963)

\bibitem[Yu]{Yu}
Yu, X.:
A note on the energy conservation of the ideal MHD equations.
Nonlinearity \textbf{22}, no.~4, 913--922 (2009)
\end{thebibliography}
\end{document}